\numberwithin{equation}{section}
\theoremstyle{plain}
\newtheorem{Proposition}[equation]{Proposition}
\newtheorem{Corollary}[equation]{Corollary}
\newtheorem{Theorem}[equation]{Theorem}
\theoremstyle{definition}
\newtheorem{Definition}[equation]{Definition}
\newtheorem{Example}[equation]{Example}
\newtheorem{Remark}[equation]{Remark}
\def\C{\mathbb{C}}
\def\R{\mathbb{R}}
\def\D{\mathbb{D}}
\def\N{\mathbb{N}}
\def\T{\mathbb{T}}
\def\K{\mathcal{K}}
\def\H{\mathcal{H}}
\def\HH{\mathscr{H}}
\def\VV{\mathscr{V}}
\def\SS{\mathscr{S}}
\def\ZZZ{\mathfrak{Z}}
\newcommand{\ran}[1]{\ensuremath{\operatorname{Ran} ( {#1} ) }}
\renewcommand{\ker}[1]{\ensuremath{\operatorname{Ker} ({#1}) }}
\newcommand{\dom}[1]{\ensuremath{\operatorname{Dom} ({#1}) }}
\renewcommand{\dim}[1]{\ensuremath{\operatorname{dim} ( {#1} ) }}
\def\TRng{\widetilde{\operatorname{Ran}}}
\def\Halmos{\precsim}
\def\unitary{\preccurlyeq}
\def\quasi{\preccurlyeq_{q}}
\newcommand{\be}{\begin{equation}}
\newcommand{\ee}{\end{equation}}
\newcommand{\ba}{\begin{eqnarray}}
\newcommand{\ea}{\end{eqnarray}}
\newcommand{\wt}{\ensuremath{\widetilde}}
\renewcommand{\H}{\ensuremath{\mathcal{H} }}
\newcommand{\Z}{\ensuremath{\mathbb{Z} }}
\newcommand{\ip}[2]{\ensuremath{\langle {#1} , {#2} \rangle}}
\renewcommand{\dim}[1]{\ensuremath{\operatorname{dim} \left( {#1} \right) }}
\newcommand{\bn}{\begin{enumerate}}
\newcommand{\en}{\end{enumerate}}
\begin{document}

\bibliographystyle{amsplain}

\title{Partial Orders on Partial Isometries}

 \author{Stephan Ramon Garcia}
   \address{   Department of Mathematics\\
            Pomona College\\
            Claremont, California\\
            91711 \\ USA}
    \email{Stephan.Garcia@pomona.edu}
    \urladdr{\url{http://pages.pomona.edu/~sg064747}}
    \thanks{First author acknowledges support of NSF Grant DMS-1265973. Second author acknowledges support of NRF CPRR Grant 90551.}

\author{Robert T. W. Martin}
\address{Department of Mathematics and Applied Mathematics, University of Cape Town, Cape Town, South Africa}
\email{rtwmartin@gmail.com}

\author[W.T.~Ross]{William T. Ross}
	\address{Department of Mathematics and Computer Science, University of Richmond, Richmond, VA 23173, USA}
	\email{wross@richmond.edu}

\begin{abstract}

    This paper studies three natural pre-orders of increasing generality on the set of all completely non-unitary partial isometries with equal defect indices. We show that the problem of determining when one partial isometry is less than another with respect to these pre-orders is equivalent to the existence of a bounded (or isometric) multiplier between two natural reproducing kernel Hilbert spaces of analytic functions. For large classes of partial isometries these spaces can be realized as the well-known model subspaces and deBranges-Rovnyak spaces. This characterization is applied to investigate properties of these pre-orders and the equivalence classes they generate.

\vspace{5mm}   \noindent {\it Key words and phrases}:
Hardy space, model subspaces, deBranges-Rovnyak spaces, partial isometries, symmetric operators, partial order, pre-order

\vspace{3mm}
\noindent {\it 2010 Mathematics Subject Classification} ---06A06; 47A20; 47A45; 47B25; 47B32; 47E32
\end{abstract}

\maketitle

\section{Introduction}

This paper explores several partial orders on various sets of equivalence classes of partial isometries on Hilbert spaces and their relationship to the function theory problem of when there exists a multiplier from one Hilbert space of analytic functions to another.

More specifically, for $n \in \N \cup \{\infty\}$, we examine the class $\mathscr{V}_{n}(\H)$ of all bounded linear operators $V$ on a complex separable Hilbert space $\H$ satisfying: (i)  $V$ is a partial isometry;
(ii) the defect spaces
$\mathfrak{D}_{+}(V) := \ker{V}$ and $\mathfrak{D}_{-}(V) := \ran{V}^{\perp}$
 have equal dimension $n$;  (iii) there exists no proper reducing subspace $\mathcal{M}$ for $V$  for which $V|_{\mathcal{M}}$ is unitary. An operator satisfying this last condition is said to be {\em completely non-unitary}.
We use the notation $\VV_n$ when considering the set of all $\VV_{n}(\H)$ for any Hilbert space $\H$.

A theorem of Liv\v{s}ic \cite{ MR0113143} settles the unitary equivalence question for $\mathscr{V}_n$. More precisely, to each $V \in \mathscr{V}_n$ there is an associated operator-valued contractive analytic function $w_V$ on $\D$, called the {\em characteristic function} associated with $V$, such that $V_1, V_2 \in \mathscr{V}_n$ are unitarily equivalent if and only if $w_{V_1}$ coincides with $w_{V_2}$. This idea was expanded to contraction operators \cite{MR2760647, MR0244795, MR0215065, MR875237}.

In this paper, we explore three partial orders on $\mathscr{V}_n$ and their possible relationships with  the Liv\v{s}ic characteristic function. After some introductory material, we define three relations $\Halmos, \unitary$, and $\quasi$ on $\mathscr{V}_n$. Each defines a pre-order (reflexive and transitive) and each induces an equivalence relation on $\mathscr{V}_{n}$ by $A \approx B$ if $A \Halmos B$ and $B \Halmos A$ (similarly for the relations $\unitary$ and $\quasi$). In turn, these three equivalence relations generate corresponding equivalence classes $[A]$ of operators in $\mathscr{V}_n$ and induce partial orders on the set of equivalence classes $\mathscr{V}_{n}/\!\approx$.
The first of these partial orders $\Halmos$ was explored by Halmos and McLaughlin \cite{Halmos} and the equivalence classes turn out to be trivial in the sense that $A \Halmos B$ and $B \Halmos A$ if and only if $A = B$. Classifying the equivalence classes induced by $\unitary$ and $\quasi$ is more complicated and requires further discussion.

Our approach to understanding $\unitary$ and $\quasi$ is to recast the problem in terms of the existence of multipliers between spaces of analytic functions. Using ideas from Liv\v{s}ic \cite{ MR0113143} and Kre\u{\i}n \cite{MR0048704} (and explored further by deBranges and Rovnyak in \cite{MR0244795, MR0215065} and by Nikolskii and Vasyunin in \cite{MR875237}), we associate each $V \in \mathscr{V}_n$ with a
Hilbert space $\mathscr{H}_{V}$ of vector-valued analytic functions on $\C \setminus \mathbb{T}$ such that $V|_{\ker{V}^{\perp}}$ is unitarily equivalent to $\ZZZ_{V}$, where $\ZZZ_{V} f = z f$ on
$\dom{\ZZZ_{V}} = \{f \in \mathscr{H}_{V}: z f \in \mathscr{H}_{V}\}$. We show, for $V_1, V_2 \in \mathscr{V}_{n}$, that (i) $V_{1}$ is unitarily equivalent to $V_2$ if and only if there is an isometric multiplier from $\mathscr{H}_{V_1}$ onto $\mathscr{H}_{V_2}$ (more precisely, there exists an operator-valued analytic function $\Phi$ on $\C \setminus \T$ such that $\Phi \mathscr{H}_{V_1} = \mathscr{H}_{V_2}$ and the operator $f \mapsto \Phi f$ from $\mathscr{H}_{V_1}$ to $\mathscr{H}_{V_2}$ is isometric); (ii) $V_{1} \unitary V_2$ if and only if there is an isometric multiplier $\Phi$ from $\mathscr{H}_{V_1}$ into $\mathscr{H}_{V_2}$; (iii) $V_{1} \quasi V_2$ if and only if there is a multiplier $\Phi$ from $\mathscr{H}_{V_1}$ into $\mathscr{H}_{V_2}$ (that is, $\Phi \mathscr{H}_{V_1} \subset \mathscr{H}_{V_2}$).

What makes this partial order problem interesting from a complex analysis perspective is that under certain circumstances, depending on the Liv\v{s}ic function, the partial order problem (When is $A \unitary B$? When is $A \quasi B$?) can be also rephrased in terms of the existence of (isometric) multipliers from one model space $(\Theta H^2)^{\perp}$ to another, or perhaps from one de Branges-Rovnyak $\mathscr{H}(b)$ space to another. These are well-known and well-studied Hilbert spaces of analytic functions on $\D$ which have many connections to operator theory \cite{MR2760647, MR0244795, MR0215065, MR875237}.

\section{Partial isometries}

Let $\mathcal{B}(\H)$ denote the set of all bounded operators on a separable complex Hilbert space $\H$.
\begin{Definition}
A operator $V \in \mathcal{B}(\H)$ is called a {\em partial isometry} if $V|_{\ker{V}^{\perp}}$ is an isometry.
The space $\ker{V}^{\perp}$ is called the {\em initial space} of $V$ while $\ran{V}$ is called the {\em final space} of $V$. The spaces
$\mathfrak{D}_{+}(V) := \ker{V}$ and  $\mathfrak{D}_{-}(V) := \ran{V}^{\perp}$ are called the {\em defect spaces} of $V$ and the pair of numbers $(n_+, n_- )$, where $n_{+}$ and $n_{-}$ are the corresponding dimensions of $\mathfrak{D}_{+}(V)$ and $\mathfrak{D}_{-}(V)$, are called the {\em deficiency indices} of $V$.
\end{Definition}

Note that a partial isometry $V$ with deficiency indices $(0,0)$ is a unitary operator. The following proposition is standard and routine to verify.

\begin{Proposition}\label{basicPIfacts}
For $V \in \mathcal{B}(\H)$ the following are equivalent:
\begin{enumerate}
\item $V$ is a partial isometry;
\item $V = V V^{*} V$;
\item $V^{*}$ is a partial isometry;
\item $V^{*} V$ is an orthogonal projection;
\item $V V^{*}$ is an orthogonal projection.
\end{enumerate}
\end{Proposition}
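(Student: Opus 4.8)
The plan is to establish the cycle $(1)\Rightarrow(4)\Rightarrow(2)\Rightarrow(1)$, thereby proving the first, second, and fourth conditions equivalent, and then to deduce $(3)$ and $(5)$ at no extra cost by exploiting the fact that condition $(2)$ is invariant under $V\mapsto V^{*}$. Two elementary facts will be used repeatedly: that $\ker{V^{*}V}=\ker{V}$, which is immediate from $\|Vx\|^{2}=\ip{V^{*}Vx}{x}$, and that a self-adjoint idempotent is an orthogonal projection.

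For $(1)\Rightarrow(4)$, let $P$ denote the orthogonal projection of $\H$ onto the initial space $\ker{V}^{\perp}$. Since $V$ is isometric there, $\ip{V^{*}Vx}{x}=\|Vx\|^{2}=\|x\|^{2}=\ip{Px}{x}$ for every $x\in\ker{V}^{\perp}$, and by polarization $\ip{V^{*}Vx}{y}=\ip{Px}{y}$ for all such $x,y$; as both $V^{*}V$ and $P$ annihilate $\ker{V}$, it follows that $V^{*}V=P$, which is an orthogonal projection.

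For $(4)\Rightarrow(2)$, suppose $V^{*}V$ is an orthogonal projection. Its kernel is $\ker{V}$, so it must be the projection onto $(\ker{V})^{\perp}$, that is, $V^{*}V=P$. Then $VV^{*}V=VP$, and since $V$ and $VP$ agree on $\ker{V}$ (both give $0$) and on $(\ker{V})^{\perp}$ (where $P$ acts as the identity), we conclude $VV^{*}V=V$. For $(2)\Rightarrow(1)$, the identity $V=VV^{*}V$ gives $(V^{*}V)^{2}=V^{*}(VV^{*}V)=V^{*}V$, so $V^{*}V$ is a self-adjoint idempotent, hence an orthogonal projection, and as above equals $P$. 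Therefore $\|Vx\|^{2}=\ip{Px}{x}=\|x\|^{2}$ for $x\in(\ker{V})^{\perp}$, showing that $V$ is a partial isometry.

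Finally, taking adjoints in $V=VV^{*}V$ yields $V^{*}=V^{*}VV^{*}$, which is exactly condition $(2)$ written for $V^{*}$; thus $(2)$ holds for $V$ if and only if it holds for $V^{*}$. Applying the already-proved equivalence $(1)\Leftrightarrow(2)$ to $V^{*}$ gives $(2)\Leftrightarrow(3)$, and applying $(2)\Leftrightarrow(4)$ to $V^{*}$ (noting $(V^{*})^{*}V^{*}=VV^{*}$) gives $(2)\Leftrightarrow(5)$. The argument is essentially routine; the only steps demanding any care are the polarization identity in $(1)\Rightarrow(4)$ and the observation that $\ker{V^{*}V}=\ker{V}$, while the adjoint-invariance of $(2)$ is the device that prevents any duplication of effort.
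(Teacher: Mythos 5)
Your proof is correct. The paper offers no proof of this proposition---it simply declares it ``standard and routine to verify''---so there is nothing to compare against; your cycle $(1)\Rightarrow(4)\Rightarrow(2)\Rightarrow(1)$ together with the adjoint-invariance of the identity $V=VV^{*}V$ is exactly the standard verification, and every step (the polarization argument, the identification $\ker{V^{*}V}=\ker{V}$, and the deduction of $(3)$ and $(5)$ by passing to $V^{*}$) is sound.
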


One can show that $V^{*} V$ is the orthogonal projection of $\H$ onto the initial space of $V$ while $V V^{*}$ is the orthogonal projection of $\H$ onto the final space of $V$. Note that if $V$ is a partial isometry, then $Q_1 V Q_2$ is also a partial isometry for any unitary operators $Q_1, Q_2$ on $\H$.

When $\dim{\H} < \infty$, the partial isometries $V$ on $\H$ are better understood \cite{MR0207723, MR0227194, MR0225790}. Here we think of $V \in M_{n}(\C)$.  For example, if $\{\mathbf{u}_{1}, \ldots, \mathbf{u}_{n}\}$ is an orthonormal basis for $\C^n$ then for any $1 \leqslant r \leqslant n$ the (column partitioned) matrix
\begin{equation}\label{weoiruwoeiru}
[\mathbf{u}_{1}|\mathbf{u}_{2}|\cdots|\mathbf{u}_{r}|\mathbf{0}|\mathbf{0}|\cdots|\mathbf{0}]
\end{equation}
 is a partial isometry with initial space $\bigvee\{\mathbf{e}_1, \ldots, \mathbf{e}_{r}\}$ (where $\mathbf{e}_j$ is the $j$th standard basis vectors for $\C^n$ and $\bigvee$ is the linear span) and final space $\bigvee\{\mathbf{u}_1, \ldots, \mathbf{u}_{r}\}$. For any $n \times n$ unitary matrix $Q$
\begin{equation}\label{QUQ}
Q [\mathbf{u}_{1}|\mathbf{u}_{2}|\cdots|\mathbf{u}_{r}|\mathbf{0}|\mathbf{0}|\cdots|\mathbf{0}] Q^{*}
\end{equation} is also a partial isometry.

\begin{Proposition}\label{canonicalformPI}
For $V \in M_{n}(\C)$, the following are equivalent:
\begin{enumerate}
\item $V$ is a partial isometric matrix.
\item $V = Q [\mathbf{u}_{1}|\mathbf{u}_{2}|\cdots|\mathbf{u}_{r}|\mathbf{0}|\mathbf{0}|\cdots|\mathbf{0}] Q^{*}$,
where $\{\mathbf{u}_1, \ldots, \mathbf{u}_r: 1 \leqslant r \leqslant n\}$ is a set of orthonormal vectors in $\C^n$ and $Q$ is a unitary matrix.
\item $V = U P$, where $U$ is a unitary matrix and $P$ is an orthogonal projection.
\end{enumerate}
\end{Proposition}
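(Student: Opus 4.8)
The plan is to prove the cycle of implications $(1)\Rightarrow(2)\Rightarrow(3)\Rightarrow(1)$, concentrating the real work in $(1)\Rightarrow(2)$; the other two implications follow quickly from Proposition \ref{basicPIfacts} together with the elementary facts that unitary conjugation preserves orthogonal projections and that a product of unitaries is unitary.

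For $(1)\Rightarrow(2)$, suppose $V\in M_n(\C)$ is a partial isometry and let $r$ denote the dimension of its initial space $\ker{V}^\perp$. First I would choose an orthonormal basis $\{\mathbf{p}_1,\dots,\mathbf{p}_n\}$ of $\C^n$ adapted to $V$, with $\{\mathbf{p}_1,\dots,\mathbf{p}_r\}$ an orthonormal basis of $\ker{V}^\perp$ and $\{\mathbf{p}_{r+1},\dots,\mathbf{p}_n\}$ an orthonormal basis of $\ker{V}$, and set $Q:=[\mathbf{p}_1|\cdots|\mathbf{p}_n]$, a unitary matrix with $Q\mathbf{e}_j=\mathbf{p}_j$. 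The crucial step is to define $\mathbf{u}_j:=Q^*V\mathbf{p}_j$ for $1\le j\le r$; since $V$ is isometric on its initial space and $Q^*$ is unitary, the set $\{\mathbf{u}_1,\dots,\mathbf{u}_r\}$ is orthonormal. Writing $M:=[\mathbf{u}_1|\cdots|\mathbf{u}_r|\mathbf{0}|\cdots|\mathbf{0}]$, I would then verify $V=QMQ^*$ by evaluating both sides on the basis $\{\mathbf{p}_j\}$: for $j\le r$ one has $QMQ^*\mathbf{p}_j=Q\mathbf{u}_j=QQ^*V\mathbf{p}_j=V\mathbf{p}_j$, and for $j>r$ both sides vanish because $\mathbf{p}_j\in\ker{V}$.

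For $(2)\Rightarrow(3)$, I would factor the core matrix as $M=U_0P_0$, where $P_0$ is the orthogonal projection of $\C^n$ onto $\bigvee\{\mathbf{e}_1,\dots,\mathbf{e}_r\}$ and $U_0:=[\mathbf{u}_1|\cdots|\mathbf{u}_n]$ arises from completing $\{\mathbf{u}_1,\dots,\mathbf{u}_r\}$ to an orthonormal basis of $\C^n$. Substituting this into $V=QMQ^*$ gives $V=(QU_0)(QP_0Q^*)$, which displays $V=UP$ with $U:=QU_0$ unitary and $P:=QP_0Q^*$ an orthogonal projection. (This is, in effect, the polar decomposition of $V$.) The implication $(3)\Rightarrow(1)$ is then immediate: from $V=UP$ we get $V^*V=PU^*UP=P^2=P$, an orthogonal projection, so $V$ is a partial isometry by the equivalence of conditions (1) and (4) in Proposition \ref{basicPIfacts}.

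I expect the only subtle point to be the bookkeeping in $(1)\Rightarrow(2)$ required to realize $V$ as a two-sided conjugate $QMQ^*$ with the \emph{same} unitary on both sides, rather than as a one-sided product such as $MQ^*$; the device that achieves this is absorbing a factor of $Q^*$ into the definition $\mathbf{u}_j=Q^*V\mathbf{p}_j$. The remaining verifications are routine.
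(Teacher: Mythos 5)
The paper states this proposition without proof (it is treated as a standard fact), so there is no in-paper argument to compare against; judging your proposal on its own, the overall strategy---the cycle $(1)\Rightarrow(2)\Rightarrow(3)\Rightarrow(1)$ with the content concentrated in $(1)\Rightarrow(2)$---is the standard one and is essentially right. In $(1)\Rightarrow(2)$ the key points all check out: $\{\mathbf{u}_j\}_{j=1}^{r}$ with $\mathbf{u}_j := Q^{*}V\mathbf{p}_j$ is orthonormal because $V$ is isometric on $\ker{V}^{\perp}$ and $Q^{*}$ is unitary, and the verification of $V = QMQ^{*}$ on the adapted basis is correct. The implication $(3)\Rightarrow(1)$ via $V^{*}V = PU^{*}UP = P$ and the equivalence $(1)\Leftrightarrow(4)$ of Proposition \ref{basicPIfacts} is also fine. (One small point you inherit from the statement itself: the case $V=0$ has $r=0$ and is not literally covered by form (2) with $1\leqslant r\leqslant n$; your argument, like the proposition, tacitly assumes $V\neq 0$.)

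There is, however, an algebraic slip in $(2)\Rightarrow(3)$. Substituting $M = U_0P_0$ into $V = QMQ^{*}$ gives $V = QU_0P_0Q^{*}$, and this is \emph{not} equal to $(QU_0)(QP_0Q^{*}) = QU_0QP_0Q^{*}$: the extra factor of $Q$ sitting between $U_0$ and $P_0$ does not go away, so the displayed factorization is false as written. The correct move is to insert $Q^{*}Q = I$ between $U_0$ and $P_0$, which yields $V = (QU_0Q^{*})(QP_0Q^{*})$; the unitary factor should therefore be $U := QU_0Q^{*}$ rather than $QU_0$, while $P := QP_0Q^{*}$ is indeed the orthogonal projection onto $\bigvee\{\mathbf{p}_1,\ldots,\mathbf{p}_r\} = \ker{V}^{\perp}$, consistent with $V^{*}V = P$. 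This is a one-line repair rather than a conceptual gap, but the step as you wrote it would fail for a generic $Q$.
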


The unitary matrix $U$ in the proposition above is not unique and is often called a {\em unitary extension} of $V$.  For general partial isometries $V$ on possibly infinite dimensional Hilbert spaces $\H$, unitary extensions in $\mathcal{B} (\H )$ need not always exist. However,  we know exactly when this happens \cite{MR1255973}.

\begin{Proposition}\label{has-ue}
A partial isometry $V \in \mathcal{B}(\H)$ has unitary extensions in $\mathcal{B} (\H)$ if and only if $V$ has equal deficiency indices.
\end{Proposition}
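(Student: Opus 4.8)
The plan is to read ``unitary extension'' through the factorization in Proposition~\ref{canonicalformPI}(3): a \emph{unitary extension} of $V$ is a unitary $U \in \mathcal{B}(\H)$ with $UP = V$, where $P = V^{*}V$ is the orthogonal projection of $\H$ onto the initial space $\ker{V}^{\perp}$; equivalently, $U$ agrees with $V$ on $\ker{V}^{\perp}$ (on $\ker{V}$ the partial isometry vanishes while $U$ cannot, so one really does only ask for agreement on the initial space). With this reformulation the whole proposition reduces to the elementary fact that two Hilbert spaces admit a unitary between them exactly when they have the same dimension, applied to the two defect spaces.

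For the forward implication, I would start from a unitary extension $U$ and track what unitarity forces on the defect spaces. Since $U|_{\ker{V}^{\perp}} = V|_{\ker{V}^{\perp}}$ is isometric and $\ker{V}^{\perp}$ is closed, $U$ carries $\ker{V}^{\perp}$ onto the final space $\ran{V}$, which is closed as the isometric image of a complete space. Because $U$ is a surjective isometry it sends orthogonal complements to orthogonal complements, that is $U\big((\ker{V}^{\perp})^{\perp}\big) = \big(U\,\ker{V}^{\perp}\big)^{\perp}$. This identity is the one genuinely load-bearing step, and it is precisely where surjectivity of $U$ (not merely isometry) is used. It yields $U(\mathfrak{D}_{+}(V)) = U(\ker{V}) = \ran{V}^{\perp} = \mathfrak{D}_{-}(V)$, so $U$ restricts to a unitary of $\mathfrak{D}_{+}(V)$ onto $\mathfrak{D}_{-}(V)$, whence $n_{+} = n_{-}$.

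For the converse, assume $n_{+} = n_{-}$, so there is a unitary $W : \mathfrak{D}_{+}(V) \to \mathfrak{D}_{-}(V)$ (two Hilbert spaces of equal dimension being unitarily equivalent). Relative to $\H = \ker{V}^{\perp} \oplus \ker{V}$ on the domain side and $\H = \ran{V} \oplus \ran{V}^{\perp}$ on the range side, I would set $U := VP + W(I - P)$ and verify it is the desired extension. For $x = x_{1} + x_{2}$ with $x_{1} \in \ker{V}^{\perp}$ and $x_{2} \in \ker{V}$, the vectors $Vx_{1} \in \ran{V}$ and $Wx_{2} \in \ran{V}^{\perp}$ are orthogonal, so $\|Ux\|^{2} = \|Vx_{1}\|^{2} + \|Wx_{2}\|^{2} = \|x_{1}\|^{2} + \|x_{2}\|^{2} = \|x\|^{2}$ and $U$ is isometric; since $\ran{U} \supseteq \ran{V} + \ran{V}^{\perp} = \H$ it is onto, hence unitary. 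Finally $UP = VP = V$, so $U$ extends $V$.

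The only real subtlety, and the step I expect to be the crux, is the complement-preservation identity in the forward direction; everything else is bookkeeping with the two orthogonal decompositions. In the infinite-dimensional setting one should take care to establish surjectivity of $U$ explicitly in the converse rather than inferring it from the isometry property alone, which is why I split $\ran{U}$ across the two complementary summands above.
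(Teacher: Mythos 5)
Your proof is correct. The paper itself gives no argument for this proposition---it simply cites the literature---so there is nothing to compare against; your argument (reading ``unitary extension'' as $U$ agreeing with $V$ on the initial space, using that a unitary carries orthogonal complements to orthogonal complements to get $U\,\ker{V} = \ran{V}^{\perp}$ in the forward direction, and patching $V|_{\ker{V}^{\perp}}$ with any unitary $W : \mathfrak{D}_{+}(V) \to \mathfrak{D}_{-}(V)$ for the converse) is exactly the standard proof, and all the details, including the closedness of $\ran{V}$ and the explicit surjectivity check for $U = VP + W(I-P)$, are handled correctly.
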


When $\dim{\H} < \infty$, deficiency indices are always equal.

\begin{Definition}
A partial isometry $V \in \mathcal{B}(\mathcal{H})$ is {\em completely non-unitary} if there is no nontrivial reducing subspace $\mathcal{M}$ of $\mathcal{H}$ (that is, $V \mathcal{M} \subset \mathcal{M}$ and $V^{*} \mathcal{M} \subset \mathcal{M}$) such that $V|_{\mathcal{M}}$ is a unitary operator on $\mathcal{M}$.
\end{Definition}

It is well-known \cite{MR2760647} that every partial isometry $V$ can be written as $V = V_1 \oplus V_2$, where $V_1$ is unitary and $V_2$ is completely non-unitary. In finite dimensions it is easy to identify the completely non-unitary partial isometries.

\begin{Proposition}\label{CNU-Mn}
A partially isometric matrix $V \in M_{n}(\C)$ is completely non-unitary if and only if all of its eigenvalues lie in the open unit disk $\D$.
\end{Proposition}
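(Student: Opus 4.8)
The plan is to pin down, for a partial isometry $V \in M_{n}(\C)$, exactly which reducing subspaces carry a unitary restriction, and to match them against the unimodular eigenvalues of $V$. Throughout I would use that a partial isometry is a contraction: by Proposition~\ref{basicPIfacts} the operator $V^{*}V$ is an orthogonal projection, so $\|Vx\|^{2} = \inner{V^{*}Vx}{x} = \|V^{*}Vx\|^{2} \le \|x\|^{2}$, and in particular every eigenvalue $\lambda$ of $V$ satisfies $|\lambda| \le 1$. Thus the real dichotomy is between eigenvalues in $\D$ and eigenvalues on $\T$, and the statement becomes: $V$ is completely non-unitary if and only if $V$ has no eigenvalue on $\T$.

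One implication is immediate and I would dispatch it first by contraposition. If $V$ has a reducing subspace $\mathcal{M} \neq \{0\}$ with $V|_{\mathcal{M}}$ unitary, then since $\mathcal{M}$ is a nonzero finite-dimensional complex space, $V|_{\mathcal{M}}$ has an eigenvector $v \in \mathcal{M}$, say $Vv = \lambda v$, with $|\lambda| = 1$ because $V|_{\mathcal{M}}$ is unitary. Hence $V$ itself has a unimodular eigenvalue. Consequently, if all eigenvalues of $V$ lie in $\D$, no such $\mathcal{M}$ can exist, so $V$ is completely non-unitary.

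For the converse I would again argue the contrapositive: assuming $V$ has an eigenvalue $\lambda$ with $|\lambda| = 1$, I construct a nontrivial reducing subspace on which $V$ is unitary. The crux is the standard contraction lemma: if $\|V\| \le 1$ and $Vx = \lambda x$ with $|\lambda| = 1$, then $V^{*}x = \bar\lambda x$. This follows from the identity $\|V^{*}x - \bar\lambda x\|^{2} = \|V^{*}x\|^{2} - \|x\|^{2} \le 0$, obtained by expanding the left-hand side and using $\inner{V^{*}x}{x} = \inner{x}{Vx} = \bar\lambda \|x\|^{2}$. Granting this, I would set $\mathcal{M} = \ker{V - \lambda I} \neq \{0\}$: every $x \in \mathcal{M}$ satisfies $Vx = \lambda x \in \mathcal{M}$ and $V^{*}x = \bar\lambda x$, and $\bar\lambda x \in \mathcal{M}$ because $V(\bar\lambda x) = \lambda(\bar\lambda x)$. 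Hence $\mathcal{M}$ is reducing and $V|_{\mathcal{M}} = \lambda I$ is unitary, so $V$ is not completely non-unitary.

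I expect the only genuine content to be the contraction lemma together with the verification that $\ker{V - \lambda I}$ is truly reducing rather than merely invariant; the eigenvalue bookkeeping and the existence of an eigenvector in finite dimensions are routine. A more structural variant would instead take $\mathcal{M}$ to be the span of all eigenvectors for unimodular eigenvalues, use the same lemma to show that distinct unimodular eigenspaces are mutually orthogonal and that no Jordan blocks occur on them, and conclude that this $\mathcal{M}$ is the maximal unitary reducing subspace; but for the stated equivalence a single unimodular eigenvalue already suffices.
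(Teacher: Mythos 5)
Your proof is correct. The paper does not actually supply a proof of this proposition --- it is stated as a known fact about finite-dimensional partial isometries with references given in the surrounding text --- so there is no argument of the authors' to compare against; yours is the standard one and it is complete. Both directions are sound: the forward implication via the existence of a unimodular eigenvalue for any finite-dimensional unitary restriction, and the converse via the contraction lemma that $Vx=\lambda x$ with $|\lambda|=1$ forces $V^{*}x=\bar{\lambda}x$, which makes $\operatorname{Ker}(V-\lambda I)$ reducing with unitary restriction $\lambda I$. The only point worth making explicit is that the inequality $\|V^{*}x\|^{2}-\|x\|^{2}\le 0$ in your lemma uses that $V^{*}$ is also a contraction; this is immediate from $\|V^{*}\|=\|V\|\le 1$, or from Proposition~\ref{basicPIfacts}, by which $V^{*}$ is itself a partial isometry. (Also, your justification that $\bar{\lambda}x\in\mathcal{M}$ is unnecessary: $\mathcal{M}$ is a subspace, so it is closed under scalar multiplication.)
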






\begin{Remark}\label{Livsic-domains}
Our launching point here is the work of Liv\v{s}ic \cite{MR0113143} which explores this same material in a slightly different way. Liv\v{s}ic considers isometric operators $\widehat{V}$ that are defined on a domain $\dom{\widehat{V}}$ on a Hilbert space $\H$ such that $\widehat{V}$ is isometric on $\dom{\widehat{V}}$. Here, the defect spaces are defined to be $\dom{\widehat{V}}^{\perp}$ and $(\widehat{V} \dom{\widehat{V}})^{\perp}$. If we define
\begin{equation}\label{LivPI}
V \mathbf{x} =
\begin{cases} \widehat{V}  \mathbf{x} &\mbox{if } \mathbf{x} \in \dom{\widehat{V}}, \\
0 & \mbox{if } \mathbf{x} \in \dom{\widehat{V}}^{\perp}, \end{cases}
\end{equation}
then $V$ is a partial isometry with initial space $\dom{\widehat{V} }^{\perp}$ and final space $\widehat{V} \dom{\widehat{V}}$. Conversely, if $V$ is a partial isometry, then $\widehat{V} = V|_{\ker{V}^{\perp}}$ is an isometric operator in the Liv\v{s}ic setting.
\end{Remark}

\begin{Remark}
The discussion of unitary equivalence and partial orders in this paper focuses on partial isometries. However, using some standard theory,  all of our results have analogues expressed in terms of unbounded symmetric linear transformations \cite{MR1255973}. Indeed, let
$$\beta(z) = \frac{z - i}{z + i}$$ denote
the {\em Cayley transform}, a fractional linear transformation that maps the upper half plane $\C_{+}$ bijectively to $\D$ and $\R$ bijectively onto $\T \setminus \{1 \}$. Here $\T$ denotes the unit circle in $\C$. Notice that
$$\beta^{-1}(z) = i \frac{1 + z}{1 - z}.$$
If $V$ is a partial isometry, the operator
$$S := \beta^{-1}(V) = i (I + V) (I - V)^{-1}$$ is an unbounded, closed, symmetric linear transformation with domain $$\dom{S} = (I - V) \ker{V}^{\perp}.$$ Note that if $1$ is an eigenvalue of $V$,
then $S = \beta^{-1} (V)$ is not densely defined.  This poses no major technical difficulties in our analysis, but it is something to keep in mind. See \cite{Habock,ST10} for references on symmetric linear transformations which are not necessarily densely defined. We will reserve the term {\em symmetric operator} for a densely defined
symmetric linear transformation.

Conversely, if $S$ is a symmetric linear transformation with domain $\dom{S}$, then
$$\beta(S) = (S - i I)(S + i I)^{-1}$$ is an isometric operator on the domain $(S + i I) \dom{S}$ that can be extended to a partial isometry on all of $\H$ by extending it to be zero on the orthogonal complement of its domain.
A closed symmetric linear transformation is said to be \emph{simple} if its Cayley transform $V = \beta(S)$ is completely non-unitary.  This happens if and only if $S$ has no self-adjoint restriction to the intersection of its domain with a proper, nontrivial invariant subspace.

Note that $V$ has unitary extensions if and only if $\beta^{-1}(V)$ has self-adjoint extensions. The Cayley transform shows that if $V = \beta(S)$, the deficiency subspaces $\ker{V}$ and $\ran{V}^{\perp}$ are equal to the deficiency spaces $\ran{S - i I}^{\perp}$ and $\ran{S + i I}^{\perp}$, respectively.
\end{Remark}

Let us give some examples of partial isometries that will be useful later on.

\begin{Example}\label{12384u234}
\begin{enumerate}
\item The matrices $Q [\mathbf{u}_{1}|\mathbf{u}_{2}|\cdots|\mathbf{u}_{r}|\mathbf{0}|\mathbf{0}|\cdots|\mathbf{0}] Q^{*}$ from \eqref{QUQ} are all of the partial isometries on $\C^n$.
\item Every orthogonal projection is a partial isometry. However, no orthogonal projection is completely non-unitary.
\item The {\em unilateral shift}
$S :H^2 \to H^2$, $S f = z f$, on the Hardy space $H^2$ \cite{Duren} is a partial isometry with initial space $H^2$ and final space
$H^{2}_{0} := \{f \in H^2: f(0) = 0\}$. The defect spaces are
$\mathfrak{D}_{+}(S) = \{0\}$, $\mathfrak{D}_{-}(S) = \C$ and thus the deficiency indices of $S$ are $(0, 1)$. Since the indices are not equal, $S$ does not have unitary extensions to $H^2$ (Proposition \ref{has-ue}).
\item The adjoint $S^{*}$ of $S$ is given by
$S^{*} f = \frac{f - f(0)}{z}$ and it is called the {\em backward shift}. Note that $S^{*}$ is a partial isometry (Proposition \ref{basicPIfacts}) with initial space $H^{2}_{0}$ and final space $H^2$. The defect spaces are
$\mathfrak{D}_{+}(S^{*}) = \C$ and $\mathfrak{D}_{-}(S^{*}) = \{0\}$ and thus the deficiency indices are $(1, 0)$.
Thus the backward shift $S^{*}$ has no unitary extensions to $H^2$.
\item The operator
$S^{*} \oplus S: H^2 \oplus H^2 \to H^2 \oplus H^2$ is a partial isometry with initial space $H^{2}_{0} \oplus H^2$ and final space $H^2 \oplus H^{2}_{0}$. The defect spaces are
$\mathfrak{D}_{+}(S^{*} \oplus S) = \C \oplus \{0\}$ and $\mathfrak{D}_{-}(S^{*} \oplus S) = \{0\} \oplus \C$ and thus $S^{*} \oplus S$ has deficiency indices $(1, 1)$. One can show that this operator is also completely non-unitary and thus $S^{*} \oplus S \in \mathscr{V}_{1}(H^2)$.
\item Consider the partial isometry $S \otimes S^*$ acting on $\H := H^2 \otimes H^2$. Alternatively, this operator can be viewed as the operator block matrix
\be  \begin{bmatrix} 0 & &  &  &  \\  S^* & 0 &  &  & \\ & S^* & 0 &  &  \\ & & \ddots & \ddots &  \\ & & & & \end{bmatrix}  \label{blockop} \ee acting on the Hilbert space
$$ \H := \bigoplus _{k \geqslant 0} H^2. $$
One can verify that
$$\ker{S\otimes S^* } ^\perp = \H _0 := \bigoplus _{k \geqslant 0} H^2 _0,$$
$$\ran{S \otimes S ^* } ^\perp = \ker{S^* \otimes S } =  H ^2 \oplus \bigoplus _{k \geqslant 1}  \{ 0 \}, $$
and that $S \otimes S^{*}$ is completely non-unitary. Thus $S \otimes S^* \in \VV _\infty (H^2 \otimes H^2 )$.
\item If $\Theta$ is an inner function, define $\K_{\Theta} = (\Theta H^2)^{\perp}$ to be the well-known model space.  Consider the compression $S_{\Theta} := P_{\Theta} S|_{\K_{\Theta}}$, of the shift to $\K_{\Theta}$, where $P_{\Theta}$ is the orthogonal projection of $L^2$ onto $\K_{\Theta}$.
If $\Theta(0) = 0$, one can show that
$$\mathfrak{D}_{+}(S_{\Theta}) = \ker{S_{\Theta}} = \C \frac{\Theta}{z}, \quad \mathfrak{D}_{-}(S_{\Theta}) = (\ran{S_{\Theta}} )^{\perp} = \C.$$
Furthermore,
$\ker{S_{\Theta}}^{\perp} = \{f \in \K_{\Theta}: z f \in K_{\Theta}\}$ and so
$S_{\Theta}$ is isometric on $\ker{S_{\Theta}}^{\perp}$. Thus $S_{\Theta}$ is a partial isometry with defect indices $(1, 1)$. It is well-known that the compressed shift $S_{\Theta}$ is irreducible (has no nontrivial reducing subspaces) and thus $S_{\Theta}$ is completely non-unitary. Hence, assuming $\Theta(0) = 0$, $S_{\Theta} \in \mathscr{V}_{1}(\K_{\Theta})$.
The model space $\K_{\Theta}$ is a reproducing kernel Hilbert space with kernel
$$k^{\Theta}_{\lambda} = \frac{1 - \overline{\Theta(\lambda)}\Theta}{1 - \overline{\lambda} z}.$$
To every model space there is a natural conjugation $C_{\Theta}$ defined via the radial (or non-tangential) boundary values of $f$ and $\Theta$ by
$C_{\Theta} f  = \Theta \overline{\zeta f}$. One can see that $C_{\Theta}$ is conjugate linear, isometric, and involutive.  Furthermore, a calculation shows that
$$C_{\Theta} k^{\Theta}_{\lambda}  = \frac{\Theta - \Theta(\lambda)}{z - \lambda}.$$
The compressed shift $S_{\Theta}$ also obeys the property
$S_{\Theta} = C_{\Theta} S_{\Theta}^* C_{\Theta}$.
This puts $S_{\Theta}$ into a class of operators called {\em complex symmetric operators} \cite{CSO,CSO2, MPCSO}.
Furthermore, $S_{\Theta}^{*} = S^{*}|_{\K_{\Theta}},$
the restriction of the backward shift to the model space $\K_{\Theta}$.
\item Another partial isometry on $\K_{\Theta}$ closely related to $S_{\Theta}$ is created as follows.
The operator $\widehat{M}_{\Theta} f = z f$ is not a well defined operator on all of $\K_{\Theta}$, but it is defined on
$\dom{\widehat{M}_{\Theta}} = \{f \in \K_{\Theta}: z f \in \K_{\Theta}\}$.
A little thought shows that
$\dom{\widehat{M} _{\Theta}} = \{f \in \K_{\Theta}: (C_{\Theta} f)(0) = 0\}$.
Using the isometric nature of $C_{\Theta}$ and the fact that point evaluations are continuous, we see that $\dom{\widehat{M}_{\Theta}}$ is closed. Furthermore, we know that
$\ran{\widehat{M} _{\Theta}} = \widehat{M_{\Theta}} \dom{\widehat{M_{\Theta}}} = \{f \in \K_{\Theta}: f(0) = 0\}$. Keeping with our previous notation from Remark \ref{Livsic-domains}, let $M_{\Theta}$ be the operator that is equal to $\widehat{M}_{\Theta}$ on $\dom{\widehat{M}_{\Theta}}$ and equal to zero on $\dom{\widehat{M}_{\Theta}}^{\perp}$. Observe that $M_{\Theta}$ is a partial isometry with initial space $\dom{\widehat{M} _{\Theta}}$ and final space $\ran{\widehat{M} _{\Theta}}$. Furthermore, the defect spaces are
$$\mathfrak{D}_{+}(M_{\Theta}) = \C C_{\Theta} k^{\Theta}_{0}, \quad \mathfrak{D}_{-}(M_{\Theta}) = \C k^{\Theta}_{0}$$
so that $M_{\Theta} \in \mathscr{V}_{1}(\K_{\Theta})$. In fact, if $\Theta(0) = 0$, then
$M_{\Theta} = S_{\Theta}$. We will see in Example \ref{FShift} below that for any $a \in \D$,
$M_{\Theta} \cong M_{\Theta_a}$, where
$$\Theta_{a} := \frac{\Theta - a}{1 - \overline{a} \Theta}.$$
Thus for any inner $\Theta$, $M_{\Theta} \cong S_{\Theta_{\Theta(0)}}$.
\item For $b \in H^{\infty}_{1} := \{g \in H^{\infty}: \|g\|_{\infty} \leqslant 1\}$, the closed unit ball in $H^{\infty}$, define
$\HH(b)$, the {\em deBranges-Rovnyak} space to be the reproducing kernel space corresponding to the kernel
$$k^{b}_{\lambda}  = \frac{1 - \overline{b(\lambda)} b}{1 - \overline{\lambda} z}, \quad \lambda, z \in \D.$$ When $\|b\|_{\infty} < 1$, $\HH(b) = H^2$ with an equivalent norm. On the other extreme, when $b$ is an inner function,  $\HH(b)$ is the model space $\K_b$ with the standard $H^2$ norm \cite{Sarason}.

The analogue of the compressed shift $S_{\Theta}$ can be generalized to the case where $b$ is an extreme point of the unit ball of $H^\infty$, but not to the case where $b$ is not an extreme point. To see this, note from \cite[II-7]{Sarason} that $S^{*} \HH(b) \subset \HH(b)$.
If $X  = S^{*}|_{\HH(b)},$
then it was shown in \cite[II-9]{Sarason} that
$X^{*} f = S f - \langle f, S^{*} b\rangle_{b} b.$
If we define
$\HH_{0}(b) = \{f \in \HH: f(0) = 0\}$, we can use the formula above for $X^{*}$ to get
\begin{align*}
X^{*} X f  = X^{*} S^{*} f
 = S S^{*} f - \langle S^{*} f, S^{*} b\rangle_{b} b= f - \langle S^{*} f, S^{*} b\rangle_b b.
\end{align*}
 Since $b$ is an extreme point, $b \notin \HH (b)$ by \cite[V-3]{Sarason}, and it follows that
 $\langle S^{*} f, S^{*} b\rangle_b b = 0$. Thus
 $X^{*}|_{S^{*} \HH_{0}(b)} = S|_{S^{*} \HH_{0}(b)}$ and
 $\widehat{M} _{b} := S|_{S^{*} \HH_{0}(b)}$ is an isometry from $S^{*} \HH_{0}(b)$ onto $\HH_{0}(b)$. A little thought shows that
 $\{f \in \HH(b): z f \in \HH(b)\} = S^{*} \HH_{0}(b)$ and so $\widehat{M} _b$ is multiplication by the independent variable on
 $$\dom{\widehat{M} _b}  = \{f \in \HH(b): z f \in \HH(b)\}.$$

One also has
 $$\ran{\widehat{M} _b}^{\perp} = \C k^{b}_{0}.$$
 Furthermore, using the fact that $\langle S^{*} f, S^{*} b\rangle_{b} = 0$ for all $f \in S^{*} \HH_{0}(b) = \dom{\widehat{M}_b}$, we see that
 $$\dom{\widehat{M} _b}^{\perp} = \C S^{*} b.$$
 This means that the extension operator $M_{b}$ from Remark \ref{Livsic-domains} is a partial isometry with $(1, 1)$ deficiency indices.
One can also show that $M_{b}$ is completely non-unitary and thus $M_{b} \in \VV _1 (\HH (b) )$ whenever $b$ is extreme. The analysis above breaks down when $b$ is non-extreme.

\end{enumerate}
\end{Example}

\section{Abstract model spaces}
\label{absmodel}

In this section we put some results from \cite{MR0113143, MR0048704, MR0244795, MR0215065, MR875237} in a somewhat different context and show  that for $V \in \VV_n(\H)$, $n < \infty$, and {\em model} $\Gamma$ for $V$ (which we will define momentarily) there is an associated reproducing kernel Hilbert space of $\C^n$-valued analytic functions $\HH_{V, \Gamma}$ on $\C \setminus \T$ such that
$$V|_{\ker{V}^{\perp}} \cong \widehat{\ZZZ} _{V, \Gamma},$$ where
$$\widehat{\ZZZ}_{V, \Gamma}: \dom{\widehat{\ZZZ} _{V,\Gamma}} \to \HH_{V}, \quad \widehat{\ZZZ}_{V, \Gamma} = z f,$$
$$\dom{\widehat{\ZZZ}_{V, \Gamma}} = \{f \in \HH_{V, \Gamma}: z f \in \HH_{V, \Gamma}\}.$$ As before (Remark \ref{Livsic-domains}), $\ZZZ _{V, \Gamma}$ is the partial isometric extension of $\widehat{\ZZZ} _{V, \Gamma}$.  This idea was used in a recent paper \cite{AMR}, in the setting of symmetric operators, but we outline the idea here.

Since $V \in \VV_n(\H)$, it has equal deficiency indices and we know from Proposition \ref{has-ue} that $V$ has a unitary extension $U$. For $V \in \VV_{n}(\H)$ Liv\v{s}ic \cite{MR0113143} defines, for each $z \in \C \setminus \T$, the isometric linear transformation $V_z$ by
$$V_{z} f:= (V - z I) (I - \overline{z} V)^{-1} f, \quad f \in (I - \overline{z} V)|\ker{V}^{\perp}.$$ Extend this definition to all of $\H$ by making $V_z$ equal to zero on
$((I - \overline{z} V)|\ker{V}^{\perp})^{\perp}$.
This will define a partial isometry whose initial space is
$$\ker{V_z}^{\perp}= (I - \overline{z} V)\ker{V}^{\perp}.$$
We we define
$$\widetilde{\operatorname{Ran}}(V - z I) := (V - z I) \ker{V}^{\perp},$$
 the final space of $V_{z}$ is
$$\TRng(V_z) := V_z \ker{V_z} ^\perp = \ran{V_z}.$$

\begin{Proposition}
For each $z \in \C \setminus \T$, we have $\widetilde{\operatorname{Ran}}(V_z) = \widetilde{\operatorname{Ran}}(V - z I)$.
\end{Proposition}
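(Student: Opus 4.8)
The plan is to unwind the definitions, the only genuine point being the well-definedness of the resolvent-type factor $(I - \overline{z}V)^{-1}$ that appears in Liv\v{s}ic's formula for $V_z$. By definition the final space of $V_z$ is $\TRng(V_z) = V_z \ker{V_z}^{\perp}$, and the preceding discussion has already identified the initial space as $\ker{V_z}^{\perp} = (I - \overline{z} V)\ker{V}^{\perp}$. So it suffices to evaluate $V_z$ on a general vector of this initial space and check that the image is exactly $(V - zI)\ker{V}^{\perp}$, which is by definition $\TRng(V - zI)$.

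First I would fix $f \in \ker{V}^{\perp}$ and set $g := (I - \overline{z}V)f$, so that $g$ ranges over all of $\ker{V_z}^{\perp}$ as $f$ ranges over $\ker{V}^{\perp}$. Applying the defining formula for $V_z$ and cancelling,
\[
V_z g = (V - zI)(I - \overline{z}V)^{-1} g = (V - zI)(I - \overline{z}V)^{-1}(I - \overline{z}V) f = (V - zI) f,
\]
provided $(I - \overline{z}V)^{-1}g = f$. Letting $f$ run over $\ker{V}^{\perp}$ then yields $\TRng(V_z) = V_z\ker{V_z}^{\perp} = (V - zI)\ker{V}^{\perp} = \TRng(V - zI)$, as claimed.

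The one step requiring care, and the only place the hypothesis $z \notin \T$ enters, is verifying that $(I - \overline{z}V)$ is injective on $\ker{V}^{\perp}$, so that $(I - \overline{z}V)^{-1}$ really inverts $(I-\overline{z}V)|_{\ker{V}^{\perp}}$ and the cancellation above is legitimate. I would argue this directly: if $f \in \ker{V}^{\perp}$ satisfies $(I - \overline{z}V)f = 0$, then $f = \overline{z}\, V f$, and since $V$ is isometric on $\ker{V}^{\perp}$ we obtain $\|f\| = |z|\,\|Vf\| = |z|\,\|f\|$; because $|z| \neq 1$ this forces $f = 0$. (For $|z| < 1$ one may alternatively note $\|\overline{z} V\| < 1$, so that $I - \overline{z}V$ is boundedly invertible on all of $\H$.) This injectivity is the main obstacle only in the sense that it is the sole nontrivial ingredient; everything else is a formal manipulation of the defining formula for $V_z$.
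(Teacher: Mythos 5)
Your proof is correct and follows essentially the same route as the paper: both compute $\TRng(V_z) = V_z\ker{V_z}^{\perp} = (V-zI)(I-\overline{z}V)^{-1}(I-\overline{z}V)\ker{V}^{\perp} = (V-zI)\ker{V}^{\perp}$. The only difference is that you explicitly justify the injectivity of $I - \overline{z}V$ on $\ker{V}^{\perp}$ (using that $V$ is isometric there and $|z|\neq 1$), a point the paper leaves implicit.
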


\begin{proof}
Note that $\ker{V_z}^{\perp} = (I - \overline{z} V)\ker{V}^{\perp}$ and so
\begin{align*}
 &\TRng(V_z) = V_{z} \ker{V_z}^{\perp}\\
& = (V - z I)(I - \overline{z} V)^{-1} (I - \overline{z} V)\ker{V}^{\perp}\\
& = (V - z I)\ker{V}^{\perp} = \TRng(V - z I) \qedhere
\end{align*}
\end{proof}

\begin{Proposition}[Liv\v{s}ic]
For each $z \in \C \setminus \T$ and unitary extension $U$ of $V$ we have
$$(I - \overline{z} U)^{-1} \TRng (V) ^{\perp} = \TRng (V_z) ^{\perp}.$$
\end{Proposition}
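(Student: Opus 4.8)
The plan is to recast the claimed equality of subspaces as a single membership criterion and then verify that criterion by a short duality computation. Since $U$ is unitary we have $\sigma(U)\subseteq\T$, so for $z\in\C\setminus\T$ the operator $I-\overline z\,U$ has spectrum on the circle of radius $|z|\neq 1$ and is therefore boundedly invertible on $\H$; in particular $(I-\overline z\,U)^{-1}$ makes sense and is a bijection of $\H$. Writing $\mathcal D:=\TRng(V)^{\perp}$ and noting that $\TRng(V)=V\ker{V}^{\perp}=\ran{V}$, we have $\mathcal D=\ran{V}^{\perp}=\mathfrak D_{-}(V)$. By the preceding proposition $\TRng(V_z)=\TRng(V-zI)=(V-zI)\ker{V}^{\perp}$. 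Thus it suffices to prove, for every $g\in\H$,
\[
g\in\TRng(V_z)^{\perp}\quad\Longleftrightarrow\quad (I-\overline z\,U)g\in\mathcal D,
\]
since this is exactly the assertion $\TRng(V_z)^{\perp}=(I-\overline z\,U)^{-1}\mathcal D$.

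The key step is a pairing identity. Because $U$ is a unitary extension of $V$, we have $Uf=Vf$ for every $f\in\ker{V}^{\perp}$ and $\inner{Ux}{Uy}=\inner{x}{y}$ for all $x,y\in\H$. Using these two facts I would compute, for $f\in\ker{V}^{\perp}$ and $g\in\H$,
\[
\inner{(I-\overline z\,U)g}{Uf}=\inner{g}{Uf}-\overline z\,\inner{Ug}{Uf}=\inner{g}{Vf}-\overline z\,\inner{g}{f},
\]
whose complex conjugate is precisely $\inner{Vf}{g}-z\,\inner{f}{g}=\inner{(V-zI)f}{g}$. Hence
\[
\inner{(V-zI)f}{g}=\overline{\inner{(I-\overline z\,U)g}{Uf}}\qquad(f\in\ker{V}^{\perp}).
\]

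Finally I would read off the criterion from this identity. Since $\ran{V}=U\ker{V}^{\perp}$, the right-hand pairing vanishes for all $f\in\ker{V}^{\perp}$ if and only if $(I-\overline z\,U)g\perp\ran{V}$, that is, $(I-\overline z\,U)g\in\ran{V}^{\perp}=\mathcal D$; and the left-hand pairing vanishes for all such $f$ if and only if $g\perp(V-zI)\ker{V}^{\perp}=\TRng(V_z)$. This yields the displayed equivalence, and hence the proposition.

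The computation carries no deep obstacle; the one point requiring care is the bookkeeping of the two roles of $U$—that it agrees with $V$ on the initial space $\ker{V}^{\perp}$ while being unitary on all of $\H$—together with tracking the conjugate on $z$. The genuinely useful idea, which makes everything collapse, is to test $(I-\overline z\,U)g$ against $Uf$ rather than against $f$: this is exactly what converts the resolvent-type condition defining $(I-\overline z\,U)^{-1}\mathcal D$ into the orthogonality condition defining $\TRng(V_z)^{\perp}$.
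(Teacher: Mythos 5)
Your proof is correct and rests on the same duality computation as the paper's: pairing $(V-zI)f$ for $f\in\ker{V}^{\perp}$ against a resolvent-transformed vector and using that $U$ is unitary and agrees with $V$ on $\ker{V}^{\perp}$, so that $U\ker{V}^{\perp}=\TRng(V)$. Your formulation is marginally cleaner in that the equivalence $g\in\TRng(V_z)^{\perp}\iff(I-\overline{z}U)g\in\TRng(V)^{\perp}$ delivers both inclusions at once, whereas the paper's displayed computation literally records only the inclusion $(I-\overline{z}U)^{-1}\TRng(V)^{\perp}\subseteq\TRng(V_z)^{\perp}$.
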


\begin{proof}
	Suppose $f \in \TRng(V) ^\perp$. By the previous proposition, $\TRng(V_z) =\TRng(V-zI) = (V-zI) \ker{V} ^\perp$. Hence,
\begin{align*}
\ip{(I-\overline{z} U) ^{-1} f }{\TRng(V_z)} & =  \ip{U^* (U^* -\overline{z}) ^{-1} f }{(V-zI) \ker{V} ^\perp} \nonumber \\
& =  \ip{f}{ U\ker{V} ^\perp} = \ip{f}{\TRng(V)} \nonumber  =  0. \nonumber \qedhere
\end{align*}
\end{proof}

We now follow a construction in \cite{AMR}. The proofs there are in the setting of symmetric operators but the proofs but carry over to our setting.  Indeed, since $\TRng{V}^{\perp}$ is an $n$-dimensional vector space, we let
$$j: \C^n \to \TRng{V}^{\perp}$$ be any isomorphism and define
$$\Gamma(\lambda) := (I - \overline{\lambda} U)^{-1} \circ j.$$ Then
$\Gamma: \C \setminus \T \to \mathcal{B}(\C^n, \H)$ is anti-analytic and, for each $\lambda \in \C \setminus \T$,
$$\Gamma(\lambda): \C^n \to \TRng(V - \lambda I)^{\perp}$$ is invertible. We also see that $\Gamma(z)^{*} \Gamma(\lambda)$ is invertible for $z, \lambda \in \D$ or $z, \lambda \in \D_{e}$, where
$\D_{e} := \C \setminus \D^{-}$. Finally, as discussed in \cite{AMR}, the fact that $V$ is completely non-unitary (so that $\beta ^{-1} (V)$ is simple) implies $$\bigvee_{\lambda \in \C \setminus \T} \ran{\Gamma(\lambda)} = \H.$$ For any $f \in \H$ define
$$\widehat{f}(\lambda) = \Gamma(\lambda)^{*} f, $$ and let
$$\HH_{V, \Gamma}:= \{\widehat{f}: f \in \H\}.$$
When endowed with the inner product
$$\langle \widehat{f}, \widehat{g} \rangle_{\HH_{V, \Gamma}} := \langle f, g \rangle,$$
 $\HH_{V, \Gamma}$ becomes a $\C^n$-valued Hilbert space of analytic functions on $\C \setminus \T$ such that the operator
$$f \mapsto \widehat{f}$$ is a unitary operator from $\H$ onto $\HH_{V, \Gamma}$ which induces the unitary equivalence $$V|_{\ker{V}^{\perp}} \cong  \widehat{\ZZZ} _{V, \Gamma},$$ where the isometric linear transformation $\widehat{\ZZZ} _{V, \Gamma}$
acts as multiplication by the independent variable on $\HH_V$ on the domain
$$\dom{\widehat{\ZZZ} _{V, \Gamma}} = \{f \in \HH_{V, \Gamma}: z f \in \HH_{V, \Gamma}\}.$$  Note that the hypothesis that $V$ is completely non-unitary is needed here for the inner product on $\HH_{V, \Gamma}$ to be meaningfully defined,
see \cite{AMR} for details.

\begin{Definition}
$\HH_{V, \Gamma}$ is the {\em abstract model space} for $V$ induced by the model $\Gamma$.
\end{Definition}

\begin{Remark} We are not constrained by the above Liv\v{s}ic trick in selecting our model $\Gamma$ for $V$. There are other methods of constructing a model \cite{AMR}. For example, we can use Grauert's Theorem, as was used to prove a related result for bounded operators in a paper of Cowen and Douglas \cite{MR501368}, to find a anti-analytic vector-valued function
$$\gamma(\lambda) := (\gamma(\lambda)_1, \cdots, \gamma(\lambda)_{n}), \quad \lambda \in \C \setminus \T,$$
where $\{\gamma(\lambda)_1, \cdots, \gamma(\lambda)_{n}\}$ is a basis for $\TRng(V - \lambda I)^{\perp}$. Then, if $\{\mathbf{e}_j\}_{j = 1}^{n}$ is the standard basis for $\C^n$,  we can define our abstract model for $V$ to be
\begin{equation}\label{G-trick}
\Gamma(\lambda) := \sum_{j = 1}^{n} \gamma(\lambda)_{j} \otimes \mathbf{e}_j.
\end{equation}
 \end{Remark}

An abstract model space for $\HH_{V, \Gamma}$ is not unique. However, if $\HH_{V, \Gamma}$ and $\HH_{V, \Gamma'}$ are two abstract model spaces for $V$ determined by the models $\Gamma$ and $\Gamma'$, then
$\HH_{V, \Gamma'}= \Theta \HH_{V, \Gamma}$
for some analytic matrix-valued function on $\C \setminus \T$.
Via this multiplier $\Theta$ one can often realize, by choosing the model in a particular way,  $\HH_{V, \Gamma}$ as a certain well-known space of analytic functions such as a model space, de Branges-Rovnyak space, or a Herglotz space. We will get to this in a moment. For now we want to keep our discussion as broad as possible.

\begin{Remark}
Since $\widehat{\ZZZ} _{V, \Gamma} \cong V|_{\ker{V}^{\perp}}$,  $\widehat{\ZZZ}_{V, \Gamma}$ is isometric on $\dom{\widehat{\ZZZ} _{V, \Gamma}}$. As discussed in Remark \ref{Livsic-domains}, we need to think of $\widehat{\ZZZ}_{V, \Gamma}$ as a partial isometry on $\HH_{V, \Gamma}$. We can do this by extending $\widehat{\ZZZ} _{V, \Gamma}$ to all of $\HH_{V, \Gamma}$ so that the
extended operator $\ZZZ_{V, \Gamma}$ on $\HH_{V, \Gamma}$ is a partial isometry with
$$\ker{\ZZZ_{V, \Gamma}}^{\perp} = \dom{\widehat{\ZZZ} _{V, \Gamma}}.$$ The unitary equivalence of $V|_{\ker{V}^{\perp}}$ and $\widehat{\ZZZ} _{V, \Gamma}$ can be extended to a unitary equivalence of $V$ and $\ZZZ_{V, \Gamma}$.
\end{Remark}

The representing space
$$\HH_{V, \Gamma} =: \HH$$
 turns out to be a reproducing kernel Hilbert space with  reproducing kernel
$$k^{\mathscr{H}}_{w}(z) = \Gamma(z)^{*} \Gamma(w), \quad w, z \in \C \setminus \T.$$
This kernel is $M_{n}(\C)$-valued for each $w, z \in \C \setminus \T$, is analytic in $z$, and anti-analytic in $w$. By the term reproducing kernel we mean that for any ($\C^n$-valued) $f \in \mathscr{H}$ and any $w \in \C \setminus \T$ we have
$$\langle f, k^{\mathscr{H}}_{w}(\cdot) \mathbf{a} \rangle_{\mathscr{H}} = \langle f(w), \mathbf{a}\rangle_{\C^{n}} \quad \forall \mathbf{a} \in \C^n.$$ In the above $\langle \cdot, \cdot \rangle_{\mathscr{H}}$ is the inner product in the Hilbert space $\mathscr{H}$ while $\langle \cdot, \cdot \rangle_{\C^n}$ is the standard inner product on $\C^n$.

Also note that the space $\mathscr{H}$ has the division property in that if $f \in \mathscr{H}$ and $f(w) = \mathbf{0}$, then $(z - w)^{-1} f \in \mathscr{H}$.  This means that for any $w \in \C \setminus \T$, there is a $f \in \mathscr{H}$ for which $f(w) \not = \mathbf{0}$. From the reproducing kernel identity above, we see that for any $w \in \C \setminus \T$, the span of
$\{k^{\mathscr{H}}_{w}(\cdot) \mathbf{a}: \mathbf{a} \in \C^n\}$ is an $n$-dimensional subspace of $\mathscr{H}$. Such a kernel is said to be non-degenerate.

Let us give a few examples of abstract models for some of the partial isometries from Example \ref{12384u234}. There is a more canonical choice of model space for $V$. We will see this in the next section.

\begin{Example}[Classical Model spaces]
\label{scalarmodel}
Recall the model space $\K_\Theta = (\Theta H^2)^{\perp}$ and the operator $\widehat{M} _{\Theta}$. If we understand that
$$\ran{\widehat{M} _{\Theta} - \lambda I} = (\widehat{M} _{\Theta} - \lambda I) \dom{\widehat{M} _{\Theta}},$$ then
$$\TRng(M_{\Theta} - \lambda I) = \ran{\widehat{M} _{\Theta} - \lambda I}.$$

Standard computations show that
$$\ran{\widehat{M} _{\Theta} - \lambda I}^{\perp} = \C k^{\Theta}_{\lambda}, \qquad |\lambda| < 1,$$
$$\ran{\widehat{M} _{\Theta} - \lambda I}^{\perp} = \C C_{\Theta} k^{\Theta}_{1/\overline{\lambda}}, \qquad |\lambda| > 1,$$
where $C_{\Theta}$ is the conjugation on $\K_{\Theta}$ discussed in Example \ref{12384u234}.

Using our trick from \eqref{G-trick}, we can compute the abstract model for the partial isometry $M_{\Theta}$ by defining
$$\Gamma(\lambda) = \begin{cases} k^{\Theta}_{\lambda} \otimes 1 &\mbox{if } |\lambda| < 1, \\
C_{\Theta} k^{\Theta}_{1/\overline{\lambda}} \otimes 1 & \mbox{if } |\lambda| > 1.
 \end{cases}$$
 Our abstract model space for $M_{\Theta}$ corresponding to $\Gamma$ is thus
 $$\HH_{M_{\Theta}, \Gamma} = \{\Gamma^{*} f: f \in \K_{\Theta}\}.$$
Observe that
 $$\Gamma(\lambda)^{*} f =  \begin{cases} \langle f, k^{\Theta}_{\lambda}\rangle  &\mbox{if } |\lambda| < 1, \\
\langle f, C_{\Theta} k^{\Theta}_{\lambda} \rangle  & \mbox{if } |\lambda| > 1,
\end{cases}$$ which becomes
$$\Gamma(\lambda)^{*} f = \begin{cases} f(\lambda) &\mbox{if } |\lambda| < 1, \\
\overline{(C_{\Theta} f)(1/\overline{\lambda})} & \mbox{if } |\lambda| > 1.
 \end{cases}$$
\end{Example}

\begin{Example}[Vector-valued model spaces]
For a $n \times n$ matrix-valued inner function $\Theta$ on $\D$ we can define the vector-valued model space by
$$\K_{\Theta} = (\Theta H^{2}_{\C^n})^{\perp},$$
where $H^{2}_{\C^n}$ is the vector-valued Hardy space of $\D$.
The reproducing kernel for $\K_{\Theta}$ is the matrix
$$K^{\Theta}_{\lambda}(z) = \frac{1 - \Theta(\lambda)^{*} \Theta(z)}{1 - \overline{\lambda} z},$$
meaning
$$\langle f(\lambda), \mathbf{a} \rangle_{\C^n} = \langle f, K^{\Theta}_{\lambda} \mathbf{a}\rangle_{H^{2}_{\C^n}}, \quad \lambda \in \D, \mathbf{a} \in \C^n, f \in H^{2}_{\C^n}.$$
There is a conjugation of sorts here, defined by
$$C_{\Theta}: \K_{\Theta} \to K_{\Theta^{T}}, \quad (C_{\Theta} f)(\zeta) = \Theta^{T}(\zeta) (\dagger f)(\zeta),$$
where $\dagger$ is component-wise Schwarz reflection and $T$ denotes the transpose.
As before, one can show that
$$\ran{\widehat{M} _{\Theta} - \lambda I} = \bigvee \{K^{\Theta}_{\lambda} \mathbf{e}_{j}: 1 \leqslant j \leqslant n\}, \qquad |\lambda| < 1$$ and
$$\ran{\widehat{M} _{\Theta} - \lambda I} = \bigvee \{C_{\Theta^{T}} K^{\Theta^{T}}_{1/\overline{\lambda}} \mathbf{e}_{j}: 1 \leqslant j \leqslant n\}, \qquad |\lambda| > 1.$$
An abstract model for $M_{\Theta}$ is then
$$\Gamma(\lambda) = \begin{cases} \sum_{j = 1}^{n} K^{\Theta}_{\lambda} \mathbf{e}_j \otimes \mathbf{e}_j &\mbox{if } |\lambda| < 1, \\
\sum_{j = 1}^{n} C_{\Theta^{T}} K^{\Theta^{T}}_{1/\overline{\lambda}} \mathbf{e}_j \otimes \mathbf{e}_j & \mbox{if } |\lambda| > 1. \end{cases}$$
For any $f \in K_{\Theta}$, we have
$$\langle \Gamma(\lambda)^{*} f, \mathbf{a} \rangle_{\C^n} = \begin{cases} \langle f(\lambda), \mathbf{a} \rangle_{\C^n} &\mbox{if } |\lambda| < 1, \\
\langle \mathbf{a}, \Theta^{T}(1/\overline{\lambda}) (\dagger f)(1/\overline{\lambda})\rangle_{\C^n} & \mbox{if } |\lambda| > 1. \end{cases}$$
\end{Example}

\begin{Example}[de Branges-Rovnyak spaces]
Let $b \in H^{\infty}_{1}$.
As discussed previously, there is a natural
conjugation $C_b : \HH (b) \rightarrow \HH (b)$ which intertwines $M_b$ and $M_b ^*$ and operates on reproducing kernels by
$$ C_b k_\lambda ^b (z) = \frac{b(z) - b(\lambda)}{z-\lambda}.$$ Using this conjugation, Example \ref{scalarmodel} generalizes almost verbatim to this case. In particular
$$ \ran{\widehat{M} _b - \lambda I } ^\perp = \C k_\lambda ^b, \quad \mbox{if} \ |\lambda| < 1, $$
$$ \ran{\widehat{M} _b - \lambda I } = \C C_b k ^b _{1 / \overline{\lambda} }, \quad \mbox{if} \ | \lambda | > 1, $$
and we can define a model for the partial isometry $M_b  $ via
$$ \Gamma (\lambda ) = \left\{ \begin{array}{cc} k _\lambda ^b \otimes 1 & \mbox{if} \ |\lambda | < 1, \\ C_b k ^b _{1 / \overline{\lambda} } \otimes 1 & \mbox{if} \ |\lambda | > 1. \end{array} \right. $$
As before we get that $\HH _{M_b  } = \{ \Gamma ^* f | \ f \in \K _b \}$ where
$$ \Gamma (\lambda ) ^* f = \left\{ \begin{array}{cc} f (\lambda ) & \mbox{if} \ |\lambda | < 1, \\ \overline{(C_b f) (1 / \overline{\lambda} ) } & \mbox{if} \ |\lambda | > 1 .\end{array} \right. $$

\end{Example}

\begin{Example}[$S^{*} \oplus S$]\label{SsPS}
Consider the operator $A := S^{*} \oplus S$
 on $\H := H^2 \oplus H^2$ discussed earlier in Example \ref{12384u234}.  Since $S$ has indices $(0, 1)$ and $S^{*}$ has indices $(1, 0)$, it follows that $A$ has indices $(1, 1)$. Moreover, one can show that  $A$ is completely non-unitary. A calculation using the fact that
$$\ker{A} = \ker{S^{*}} \oplus \ker{S} = \C \oplus \mathbf{0}$$ shows that
$$\TRng(A - \lambda I) = \{0\} \oplus \ran{S - \lambda I} = \{0\} \oplus \C c_{\lambda}, \quad |\lambda| < 1$$ and
$$\TRng(A - \lambda I) = (S^{*} - \lambda I) \ker{S^{*}}^{\perp} = \C c_{1/\lambda} \oplus \{0\}, \quad |\lambda| > 1.$$
In the above,
$$c_{\lambda}(z) = \frac{1}{1 - \overline{\lambda} z}$$ is
the Cauchy kernel (the reproducing kernel for $H^2$).
Thus the model becomes
$$\Gamma(\lambda) = \begin{cases} (0 \oplus c_{\lambda}) \otimes 1 &\mbox{if } |\lambda| < 1, \\
(c_{1/\lambda} \oplus 0) \otimes 1 & \mbox{if } |\lambda| > 1, \end{cases}$$ and so
$$\Gamma(\lambda)^{*} (f_1 \oplus f_2) = \begin{cases} f_2(\lambda) &\mbox{if } |\lambda| < 1, \\
f_{1}(1/\lambda) & \mbox{if } |\lambda| > 1. \end{cases}$$
\end{Example}

\begin{Example}[A restriction of $(S^{*} \oplus S)$]
\label{SsPS2}
Let
$$B := A|_{H^{2} \oplus H^{2}_{0}} = S^{*} \oplus S|_{H^{2}_{0}}$$
which was discussed earlier in Example \ref{12384u234}.
Observe that $\ker{B} = \C (1 \oplus 0)$ and so $\ker{B}^{\perp} = H^{2}_{0} \oplus H^{2}$. Furthermore, $\ran{B} ^\perp =\C (0 \oplus z )$.  One can check that $B \in \VV _1 (H^2 \oplus H^2 _0)$.
For $|\lambda| > 1$ we have
\begin{align*}
\TRng(B - \lambda I)^{\perp} & = \TRng(S^{*} - \lambda I)^{\perp} \oplus \TRng(S - \lambda)^{\perp}
 = \C c_{1/\lambda} \oplus \{0\}.
\end{align*}
When $|\lambda| < 1$ we have
\begin{align*}
\TRng(B - \lambda I)  = \TRng(S^{*} - \lambda I) \oplus \TRng(S|_{H^{2}_{0}} - \lambda I)^{\perp}
 = \{0\} \oplus \ran{S|_{H^{2}_{0}} - \lambda I}^{\perp}.
\end{align*}
A little exercise shows, still assuming $|\lambda| < 1$, that
$$\ran{S|_{H^{2}_{0}} - \lambda I}^{\perp} = \C \frac{c_{\lambda} - c_{0}}{z}.$$
Thus the abstract model for $B$ on $ H^{2} \oplus H^{2}_{0}$ is
$$\Gamma(\lambda) = \begin{cases} (0 \oplus \frac{c_{\lambda} - c_0}{z}) \otimes 1 &\mbox{if } |\lambda| < 1 ,\\
(c_{1/\lambda} \oplus 0) \otimes 1& \mbox{if } |\lambda| > 1. \end{cases}$$
For $f_1 \in H^2$ and $f_2 \in H^{2}_{0}$, we have
$$\Gamma(\lambda)^{*} (f_1 \oplus f_2) = \begin{cases} f_{2}(\lambda)/\lambda &\mbox{if } |\lambda| < 1 ,\\
f_{1}(1/\lambda) & \mbox{if } |\lambda| > 1. \end{cases}$$
\end{Example}

\begin{Example}[$S \otimes S^{*}$]

\label{tpeg2}

Recall the representation of $S \otimes S^*$ as a block operator matrix with $S^*$ repeated on the subdiagonal acting on the Hilbert space
$$\H = \bigoplus _{k \geqslant 0} H^2.$$ One can show that
$$ \ker{S \otimes S^* } = \{ \delta ^{(k)} \} _{k = 0} ^\infty \quad \quad \delta _j ^{(k)} = \delta _{kj} 1 $$ and
$$ \ran {S \otimes S^* } = \{ \widehat{b}  ^{(k)} \} _{k=0} ^\infty \quad \quad \widehat{b} ^{(k)} _j = z^k \delta _{0j}. $$
We also need to calculate $\TRng (S \otimes S ^* - w I  ) ^\perp$.  For any $w \in \C \setminus \T$:
\begin{align*}
 \TRng (S \otimes S ^* - w I  ) ^\perp &= \ran{ (S \otimes S^* - w I ) P_0 } ^\perp\\
 & = \ker{ P_0 (S^* \otimes S - \overline{w} I ) },
 \end{align*} where $P_0 $ projects onto
 $$\H _0 = \bigoplus_{k \geqslant 1} H^2 _0 = \ker{S \otimes S^* } ^\perp.$$
Hence we need to determine the set of all $\mathbf{h} \in \H$ such that
$$ (S^* \otimes S - \overline{w} I ) \mathbf{h} = \begin{bmatrix} -\overline{w}  & S &  &  &  \\  & -\overline{w}  & S &  & \\ & & -\overline{w} &S  &  \\ & & & \ddots & \ddots  \\ & & & & \end{bmatrix} \begin{bmatrix} h_0 \\ h_1 \\ h_2 \\ \vdots \\ $ $  \end{bmatrix} =
\begin{bmatrix} c_0 1  \\ c_1 1 \\ c_2 1 \\ \vdots \\ $ $  \end{bmatrix}, $$ where $c_k \in \C$. This yields the recurrence relation $S h_{k+1} = \overline{w} h_k + c_k 1 $, and acting on both sides of this equation with $S^*$
yields $$ h_{k+1} = \overline{w} S^* h_k, \quad k \in \N \cup \{0 \}. $$ It follows that a basis for $ \TRng (S \otimes S ^* - w I  ) ^\perp $ is the set $\{ \mathbf{h} _k (w) \}$ where
$$ \mathbf{h} _k (w)  = ( z^k , \overline{w} z^{k-1} , ... , \overline{w} ^k 1 , 0 , ... ).$$ Although this is true for any $w \in \C \setminus \T$, in the case where $w \in \D_{e} = \C \setminus \D ^{-}$, we  instead choose
$$ \mathbf{g} _k (w) = \overline{w} ^{-k} \mathbf{h} _k (w) = \big( (z/\overline{w} ) ^k , (z / \overline{w} ) ^{k-1} , ... ,1 , 0 , ... \big) $$ as a basis for $\TRng (S \otimes S^* - w I ) ^\perp$.

A natural choice of model for $S \otimes S^*$ is then
$$ \Gamma (w) : =  \sum _{j =0 } ^\infty \gamma _j (w) \otimes \mathbf{e}_k, $$ where $\{ \mathbf{e}_k \}_{k \geqslant 0}$ is an orthonormal basis for $\C ^\infty := \ell ^2 ( \N \cup \{ 0 \} )$, and
$$ \gamma _j (w) =
\begin{cases}  \mathbf{h} _j (w) &\mbox{if } w \in \D ,\\
\mathbf{g} _j (w) &  \mbox{if } w \in \D_e. \end{cases}$$
\end{Example}

\section{The Liv\v{s}ic characteristic function}

What is a unitary invariant for the partial isometries? We begin with a result of Halmos and McLaughlin \cite{Halmos}.

\begin{Theorem}\label{HML}
Suppose $A, B \in M_{n}(\C)$ are partial isometric matrices with
$$\dim{\ker{A} }= \dim{\ker{B}} = 1.$$Then $A$ is unitarily equivalent to $B$ if and only if their characteristic polynomials coincide.
\end{Theorem}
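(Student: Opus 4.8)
The forward implication is immediate: unitarily equivalent matrices are similar, and the characteristic polynomial is a similarity invariant, so $A \cong B$ gives $p_A = p_B$.

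For the converse the plan is to separate the unitary and the completely non-unitary behavior and treat each with a different tool. Write $A = A_u \oplus A_c$ and $B = B_u \oplus B_c$, where $A_u, B_u$ are unitary and $A_c, B_c$ are completely non-unitary. Since a unitary summand is invertible, the one-dimensional kernel of $A$ lies entirely inside $A_c$; hence $\dim{\ker{A_c}} = 1$, and $A_c$ (likewise $B_c$) is a finite-dimensional completely non-unitary partial isometry of deficiency indices $(1,1)$, i.e. an element of $\VV_1$. By Proposition~\ref{CNU-Mn} every eigenvalue of $A_c$ lies in the open disk $\D$, while every eigenvalue of a unitary matrix lies on $\T$. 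Because $\D$ and $\T$ are disjoint, the splitting of a characteristic polynomial into its factor with roots in $\D$ and its factor with roots on $\T$ is unique; therefore the hypothesis $p_A = p_B$ forces $p_{A_u} = p_{B_u}$ and $p_{A_c} = p_{B_c}$ separately (and, comparing degrees, the two direct-sum decompositions have matching block sizes).

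The unitary pieces are disposed of by the spectral theorem: a unitary matrix is normal, hence is determined up to unitary equivalence by its eigenvalues and their multiplicities, i.e. by its characteristic polynomial. Thus $p_{A_u} = p_{B_u}$ yields $A_u \cong B_u$, and it remains only to prove $A_c \cong B_c$.

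This completely non-unitary step is the heart of the matter, and here I would invoke the Liv\v{s}ic characteristic function $w_{A_c}$, which by the theorem quoted in the introduction \cite{MR0113143} is a complete unitary invariant on $\VV_1$. Because $A_c$ is finite dimensional, $w_{A_c}$ is a finite Blaschke product; concretely, $A_c$ is unitarily equivalent to a compressed shift $S_{w_{A_c}}$ on the model space $\K_{w_{A_c}}$, with $w_{A_c}(0) = 0$ accounting for the one-dimensional kernel. The key computation — the step I expect to be the main obstacle — is that the zeros of $w_{A_c}$, counted with multiplicity, are precisely the eigenvalues of $A_c$ (for instance $S^{*}|_{\K_{w_{A_c}}}$ has eigenvalue $\overline{\lambda}$ with reproducing-kernel eigenvector $k^{w_{A_c}}_{\lambda}$ at each zero $\lambda$ of $w_{A_c}$). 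Granting this, $p_{A_c} = p_{B_c}$ forces $w_{A_c}$ and $w_{B_c}$ to have identical zero sets with multiplicities, so they agree up to a unimodular constant and hence coincide as characteristic functions; Liv\v{s}ic's theorem then gives $A_c \cong B_c$ (equivalently, $\K_{\gamma \Theta} = \K_\Theta$ and $S_{\gamma \Theta} = S_\Theta$ for $|\gamma| = 1$, so the two compressed shifts are literally equal). Assembling the factors, $A = A_u \oplus A_c \cong B_u \oplus B_c = B$, completing the converse.
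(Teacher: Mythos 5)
The paper does not actually prove Theorem \ref{HML}; it is quoted from Halmos and McLaughlin \cite{Halmos}, whose original argument is an elementary finite-dimensional one, so your proof is necessarily a different route. Your route is sound: the forward direction is trivial, the splitting $A = A_u \oplus A_c$ with the spectrum of $A_u$ on $\T$ and that of $A_c$ in $\D$ (Proposition \ref{CNU-Mn}) does force $p_{A_u} = p_{B_u}$ and $p_{A_c} = p_{B_c}$ separately, the spectral theorem handles the unitary blocks, and Liv\v{s}ic's theorem handles the completely non-unitary blocks once you know that two scalar characteristic functions coincide exactly when they agree up to a unimodular constant. What your argument buys is a conceptual explanation of \emph{why} the defect-one hypothesis matters (the Liv\v{s}ic function is scalar-valued, so ``coincidence'' collapses to equality of zero divisors), at the cost of invoking the full model-theoretic machinery for a statement Halmos and McLaughlin prove with bare hands.

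One point deserves care, and you correctly flag it as the main obstacle: you need that $w_{A_c}$ is a finite Blaschke product whose zeros, with multiplicity, are the eigenvalues of $A_c$. Note that the paper itself obtains this fact (in Example \ref{sdkfhjadskfjh}) \emph{as a consequence of} Theorem \ref{HML}, so you cannot borrow it from there without circularity. Your sketch does avoid the circle: Liv\v{s}ic's existence and uniqueness theorems give $A_c \cong S_{w_{A_c}}$ directly (using $w_{A_c}(0)=0$ and Example \ref{poiipoipoi}), finite-dimensionality of the model space forces $w_{A_c}$ to be a finite Blaschke product of degree equal to $\dim$ of the space, and the kernel computation $S^* k^{\Theta}_{\lambda} = \overline{\lambda}\, k^{\Theta}_{\lambda}$ at a zero $\lambda$ of $\Theta$ locates the eigenvalues. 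To finish you still need the multiplicities, which in $\K_{\Theta}$ requires exhibiting the generalized eigenvectors (derivatives of the reproducing kernel at a repeated zero of $\Theta$); since the total count of zeros and of eigenvalues both equal the dimension, even a one-sided containment of divisors would suffice. With that lemma supplied, the proof is complete and correct.
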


This theorem breaks down when the defect index is greater than one. Indeed, consider the following matrices:
\begin{equation}\label{ABHML}
A = \left[
\begin{array}{cccc}
 0 & 1 & 0 & 0 \\
 0 & 0 & 1 & 0 \\
 0 & 0 & 0 & 0 \\
 0 & 0 & 0 & 0 \\
\end{array}
\right], \qquad B = \left[
\begin{array}{cccc}
 0 & 1 & 0 & 0 \\
 0 & 0 & 0 & 0 \\
 0 & 0 & 0 & 1 \\
 0 & 0 & 0 & 0 \\
\end{array}
\right].
\end{equation}
From Proposition \ref{canonicalformPI}, we see that $A$ and $B$ are partial isometries with
$$\dim{\ker{A}} = \dim{\ker{B}} = 2.$$ Moreover, the characteristic polynomials of $A$ and $B$ are both equal to $z^4$. However, since $A$ and $B$ have different Jordan forms,  $A$ is not unitarily equivalent to $B$.

The replacement for Theorem \ref{HML} when the defect index is greater than one, and which works for general partial isometries on infinite dimensional Hilbert spaces, is due to Liv\v{s}ic \cite{ MR0113143}.
Let $V\in \mathscr{V}_{n}$ and let $\{\mathbf{v}_1, \ldots, \mathbf{v}_n\}$ be an orthonormal basis for $\ker{V}$. Since the deficiency indices of $V$ are equal, we know from Proposition \ref{has-ue} that $V$ has a unitary extension $U$ (in fact many of them). Define the following $n \times n$ matrix
\begin{equation}\label{Liv-Def}
w_{V}(z) = z [\langle (U - z I)^{-1} \mathbf{v}_{j}, \mathbf{v}_{k}\rangle] [\langle (U - z I)^{-1} U\mathbf{v}_{j}, \mathbf{v}_{k}\rangle]^{-1}, \quad z \in \D.
\end{equation}
Liv\v{s}ic showed that $w_V$ is a contractive analytic $M_{n}(\C)$-valued function on $\D$ and that different choices of basis $\{\mathbf{v}_1, \ldots, \mathbf{v}_n\}$ and unitary extension $U$ will change $w_{V}$ by $Q_{1} w_{V} Q_{2}$, where $Q_1, Q_2$ are constant unitary matrices. The function $w_V$, called the {\em Liv\v{s}ic characteristic function}, is a unitary invariant for $\mathscr{V}_{n}$.

\begin{Theorem}[Liv\v{s}ic]\label{LivsicThm}
If $V_1, V_2 \in \mathscr{V}_n$, then $V_{1}$ and $V_2$ are unitarily equivalent if and only if there are constant $n \times n$ unitary matrices $Q_1, Q_2$ such that
\begin{equation}\label{coincide}
w_{V_1}(z) = Q_1 w_{V_2}(z) Q_2 \quad \forall z \in \D.
\end{equation}
\end{Theorem}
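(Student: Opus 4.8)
The plan is to show that unitary equivalence of $V_1$ and $V_2$ corresponds exactly to the ambiguity already built into the definition of $w_V$ in \eqref{Liv-Def}. The proof splits naturally into two directions. For the easy direction, suppose $V_1$ and $V_2$ are unitarily equivalent, say $V_2 = W^* V_1 W$ for some unitary $W$. I would first check that $W$ carries unitary extensions of $V_1$ to unitary extensions of $V_2$: if $U$ extends $V_1$, then $W^* U W$ extends $V_2$. Since $W$ also maps $\ker{V_1}$ isometrically onto $\ker{V_2}$, an orthonormal basis $\{\mathbf{v}_j\}$ of $\ker{V_1}$ transforms to an orthonormal basis $\{W^*\mathbf{v}_j\}$ of $\ker{V_2}$. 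Substituting into \eqref{Liv-Def} and using $(W^*UW - zI)^{-1} = W^*(U - zI)^{-1}W$, the unitary $W$ cancels in each inner product, so the matrices computed for $V_2$ (with this basis and extension) equal those for $V_1$. Thus $w_{V_2}$ computed with these choices equals $w_{V_1}$, and by the stated fact that changing the basis and extension alters $w_V$ only by $Q_1 w_V Q_2$, we obtain \eqref{coincide} for appropriate constant unitaries.

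The harder direction is the converse: \eqref{coincide} implies unitary equivalence. Here I would exploit the abstract model space machinery of Section \ref{absmodel}. The characteristic function $w_V$ encodes, via the resolvent of a unitary extension, precisely the data $\Gamma(\lambda)^*\Gamma(w)$ that determines the reproducing kernel $k^{\mathscr{H}}_w$ of $\HH_{V,\Gamma}$. The strategy is to show that the coincidence relation \eqref{coincide} forces the two reproducing kernel Hilbert spaces $\HH_{V_1}$ and $\HH_{V_2}$ to be related by a constant unitary change of the $\C^n$-fiber (coming from $Q_1, Q_2$), hence unitarily equivalent as spaces in a way that intertwines the respective operators $\ZZZ_{V_1}$ and $\ZZZ_{V_2}$. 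Because each $V_i|_{\ker{V_i}^\perp} \cong \widehat{\ZZZ}_{V_i}$ by the construction, and the partial-isometric extension is canonically recovered (as in the final Remark of Section \ref{absmodel}), an intertwining unitary between the model spaces yields $V_1 \cong V_2$. Concretely, I would build the unitary directly: define $W$ on a total set by sending $k^{\mathscr{H}_1}_w(\cdot)\mathbf{a}$ to the corresponding kernel vector in $\HH_{V_2}$ twisted by the constant matrices $Q_1,Q_2$, verify it preserves inner products using the matched kernels, and check it extends to a unitary intertwining the two multiplication operators.

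The main obstacle I anticipate is the bookkeeping that converts the matrix identity \eqref{coincide} into an honest statement about the two reproducing kernels. The Liv\v{s}ic function is a ratio of two matrix-valued resolvent expressions, and one must unwind this to recover the kernel $\Gamma(z)^*\Gamma(w)$ on all of $\C\setminus\T$, not merely on $\D$ where $w_V$ lives; this requires relating the values of $w_V$ in $\D$ to the model data in both $\D$ and $\D_e$ through the unitary extension $U$ and the reflection symmetry that the kernel inherits. A cleaner alternative, which I would pursue if the direct kernel computation becomes unwieldy, is to invoke the classical functional-model correspondence: the map $V \mapsto w_V$ is known (from Liv\v{s}ic's original work and the references \cite{MR0244795,MR0215065,MR875237}) to be a complete unitary invariant, so one can cite that $w_V$ determines $V$ up to the stated unitary freedom and simply verify the two directions against the normalization $Q_1 w_V Q_2$. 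Given that the excerpt presents this as a known theorem of Liv\v{s}ic, I expect the intended proof to be a short reduction to that established functional model rather than a from-scratch construction.
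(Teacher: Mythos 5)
The paper does not actually prove this theorem: it is stated with an attribution to Liv\v{s}ic and the citation \cite{MR0113143}, and is used as a black box thereafter. So there is no internal proof to compare against, and your closing guess --- that the intended ``proof'' is a reduction to the established functional model --- is exactly right. Judged on its own terms, your forward direction is complete and correct: conjugating a unitary extension $U$ of $V_1$ by the intertwining unitary $W$ produces a unitary extension of $V_2$, the resolvent identity $(W^*UW - zI)^{-1} = W^*(U-zI)^{-1}W$ makes the two matrices in \eqref{Liv-Def} literally equal for the transported basis, and the remaining freedom is absorbed by the (stated, and also unproved in the paper) fact that changing the basis and the extension perturbs $w_V$ only by constant unitaries on either side.

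The converse, however, is a plan rather than a proof. The step you defer --- converting $w_{V_1} = Q_1 w_{V_2} Q_2$ into a unitary intertwining of the two models --- is precisely the content of the theorem, and the paper's own machinery in Section \ref{Herglotzspace} is what would discharge it: there each $V$ is shown unitarily equivalent to the canonical operator $\ZZZ_b$ acting on the Herglotz space $\mathscr{L}(b)$ with $b = w_V$, so the converse reduces to showing $\ZZZ_{b_1} \cong \ZZZ_{b_2}$ whenever $b_1 = Q_1 b_2 Q_2$. That last step is a genuine, if short, computation: one checks that the kernels satisfy $\bigl(I - b_1(z)b_1(w)^*\bigr)/(1-z\overline{w}) = Q_1 \bigl[\bigl(I - b_2(z)b_2(w)^*\bigr)/(1-z\overline{w})\bigr] Q_1^*$, so that multiplication by the constant unitary $Q_1$ carries one model onto the other and commutes with multiplication by $z$; one must also verify consistency with the extension of the kernel to $\D_{e}$ via $b(\lambda)\,b(1/\overline{\lambda})^* = I$, and note that complete non-unitarity is what makes the model faithful (it guarantees $\bigvee_\lambda \ran{\Gamma(\lambda)} = \H$, hence $V \cong \ZZZ_{w_V}$ rather than only some piece of $V$). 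None of this is difficult, but as written your proposal names the obstacle without removing it, so the hard direction remains open in your write-up.
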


Two $M_n (\C)$-valued contractive analytic functions $w_1, w_2$ on $\D$ are said to {\em coincide} if that satisfy \eqref{coincide}.  Liv\v{s}ic also showed that given any contractive, analytic, $M_n(\C)$-valued, function $w$ on $\D$ with $w(0) = 0$, then there is a $V \in \mathscr{V}_n$ such that $w_{V} = w$. One can quickly check that \eqref{coincide} defined an equivalence relation on such matrix-valued functions. In other words, there is a bijection from unitary equivalence classes of partial isometries with indices $(n,n)$ onto the unitary coincidence equivalence classes of contractive analytic $M_n (\C)$-valued analytic functions on $\D$ which vanish at zero.

Using the definition above to compute $w_V$ can be difficult. However, if one reads Liv\v{s}ic's paper carefully, there is an alternate way of computing $w_V$ \cite{Livsic}.

\begin{Proposition}\label{cooltrick}
For $V \in \mathscr{V}_n$, let $\{g_1, \ldots, g_n\}$ be an orthonormal basis for $\ker{V}$ and let $\{h_1, \ldots h_n\}$ be an orthonormal basis for $\TRng(V)^{\perp}$. For each $z \in \D$ let $\{g_1(z), \ldots, g_n(z)\}$ be a (not necessarily orthonormal) basis for $\TRng(V - z I)^{\perp}$. Then
$$w_{V}(z) = z  [\langle h_j, g_k (z) \rangle]^{-1} [\langle g_j, g_k(z)\rangle].$$
\end{Proposition}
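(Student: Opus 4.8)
The plan is to transform Liv\v{s}ic's defining formula \eqref{Liv-Def} into the stated intrinsic expression, using as the bridge the range identity from the preceding (Liv\v{s}ic) proposition, namely
$$\TRng(V-zI)^{\perp} = \TRng(V_z)^{\perp} = (I - \overline{z} U)^{-1}\TRng(V)^{\perp}$$
for a fixed unitary extension $U$ of $V$ and $z \in \D$. This says that $(I - \overline{z} U)^{-1}$ carries $\TRng(V)^{\perp}$ isomorphically onto $\TRng(V - zI)^{\perp}$, so that applying it to the orthonormal basis $\{h_1,\dots,h_n\}$ of $\TRng(V)^{\perp}$ produces a concrete basis $\{(I-\overline{z} U)^{-1} h_k\}$ of $\TRng(V-zI)^{\perp}$.

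First I would reduce to a convenient basis. Since $U$ is unitary and maps $\ker{V}$ onto $\ran{V}^{\perp} = \TRng(V)^{\perp}$, I may take the orthonormal basis $\{\mathbf{v}_1,\dots,\mathbf{v}_n\}$ of $\ker{V}$ used in \eqref{Liv-Def}, set $g_k := \mathbf{v}_k$ and $h_k := U\mathbf{v}_k$, and choose $g_k(z) := (I - \overline{z} U)^{-1} h_k$ as the basis of $\TRng(V-zI)^{\perp}$. Substituting these into the two Gram-type matrices $[\ip{h_j}{g_k(z)}]$ and $[\ip{g_j}{g_k(z)}]$, I would push the operator $(I - \overline{z} U)^{-1}$ and the factors of $U$ across the inner product by taking adjoints, and use the resolvent identities valid for unitary $U$,
$$(U - zI)^{-1} = (I - zU^{*})^{-1}U^{*}, \qquad (U - zI)^{-1}U = (I - zU^{*})^{-1},$$
together with $U^{*}U = I$ and the commutativity of functions of $U$. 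This is exactly the computation that turns the two brackets into the numerator and denominator matrices $[\ip{(U-zI)^{-1}U\mathbf{v}_j}{\mathbf{v}_k}]$ and $[\ip{(U-zI)^{-1}\mathbf{v}_j}{\mathbf{v}_k}]$ appearing in \eqref{Liv-Def}, so that the stated expression reproduces $w_V(z)$ (the matrix that is inverted is invertible on $\D$ precisely because $w_V$ is well defined there).

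Finally I would remove the restriction to this special basis. Replacing $\{g_k(z)\}$ by an arbitrary basis $\{\sum_m T_{mk}\, g_m(z)\}$ of $\TRng(V-zI)^{\perp}$, with $T$ invertible, right-multiplies both bracket matrices by the same invertible matrix, so the product $[\ip{h_j}{g_k(z)}]^{-1}[\ip{g_j}{g_k(z)}]$ is unchanged up to similarity by $T$; when the new basis is orthonormal this is a unitary similarity, that is, a genuine coincidence, matching the ambiguity already inherent in $w_V$. Hence it suffices to verify the identity for the single natural basis above.

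The main obstacle is bookkeeping rather than ideas. Because the inner product is conjugate-linear in its second slot, each bracket $[\ip{x_j}{y_k}]$ is the transpose of the corresponding operator Gram matrix, and it is this transposition that fixes the order of the matrix product and the placement of the inverse in the final formula. Keeping track of the adjoints, of the slot on which each operator identity for $U$ is applied, and of the row/column convention consistently throughout is the delicate point; once these conventions are pinned down, the algebra collapses directly onto \eqref{Liv-Def}.
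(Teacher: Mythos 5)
The paper itself offers no proof of Proposition~\ref{cooltrick} --- it is stated with a pointer to Liv\v{s}ic's original paper --- so there is no in-house argument to compare against. Your strategy is certainly the intended one, and your central computation is correct: with $g_k=\mathbf{v}_k$, $h_k=U\mathbf{v}_k$ and $g_k(z)=(I-\overline{z}U)^{-1}h_k$, the identities $[(I-\overline{z}U)^{-1}]^*=(I-zU^*)^{-1}=(U-zI)^{-1}U$ and the unitarity of $U$ do give
$\ip{h_j}{g_k(z)}=\ip{(U-zI)^{-1}U\mathbf{v}_j}{\mathbf{v}_k}$ and $\ip{g_j}{g_k(z)}=\ip{(U-zI)^{-1}\mathbf{v}_j}{\mathbf{v}_k}$, which are exactly the two matrices of \eqref{Liv-Def}.

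Two soft spots remain, both invisible when $n=1$ (the only case the paper actually uses) but genuine for $n>1$. First, the ordering: your computation lands on $z\,[\ip{g_j}{g_k(z)}]\,[\ip{h_j}{g_k(z)}]^{-1}$, i.e.\ $zAB^{-1}$ in the notation of \eqref{Liv-Def}, whereas the Proposition asserts $zB^{-1}A$. These are conjugate by the $z$-dependent, non-unitary matrix $B(z)$, so they need not coincide in the sense of \eqref{coincide}; and the transposition of the Gram matrices that you invoke does not by itself repair this, since $(AB^{-1})^{T}=(B^{T})^{-1}A^{T}$ produces $w_V(z)^{T}$, not $w_V(z)$. (The paper is itself inconsistent here: \eqref{Liv-Def}, the Proposition, and the reformulation $w_V=zA(z)^{-1}B(z)$ of \eqref{Livchar} exhibit three different-looking orderings.) You should either settle on one index convention and carry it through, or state the conclusion as an equality up to coincidence/transpose. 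Second, the reduction to the canonical basis: replacing $\{g_k(z)\}$ by $\{\sum_m T_{mk}(z)g_m(z)\}$ conjugates the product by $\overline{T(z)}$, and since $T$ may depend on $z$ and need not be unitary, this is \emph{not} a coincidence for $n>1$; moreover, orthonormality of the new basis does not make $T$ unitary, because your reference basis $\{(I-\overline{z}U)^{-1}h_k\}$ is not orthonormal. So for $n>1$ the formula is basis-sensitive, and the "it suffices to verify it for one natural basis" step needs either a restriction on the admissible bases or an explicit statement that the formula determines $w_V$ only up to such a similarity. For $n=1$ all of these matrices are scalars and every step of your argument closes up correctly.
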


  The construction above can be rephrased as follows.  For any $z \in \C \setminus \T$, let $$ j_z : \C ^n \rightarrow \TRng (V - zI ) ^\perp, $$ be an isomorphism. Furthermore, suppose that $j_0$ is a surjective isometry and let $$ j = j_\infty : \C ^n \rightarrow \ker{V} ^\perp, $$ also be a surjective isometry. The Liv\v{s}ic characteristic function of $V$ is then
  \be w _V (z) = z A(z) ^{-1} B(z), \label{Livchar},  \ee where $A(z) := j_z ^* j_0$ and $B(z) := j_z ^* j_\infty$.


\begin{Example}\label{poiipoipoi}
Suppose $\Theta$ is a scalar-valued inner function with $\Theta(0) = 0$. Note that
$$\ker{S_{\Theta}} = \C \frac{\Theta}{z}, \qquad \ker{S_{\Theta}}^{\perp} = \dom{\widehat{M} _{\Theta}},$$
$$\TRng(S_{\Theta}) = z \dom{\widehat{M} _{\Theta}}, \qquad \TRng(S_{\Theta})^{\perp} = \C.$$ In the formula for $w_V$ in the previous proposition, we get
$g = \Theta/z$ and $h = 1$.
In a similar way we have
$$\TRng(S_{\Theta} - z I) = (w - z) \dom{\widehat{M} _{\Theta} - z I},  \qquad \TRng(S_{\Theta} - z I)^{\perp} = \C k^{\Theta}_{z}.$$
Thus take $g(z) = k^{\Theta}_{z}$ and note that $z \mapsto g(z)$ is anti-analytic. From here, one can show that
$w_{V}(z) = \Theta(z)$.
\end{Example}

\begin{Example}\label{sdkfhjadskfjh}
If $A \in M_{n}(\C)$, $\dim{\ker{A}} = 1$, and
$\sigma(A) = \{0, \lambda_1, \ldots, \lambda_{n - 1}\} \subset \D$, then we know from Proposition \ref{CNU-Mn} that $A \in \mathscr{V}_{1}$. Furthermore, if $\Theta$ is a Blaschke product whose zeros are $\sigma(A)$, then a well-known fact is that $\sigma_{p}(S_{\Theta}) = \sigma(A)$. By Halmos-McLaughlin (Theorem \ref{HML}), $S_{\Theta}$ is unitarily equivalent to $A$ and thus $w_{A} = \Theta$.
\end{Example}

\begin{Example}
For the two matrices $A$ and $B$ from \eqref{ABHML} one can easily compute unitary extensions for $A$ and $B$. Using the definition of $w_A$ and $w_B$ from \eqref{Liv-Def} we get
$$w_{A}(z) = \left[
\begin{array}{cc}
 z & 0 \\
 0 & z^3 \\
\end{array}
\right], \qquad w_{B} = \left[
\begin{array}{cc}
 z^2 & 0 \\
 0 & z^2 \\
\end{array}
\right].$$
From here one can show there are no unitary matrices $Q_1, Q_2$ such that $w_{A}(z) = Q_1 w_{B}(z) Q_2$ for all $z \in \D$.  Indeed, if there were, then $$ |z| = \| w_A (z) \| = \| w_B(z) \|  = |z|^2, \quad z \in \D,$$ which is impossible.
\end{Example}

\begin{Example}
Recall the operator $S^{*} \oplus S$ from Example \ref{SsPS} where we showed that
$$\ker{S^{*} \oplus S} = \C \oplus \{0\} ,\qquad \ker{S^{*} \oplus S}^{\perp} = H_{0}^{2} \oplus H^2.$$
Thus
$$\TRng(S^{*} \oplus S) = S^{*}|_{H_{0}^{2}} \oplus S, \qquad \TRng(S^{*} \oplus S)^{\perp} = \{0\} \oplus \C.$$ In the formula from Proposition \ref{cooltrick} we can take
$g = 1 \oplus 0$ and $h = 0 \oplus 1$.
Notice that
\begin{align*}
\TRng(S^{*} \oplus S - z I) & = (S^{*} - z I) \oplus (S - z I)|_{\ker{S^{*} \oplus S}}\\
& = (S^{*} - z I)|_{H_{0}^{2}} \oplus (S - z I)\\
& = H^2 \oplus (w - z) H^2.
\end{align*}
If $c_{z}(w)$ is the standard Cauchy kernel for $H^2$ we see that
$$\TRng(S^{*} \oplus S - z I)^{\perp} = 0 \oplus c_z.$$
So in Proposition \ref{cooltrick} we can take
$g(z) = 0 \oplus c_z$.
A computation yields
\begin{align*}
w_{S^{*} \oplus S}(z) = z \frac{\langle g, g(z) \rangle}{\langle h, g(z)\rangle}
= z \frac{\langle 1 \oplus 0, 0 \oplus c_z\rangle}{\langle 0 \oplus 1, 0 \oplus c_z\rangle}
 = 0.
\end{align*}

\end{Example}

\begin{Example}
  To calculate the characteristic function of $S \otimes S^*$, it is perhaps easiest to consider the block operator representation from  \eqref{blockop}. In this case,
given $\mathbf{h} = (h_0, h_1 , ... ) \in \H $ where $h_k \in H^2$, we have
$$ (S\otimes S^*) \mathbf{h} = \begin{bmatrix} 0 & &  &  &  \\  S^* & 0 &  &  & \\ & S^* & 0 &  &  \\ & & \ddots & \ddots &  \\ & & & & \end{bmatrix} \begin{bmatrix} h_0 \\ h_1 \\ h_2 \\ \vdots \\ $ $  \end{bmatrix} =
\begin{bmatrix} 0 \\ S^* h_0 \\ S^* h_1 \\ \vdots \\  $ $  \end{bmatrix}, $$ so that an orthonormal basis of $\ker{S \otimes S^*}$ is $\{ \delta ^{(k)} \} _{k=0} ^\infty$
where $\delta ^{k} _j = \delta _{kj} 1. $ Note that
$$\ker{S\otimes S^* } ^\perp = \H _0 := \bigoplus _{k \geqslant 0} H^2 _0,$$ in which $H^2 _0 = \{f \in H^2: f(0) = 0\}$.
Similarly
$$ (S^* \otimes S) \mathbf{h} = \begin{bmatrix} 0 & S &  &  &  \\  & 0 & S &  & \\ & & 0 &S  &  \\ & & & \ddots & \ddots  \\ & & & & \end{bmatrix} \begin{bmatrix} h_0 \\ h_1 \\ h_2 \\ \vdots \\ $ $  \end{bmatrix} =
\begin{bmatrix} Sh_1  \\ S h_2 \\ S h_3 \\ \vdots \\ $ $  \end{bmatrix}, $$ so that
$$\ran{S \otimes S ^* } ^\perp = \ker{S^* \otimes S } = H^2 \bigoplus _{k \geqslant 1} \{ 0 \}$$ has orthonormal basis
$ \{  \widehat{b}  ^{(k)}  \} _{k=0} ^\infty,$ where $\{ b_k \} _{k=0} ^\infty$ is the standard basis of $H^2$, $b_k (z) = z ^k $, and $\widehat{b} ^{(k)} _j = b_k \delta _{0j}$.

A calculation yields
$$ \TRng (S \otimes S ^* - wI ) ^\perp = \{ \mathbf{h} _k (w) \}, \quad \mathbf{h} _k (w)  = ( z^k , \overline{w} z^{k-1} , ... , \overline{w} ^k 1 , 0 , ... ).$$

Putting this together, $w_{S\otimes S^* } (z) = z A(z) ^{-1} B(z)$ where
$$ A(z) = [ \ip{\hat{b} ^{(k)} }{\mathbf{h} _j (z) } ] = [ \ip{z^k}{z^j}_{H^2}  = \delta _{kj} ] = I, $$ and
$$ B(z) = [ \ip{\delta ^{(k)}}{\mathbf{h} _j (z)} ] = [\ip{1}{\overline{z} ^k 1} _{H^2 } \delta _{kj} ] = [ z^k \delta _{kj} ], $$ so that
$$ w_{S\otimes S^*} (z) = \begin{bmatrix} z & & & &  \\ & z^2 & & & \\  &  & z^3 & & \\  & & & \ddots & \\ & & & & \end{bmatrix}. $$

\end{Example}

\section{Herglotz spaces}
\label{Herglotzspace}

There is a canonical choice of abstract model space for operators from $\VV_n$ called a Herglotz space. A $M_{n}(\C)$-valued analytic function on $\D$ is called a {\em Herglotz function} if
$$\Re G(z) := \frac{1}{2} (G(z) + G(z)^{*}) \geqslant 0.$$  There is a bijective correspondence between
$M_n(\C)$-valued Herglotz functions $G$ on $\D$ and $M_n(\C)$-valued contractive analytic functions $b$ on $\D$ given by:
$$ b \mapsto G_b := (I + b)(I - b)^{-1} \quad \mbox{and} \quad G \mapsto b _G := (G - I)(G + I)^{-1}. $$ Any Herglotz function on $\D$  extends to a function on $\C \setminus \T$ by
$$ G (1 / \lambda ) := - G(\overline{\lambda} ) ^*, $$ which ensures that $G$ has non-negative real part on $\C \setminus \T$. Note that if $G = G _b$ for a contractive analytic function $b$, then it follows that $b $ can
be extended to a meromorphic function on $\C \setminus \T$ which obeys $$ b (\lambda )  b (1 / \overline{\lambda} ) ^* =I.$$

Given any contractive analytic $M_n(\C)$-valued function $b$ on $\D$, consider the positive matrix kernel function
$$ K_w (z) := \frac{G_b (z) + G _b (w) ^*}{1 - z \overline{w} }, \quad \quad z,w \in \C \setminus \T. $$
By the abstract theory of reproducing kernel Hilbert spaces \cite{Paulsen-rkhs}, it follows that
there is a reproducing kernel Hilbert space of $\C ^n$-valued functions on $\C \setminus \T$ with reproducing kernel $K_w(z)$. This RKHS is denoted by $\mathscr{L} (b)$ and is
called the
{\em Herglotz space} corresponding to $b$.

\begin{Theorem}
Let $V \in \mathscr{V}_n(\H)$ with characteristic function $w_V = b$ and suppose that $\Gamma$ is an abstract model for $V$. Then there is an isometric multiplier from the abstract model space $\HH_{V, \Gamma}$ onto the Herglotz space $\mathscr{L} (b)$. More precisely, there is an analytic $M_{n}(\C)$-valued function $W$ on $\C \setminus \T$ such that $W \HH_{V, \Gamma} = \mathscr{L}(b)$ and
$$\|W f\|_{\mathscr{L}(b)} = \|f\|_{\HH_{V, \Gamma}}, \quad f \in \HH_{V, \Gamma}.$$
\end{Theorem}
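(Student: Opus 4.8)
The plan is to realize the Herglotz space $\mathscr{L}(b)$ itself as an abstract model space $\HH_{V,\Gamma_0}$ for a particular Liv\v{s}ic model $\Gamma_0$, and then to appeal to the principle recorded just after the construction of abstract model spaces—that two model spaces for the same $V$ satisfy $\HH_{V,\Gamma_0}=\Theta\,\HH_{V,\Gamma}$—which I would sharpen to say that $\Theta$ is an isometric, onto multiplier. Since a reproducing kernel Hilbert space is determined by its kernel, it suffices to choose $\Gamma_0$ so that the kernel $\Gamma_0(z)^{*}\Gamma_0(w)$ of $\HH_{V,\Gamma_0}$ equals the Herglotz kernel $K_w(z)=(G_b(z)+G_b(w)^{*})/(1-z\overline{w})$; this forces $\HH_{V,\Gamma_0}=\mathscr{L}(b)$ with equal norms, and composing with the model-to-model multiplier $\HH_{V,\Gamma}\to\HH_{V,\Gamma_0}$ produces the desired $W$.

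For the abstract principle, recall that $f\mapsto\widehat f_{\Gamma}$ and $f\mapsto\widehat f_{\Gamma_0}$ are unitaries from $\H$ onto $\HH_{V,\Gamma}$ and $\HH_{V,\Gamma_0}$. Their composition is a unitary $\HH_{V,\Gamma}\to\HH_{V,\Gamma_0}$, and since $\Gamma(\lambda)$ and $\Gamma_0(\lambda)$ are both isomorphisms of $\C^n$ onto $\TRng(V-\lambda I)^{\perp}$, one checks that it acts as multiplication by the analytic matrix function $W(\lambda)=\Gamma_0(\lambda)^{*}\Gamma(\lambda)\,[\Gamma(\lambda)^{*}\Gamma(\lambda)]^{-1}$; that is, $\Gamma_0(\lambda)^{*}f=W(\lambda)\,\Gamma(\lambda)^{*}f$ for every $f\in\H$, with both sides vanishing on $\TRng(V-\lambda I)$. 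Being a unitary realized as multiplication by $W$, this map is precisely an isometric multiplier that is onto.

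The substance is the kernel computation. Fix a unitary extension $U$ of $V$; since $U$ carries $\ker{V}=\mathfrak{D}_{+}(V)$ unitarily onto $\ran{V}^{\perp}=\mathfrak{D}_{-}(V)$, I would pick a surjective isometry $j_\infty:\C^n\to\mathfrak{D}_{+}(V)$ and set $j_0:=U j_\infty:\C^n\to\mathfrak{D}_{-}(V)$, also a surjective isometry. Using the formula \eqref{Livchar} for $w_V$ with $j_z:=(I-\overline z U)^{-1}j_0$, so that $A(z)=j_0^{*}(I-zU^{*})^{-1}j_0$ and $B(z)=j_0^{*}(I-zU^{*})^{-1}j_\infty$, the normalization $j_\infty=U^{*}j_0$ together with $j_0^{*}j_0=I$ gives the key collapse
\[
A(z)-zB(z)=j_0^{*}(I-zU^{*})^{-1}(I-zU^{*})j_0=I .
\]
Hence $I-w_V=A^{-1}(A-zB)=A^{-1}$ and $I+w_V=A^{-1}(2A-I)$, so
\[
G_b(z)=(I+w_V)(I-w_V)^{-1}=2A(z)-I=j_0^{*}(I+zU^{*})(I-zU^{*})^{-1}j_0 .
\]
Writing $\phi(z):=(I+zU^{*})(I-zU^{*})^{-1}$ and using $UU^{*}=I$, a one-line resolvent manipulation gives $\phi(z)+\phi(w)^{*}=2(1-z\overline w)(I-zU^{*})^{-1}(I-\overline w U)^{-1}$, whence
\[
K_w(z)=\frac{G_b(z)+G_b(w)^{*}}{1-z\overline w}=2\,j_0^{*}(I-zU^{*})^{-1}(I-\overline w U)^{-1}j_0=\Gamma_0(z)^{*}\Gamma_0(w),
\]
with $\Gamma_0(\lambda):=\sqrt 2\,(I-\overline\lambda U)^{-1}j_0$, a legitimate Liv\v{s}ic model because $\sqrt2\,j_0$ is an isomorphism onto $\mathfrak{D}_{-}(V)$. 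Thus $\HH_{V,\Gamma_0}$ and $\mathscr{L}(b)$ share a reproducing kernel and therefore coincide.

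I expect the main obstacle to be this identification rather than the abstract principle: one must choose the normalization $j_0=Uj_\infty$ exactly so that $A-zB$ collapses to $I$, and one must account for the fact that $w_V$, and hence $\mathscr{L}(b)$, is defined only up to the coincidence $b\mapsto Q_1 b Q_2$—different normalizations replace $b$ by an equivalent function whose Herglotz space differs by a constant unitary multiplier, so the statement should be read for a compatible representative. The remaining points are bookkeeping: that $\Gamma_0$ is anti-analytic and onto $\TRng(V-\lambda I)^{\perp}$ (already known for models of this form) and that equality of kernels upgrades to equality of Hilbert spaces with equal norms.
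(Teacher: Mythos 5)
Your proposal is correct, but it reaches the theorem by a genuinely different route than the paper. The paper first derives a general formula for the reproducing kernel of an \emph{arbitrary} model space, $k_w^\Gamma(z) = \bigl(A(z)A(w)^* - zB(z)B(w)^*\overline{w}\bigr)/(1-z\overline{w})$, by an operator-theoretic computation (letting $\ZZZ_{\Gamma}^*$ act on kernel functions and using the two expressions for it as an adjoint and as multiplication by $1/z$ on $\ran{\ZZZ_\Gamma}$), and then algebraically factors the Herglotz kernel as $W(z)k_w^\Gamma(z)W(w)^*$ with the explicit multiplier $W(z)=\sqrt{2}(A(z)+zB(z))^{-1}$. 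You instead bypass the general kernel formula entirely: you build one \emph{normalized} Liv\v{s}ic model $\Gamma_0(\lambda)=\sqrt{2}(I-\overline{\lambda}U)^{-1}j_0$ with $j_0=Uj_\infty$, exploit the collapse $A(z)-zB(z)=I$ to get $G_b=2A-I$, and verify by a clean resolvent identity that $\Gamma_0(z)^*\Gamma_0(w)$ \emph{is} the Herglotz kernel, so $\HH_{V,\Gamma_0}=\mathscr{L}(b)$ on the nose; the passage from an arbitrary $\HH_{V,\Gamma}$ is then delegated to the change-of-model principle, which you correctly sharpen from the paper's unproved remark ($\HH_{V,\Gamma'}=\Theta\HH_{V,\Gamma}$) to an isometric onto multiplier realized as $W(\lambda)=\Gamma_0(\lambda)^*\Gamma(\lambda)[\Gamma(\lambda)^*\Gamma(\lambda)]^{-1}$. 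What your route buys is a more elementary and verifiable kernel computation and a conceptual explanation of \emph{why} $\mathscr{L}(b)$ is the canonical model (it is itself a model space for a normalized choice of $j_0,j_\infty$); what the paper's route buys is an explicit closed form for the multiplier in terms of the given $\Gamma$ and the general kernel formula \eqref{absrk}, which it reuses later. Two small points you should tighten: the analyticity of your $W(\lambda)$ is not visible from its formula (both factors are built from anti-analytic data) and should be deduced from non-degeneracy of the kernels via $k_w^{\Gamma_0}(z)=W(z)k_w^\Gamma(z)W(w)^*$; and the dependence of $\mathscr{L}(b)$ on the coincidence representative $b\mapsto Q_1bQ_2$, which you flag, is genuinely present but is equally present in the paper's proof, since its $b(z)=zA(z)^{-1}B(z)$ also depends on the choices of $j_0$ and $j_\infty$.
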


\begin{proof}
 We will prove this by adapting the approach of \cite[Section 4]{AMR} to prove the following formula for the reproducing kernel $k_w ^\Gamma (z)$ of $\HH _{V, \Gamma}$
\begin{equation}\label{absrk}
 k_w ^\Gamma (z)  =  \frac{A (z) A (w ) ^* - z B(z) B(w) ^* \overline{w} }{1 - z \overline{w} },
 \end{equation}
  where $$ A (z) := j_z ^* j_0 \quad \mbox{and} \quad  B(z) := j_z ^* j,$$
and the maps $j_z , j : \C ^n \rightarrow \H $ are as defined before formula (\ref{Livchar}). Namely, $$ j_z : \C ^n \rightarrow \TRng (V - zI ) ^\perp, $$ is an isomorphism such that $z \mapsto j_z$ is
anti-analytic, $j_0$ is a surjective isometry and $$ j = j_\infty : \C ^n \rightarrow \ker{V} ^\perp, $$ is also an onto isometry.

To prove \eqref{absrk}, consider the abstract model space $\HH _{V, \Gamma}$ for the model
$\Gamma$ of $V$. The reproducing kernel is
$$k_w  (z) = \Gamma (z) ^* \Gamma (w).$$ Now for any $\mathbf{u} , \mathbf{v} \in \C ^n$, if $Q_\infty$ denotes the projection of $\H$ onto $\ker{\ZZZ _{V, \Gamma }} ^\perp$ and $Q_0$ denotes the projection of $\H$ onto
$\ran{\ZZZ _{V, \Gamma } }$, then
\begin{align*}
 \left( (\ZZZ _\Gamma  ^* k_w ) (z) \mathbf{u} , \mathbf{v} \right) _{\C ^n} & =  \ip{ \ZZZ _\Gamma  ^* k_w \mathbf{u} }{k_z \mathbf{v} } _\Gamma \nonumber  = \ip{k_w \mathbf{u}}{ \ZZZ _\Gamma  Q_\infty  k_z \mathbf{v} } _\Gamma  \nonumber \\
& =  \overline{\ip{ \ZZZ _\Gamma  Q_\infty k_z \mathbf{v}}{k_w \mathbf{u} } _\Gamma } \nonumber \\
& = \overline{w} \left[  \left( k_w (z) \mathbf{u} , \mathbf{v} \right) _{\C ^n}  - \left( (P_\infty k_w ) (z) \mathbf{u} , \mathbf{v} \right) _{\C ^n } \right], \nonumber
\end{align*}
where $P_\infty = I - Q _\infty $. However, $\ZZZ _\Gamma  ^*$ also acts as multiplication by $1/z$ on $\ran{\ZZZ _\Gamma }$ and so
\begin{align*}
  \left( (\ZZZ _\Gamma  ^* k_w ) (z) \mathbf{u} , \mathbf{v} \right) _{\C ^n} & =  \frac{1}{z} \ip{Q_0  k_w \mathbf{u}}{ k_z \mathbf{v} } _\Gamma \nonumber \\
& =\frac{1}{z} \left[ \left( k_w (z) \mathbf{u} , \mathbf{v} \right) _{\C ^n } - \left( (P_0 k_w ) (z)  \mathbf{u} , \mathbf{v} \right)  _{\C ^n } \right], \nonumber
\end{align*}
where $P_0 = I - Q_0$. Equating these two expressions yields
$$ k_w (z) = \frac{(P_0 k_w ) (z) - z\overline{w} (P_\infty k_w ) (z) }{1-z \overline{w} }. $$ Now define surjective isometries $j _0 : \C ^n \rightarrow \ran{V} ^\perp$ and $j_\infty : \C ^n \rightarrow \ker{V}$,
and let $j_z = \Gamma (z)$ for any $z \in \C \setminus \T$, $z \neq 0$. Observe that if $U_\Gamma : \H \rightarrow \HH _{V, \Gamma}$ is the unitary transformation onto the abstract model space given by
$ U_\Gamma f (z) = \Gamma (z ) ^* f, $ then $$ P _0 = U_\Gamma j_0 j_0 ^* U _\Gamma ^* $$ and $$ k_w \mathbf{u}  = U _\Gamma j _w \mathbf{u} $$  for any $\mathbf{u} \in \C ^n$. It follows that
\begin{align*}
  (P_0 k_w) (z) & =   j_z ^* U_\Gamma ^* U _\Gamma j_0 j_0 ^* U _\Gamma ^* U_\Gamma j_w \nonumber \\
& =  j_z ^* j_0 j_0 ^* j_w  =  A(z) A(w) ^*, \nonumber
\end{align*} and similarly
$$ (P_\infty k_w )(z) = j_z ^* j_\infty j_\infty ^* j_w = B(z) B(w) ^*, $$ so that the reproducing kernel for $\HH _\Gamma $ takes the form in \eqref{absrk} as claimed.

To obtain the isometric multiplier $W$ from $\HH_{V, \Gamma}$ onto $\mathscr{L}(b)$, we proceed as follows.
By Proposition \ref{cooltrick}, it further follows that for $z \in \D$,
$A(z)$ is invertible and $b(z) = z A(z) ^{-1} B(z)$ and so
$$ k_w ^\Gamma (z) = A(z) \left( \frac{1 - b(z) b(w) ^* }{1 - z \overline{w} } \right) A(w) ^*, \quad z,w  \in \D, $$ and similarly for $z, w \in \C \setminus \D ^{-} $, $B(z), B(w)$ are invertible and so
\begin{align*} k_w ^\Gamma (z) & =  B(z) \left( \frac{1 - b (z) ^{-1} (b (w) ^* ) ^{-1} }{1 - z^{-1} \overline{w} ^{-1} } \right) B(w) ^*, \quad \quad  |z | , |w| > 1 \nonumber \\
& =   B(z) \left( \frac{1 - b ( 1 /\overline{z} ) ^*  b (1 / \overline{w} ) }{1 - z^{-1} \overline{w} ^{-1} } \right) B(w) ^*, \quad  \quad  |z | , |w| > 1.
\end{align*}

Now compare this kernel for $\HH _\Gamma$ to that of $\mathscr{L} (b)$, $$K_w (z)  =  \frac{G_b (z) + G_b (w) ^*}{1-z\overline{w}}.$$  Using the formula $$ G_b := \frac{1+b}{1-b}, $$ and the fact that $b(z) = z A(z) ^{-1} B(z)$ for $z \in \D$, we have
$$ G_b (z) = \frac{A(z) + z B(z)}{A(z) - z B(z) } , \quad \quad z \in \C \setminus \T. $$ Inserting this expression into the formula for the kernel $K_w (z)$ of $\mathscr{L} (b)$ yields
\small
$$ K_w (z) = \sqrt{2} (A(z) + z B(z) ) ^{-1} \left( \frac{A(z) A(w) ^* -z B(z) B(w) ^* \overline{w} }{1-z \overline{w} } \right) (A(w) ^* + B(w) ^* \overline{w} ) ^{-1} \sqrt{2}.$$
\normalsize
The preceding simplifies to
$$W(z) k_w ^\Gamma (z) W(w)^*, \quad \quad z,w \in \C \setminus \T$$
where
$$ W(z) := \sqrt{2} \left( A(z) + z B(z) \right) ^{-1}, \quad \quad z \in \C \setminus \T.$$
Hence $W : \HH _{V, \Gamma} \rightarrow \mathscr{L} (b)$ is an isometric multiplier of $\HH _\Gamma$ onto $\mathscr{L} (b)$.

It follows that given any model $\Gamma$ for the partial isometry $V$, we can define a new model:
$$ \wt{\Gamma} (z) := \Gamma (z) W(z) ^*, $$ so that
$$ k _w  ^{\wt{\Gamma}} (z) = W(z) \Gamma (z) ^* \Gamma (w)  W (w) ^* = K_w (z).$$  This shows that $\HH _{\wt{\Gamma} } = \mathscr{L} (b)$, so that $\mathscr{L} (b)$ can be thought of as the canonical
model space for a partial isometry with characteristic function $w _V = b$. Since $\mathscr{L} (b)$ is canonical in this sense, we will use the notation $\ZZZ _b $ for the partial isometry which acts as multiplication by $z$ on its initial space in $\mathscr{L} (b)$ and is unitarily equivalent to $V$.  It is also straightforward to check that the characteristic function of $\ZZZ _b$ is $b$ so that $b = w_V$.
\end{proof}

\section{The partial order of Halmos and McLaughlin}

Halmos and McLaughlin gave the following partial order on the set of all partial isometries, not only the ones in $\VV_{n}$ \cite{Halmos}. For two partial isometries $A, B$, we say that $A \Halmos B$ if $B$ agrees with $A$ on the initial space of $A$.  Since $A^{*} A$ is the orthogonal projection onto its initial space, $A \Halmos B$ if and only if
$$A = B A^{*} A.$$ The following follows quickly from the definition of $\Halmos$.

\begin{Proposition}\label{P-Halmos}
The relation $\Halmos$ defines a partial order on the set of partial isometries.
\end{Proposition}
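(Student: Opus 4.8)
The plan is to verify directly that $\Halmos$ is reflexive, antisymmetric, and transitive, working throughout from the algebraic reformulation $A \Halmos B \iff A = B A^{*} A$ recorded just above the statement. Reflexivity is immediate: since $A^{*}A$ is the orthogonal projection onto the initial space $\ker{A}^{\perp}$, Proposition \ref{basicPIfacts}(2) gives $A = A A^{*} A$, which is exactly $A \Halmos A$. The real content is packaged into a single observation about initial spaces, from which both antisymmetry and transitivity fall out quickly.

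That observation is the following: if $A \Halmos B$ then $\ker{A}^{\perp} \subseteq \ker{B}^{\perp}$, equivalently $B^{*}B \, A^{*}A = A^{*}A$, i.e.\ the projection $B^{*}B$ fixes the initial space of $A$. To see this I would take $x \in \ker{A}^{\perp}$, so that $A^{*}A x = x$ and hence $A x = B A^{*}A x = Bx$. Because $A$ is a partial isometry and $x$ lies in its initial space, $\|Bx\| = \|Ax\| = \|x\|$. But for any partial isometry $B$ one has $\|Bx\| \le \|x\|$ with equality precisely when $x \in \ker{B}^{\perp}$ (decompose $x$ along $\ker{B}^{\perp} \oplus \ker{B}$ and use that $B$ is isometric on $\ker{B}^{\perp}$ and annihilates $\ker{B}$). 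Thus $x \in \ker{B}^{\perp}$, giving the claimed inclusion.

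With this in hand the remaining axioms are short. For transitivity, suppose $A \Halmos B$ and $B \Halmos C$, so that $A = B A^{*}A$ and $B = C B^{*}B$. The inclusion lemma gives $B^{*}B \, A^{*}A = A^{*}A$, whence $A = B A^{*}A = C B^{*}B A^{*}A = C A^{*}A$, i.e.\ $A \Halmos C$. For antisymmetry, $A \Halmos B$ and $B \Halmos A$ force $\ker{A}^{\perp} \subseteq \ker{B}^{\perp}$ and $\ker{B}^{\perp} \subseteq \ker{A}^{\perp}$, so the initial spaces coincide and therefore $A^{*}A = B^{*}B$; then $A = B A^{*}A = B B^{*}B = B$, again by Proposition \ref{basicPIfacts}(2).

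The computations here are all routine, so there is no genuine obstacle; the one step that must be gotten right is the initial-space inclusion lemma, and in particular the equality case ``$\|Bx\| = \|x\|$ implies $x \in \ker{B}^{\perp}$'' for partial isometries, since this is precisely what upgrades the mere agreement of $A$ and $B$ on $\ker{A}^{\perp}$ into the containment of initial spaces that drives both transitivity and antisymmetry.
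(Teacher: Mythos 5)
Your proof is correct, and the paper itself offers no argument here (it simply asserts that the proposition ``follows quickly from the definition''), so there is nothing to diverge from. Your verification is the natural one: reflexivity from $A = AA^{*}A$, and the initial-space inclusion lemma --- with the equality case of $\|Bx\|\leqslant\|x\|$ correctly handled via the decomposition along $\ker{B}^{\perp}\oplus\ker{B}$ --- cleanly delivering both transitivity and the antisymmetry that the paper's introduction also asserts ($A\Halmos B$ and $B\Halmos A$ force $A=B$).
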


\begin{Example}
\begin{enumerate}
\item Suppose that $\{\mathbf{u}_1, \mathbf{u}_{2}, \ldots, \mathbf{u}_{n}\}$ is any orthonormal basis for $\C^n$. The matrices
$$A = [\mathbf{u}_1 | \mathbf{u}_2| \cdots| \mathbf{u}_{r}| \mathbf{0}|\mathbf{0}|\cdots|\mathbf{0}], \quad B = [\mathbf{u}_1 | \mathbf{u}_2| \cdots| \mathbf{u}_r| \mathbf{u}_{r + 1}|\mathbf{0}|\cdots|\mathbf{0}]$$
are partial isometric matrices and one can check that $A \Halmos B$.
\item
Consider the $n \times n$ block matrix
$$V = \left[\begin{array}{cc}
 0 & 0 \\
 U & 0 \\
\end{array}\right]$$
where $U$ is any $r \times r$ unitary matrix. If $A$ is any $(n - r) \times (n - r)$ partial isometric matrix, one can show that
$$V_{A} = \left[\begin{array}{cc}
 0 & A \\
 U & 0 \\
\end{array}\right]$$
is a partial isometry. Using block multiplication of matrices one can verify the formula
$$V = V_{A} (V^{*} V)$$ and so $V  \Halmos V_A$. One can argue that if $W$ is any partial isometry with $V \Halmos W$ then $W = V_{A}$ for some partial isometry $A$. We thank Yi Guo and Zezhong Chen for pointing this out to us.
\item
Recall the operators $A = S^{*} \oplus S$ on $H^2 \oplus H^2$ and $B = A|_{H^2 \oplus H^{2}_{0}}$ from Examples \ref{SsPS} and \ref{SsPS2}. Since $\ker{B} = \C \oplus \mathbf{0}$,  $\ker{B}^{\perp} = H^{2}_{0} \oplus H_{0}^{2}$. Note that
$$B|_{\ker{B}^{\perp}} = A|_{\ker{B}^{\perp}}$$ and so $B \Halmos A$.
\end{enumerate}
\end{Example}

\section{Two other partial orders}

Let $\SS_{n}(\H)$ denote the simple symmetric linear transformations on $\H$ with $(n, n)$ deficiency indices and $\SS_{n}$ denote the collection of all such operators on any Hilbert space. Recall here that a symmetric linear transformation $S$ is said to be {\em simple} if its Cayley transform $V = \beta(S)$ is completely non-unitary.

\begin{Definition}\label{D-1a}\hfill
\begin{enumerate}
\item For $S \in \SS_{n}(\H_{S})$ and $T \in \SS_{n}(\H_{T})$ we say that $S \unitary T$ if there exists an isometric map $U: \H_{S} \to \H_T$ such that $U\dom{S} \subset \dom{T}$ and
$$U S|_{\dom{S}} = T U|_{\dom{S}}.$$
\item For $S \in \SS_{n}(\H_{S})$ and $T \in \SS_{n}(\H_{T})$ we say that $S \quasi T$ if there exists a bounded injective map $X: \H_{S} \to \H_T$ such that $X\dom{S} \subset \dom{T}$ and
$$X S|_{\dom{S}} = T X|_{\dom{S}}.$$
\end{enumerate}
\end{Definition}

\begin{Definition}\label{D-2a}\hfill
\begin{enumerate}
\item For $A \in \VV_{n}(\H_{A})$ and $B \in \VV_{n}(\HH_B)$ we say that $A \unitary B$ if there exists an isometric map $U: \HH_{A} \to \HH_{B}$ such that $U \ker{A}^{\perp} \subset \ker{B}^{\perp}$ and
$$U A|_{\ker{A}^{\perp}} = B U|_{\ker{A}^{\perp}}.$$
\item For $A \in \VV_{n}(\H_{A})$ and $B \in \VV_{n}(\HH_B)$ we say that $A \quasi B$ if there exists an injective $X: \HH_{A} \to \HH_{B}$ such that $X \ker{A}^{\perp} \subset \ker{B}^{\perp}$ and
$$X A|_{\ker{A}^{\perp}} = B X|_{\ker{A}^{\perp}}.$$
\end{enumerate}
\end{Definition}

Given $A \in \VV_n(\H_{A})$ and $B \in \VV_{n}(\H_{B})$, let
$S = \beta^{-1}(A)$ and $T = \beta^{-1}(B)$, where
$$\beta(z) = \frac{z - i}{z + i}, \quad \beta^{-1}(z) = i \frac{1 + z}{1 - z}.$$
Standard theory implies that
$S \in \SS_{n}(\H_{A})$ and $T \in \SS_{n}(\H_{B})$. The following two facts are straightforward to verify.

\begin{Proposition}
With the above notation we have
\begin{enumerate}
\item $A \unitary B \iff S \unitary T$;
\item $A \quasi B \iff S \quasi T$.
\end{enumerate}
\end{Proposition}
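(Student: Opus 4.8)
The plan is to prove both equivalences at once by showing that the \emph{same} intertwining operator witnesses the relation in the partial isometry picture and in the symmetric transformation picture, the translation between the two being governed entirely by the algebra of the Cayley transform. First I would record the basic identities tying $A$ to $S = \beta^{-1}(A)$ (and, identically, $B$ to $T = \beta^{-1}(B)$). Since $\dom{S} = (I - A)\ker{A}^{\perp}$ and $Sy = i(I + A)x$ whenever $y = (I - A)x$ with $x \in \ker{A}^{\perp}$, one has for every such pair
\[
(S + iI)y = 2i\,x, \qquad (S - iI)y = 2i\,Ax .
\]
In particular $x = \tfrac{1}{2i}(S + iI)y$ and $Ax = \tfrac{1}{2i}(S - iI)y$, so that $\ker{A}^{\perp} = (S + iI)\dom{S}$ and the map $x \mapsto (I-A)x$ is a linear bijection of $\ker{A}^{\perp}$ onto $\dom{S}$.

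For the forward implication of (1), suppose $A \unitary B$ via an isometric $U \colon \H_{A} \to \H_{B}$ with $U\ker{A}^{\perp} \subset \ker{B}^{\perp}$ and $UA|_{\ker{A}^{\perp}} = BU|_{\ker{A}^{\perp}}$. I claim the same $U$ witnesses $S \unitary T$. Indeed, for $x \in \ker{A}^{\perp}$ and $y = (I-A)x \in \dom{S}$, the intertwining gives
\[
Uy = Ux - UAx = Ux - BUx = (I - B)Ux ,
\]
and since $Ux \in \ker{B}^{\perp}$ this lies in $(I-B)\ker{B}^{\perp} = \dom{T}$, proving $U\dom{S} \subset \dom{T}$. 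Applying the identities for $T$ to $Uy$ and using $UAx = BUx$ then yields $USy = i(I+B)Ux = TUy$, that is, $US|_{\dom{S}} = TU|_{\dom{S}}$.

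The reverse implication runs through the same identities read backwards. Given an isometric $U$ with $U\dom{S} \subset \dom{T}$ and $US = TU$ on $\dom{S}$, each $x \in \ker{A}^{\perp}$ equals $\tfrac{1}{2i}(S+iI)y$ for some $y \in \dom{S}$, whence $Ux = \tfrac{1}{2i}(T+iI)Uy \in (T+iI)\dom{T} = \ker{B}^{\perp}$ and $UAx = \tfrac{1}{2i}(T - iI)Uy = BUx$, the last equality because $B$ agrees with $\beta(T)$ on $\ker{B}^{\perp}$. This gives $A \unitary B$. For part (2) the computations are literally the same: one replaces the isometric $U$ by a bounded injective $X$ to pass between $A \quasi B$ and $S \quasi T$, and since all the identities above are purely linear and involve only the single map $X$, boundedness and injectivity carry over unchanged between the two formulations.

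The only point requiring care --- and the reason complete non-unitarity is hypothesized --- is the well-definedness of the correspondence $x \leftrightarrow y$: the Cayley transform $S$ is a genuine (single-valued) linear transformation precisely because $(I - A)$ is injective on $\ker{A}^{\perp}$, which holds since an eigenvector of $A$ at $1$ lying in $\ker{A}^{\perp}$ would span a one-dimensional reducing subspace on which $A$ acts as the identity, contradicting that $A \in \VV_{n}$. I would also stress that no density of $\dom{S}$ or $\dom{T}$ is used anywhere, so the argument remains valid for the (possibly non-densely-defined) linear transformations in $\SS_{n}$.
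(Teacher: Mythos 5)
Your proof is correct, and since the paper simply declares this proposition ``straightforward to verify'' without supplying an argument, your Cayley-transform computation --- using the identities $(S+iI)(I-A)x = 2ix$ and $(S-iI)(I-A)x = 2iAx$ to show that the very same intertwiner $U$ (or $X$) witnesses the relation on both sides --- is exactly the verification the authors intend the reader to carry out. Your closing remarks on the injectivity of $I-A$ on $\ker{A}^{\perp}$ and on the irrelevance of density of $\dom{S}$ are accurate and address the only points where care is genuinely needed.
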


\begin{Proposition}
The relations $\unitary$ and $\quasi$ are reflexive and transitive.
\end{Proposition}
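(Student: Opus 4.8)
The plan is to prove all the relevant cases --- $\unitary$ and $\quasi$, on both $\SS_n$ and $\VV_n$ --- by one uniform argument, since each relation is defined by the existence of a single map $\Phi$ between the ambient Hilbert spaces that (a) has a prescribed property $\mathcal{P}$ (being isometric for $\unitary$, bounded and injective, or injective, for $\quasi$), (b) carries a distinguished subspace into another ($\dom{S}$ into $\dom{T}$ for symmetric operators, $\ker{A}^{\perp}$ into $\ker{B}^{\perp}$ for partial isometries), and (c) intertwines the two operators on that subspace. The key observation is that each such property $\mathcal{P}$ is preserved by the identity map and by composition, and so are the subspace-inclusion and intertwining conditions. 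As an alternative I could prove only the symmetric-operator case and then transfer to the partial-isometry case through the equivalences $A \unitary B \iff S \unitary T$ and $A \quasi B \iff S \quasi T$ of the preceding Proposition, which obviously carry reflexivity and transitivity across.

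For reflexivity, I would use the identity operator as witness. For $S \in \SS_n(\H_S)$ set $\Phi = I : \H_S \to \H_S$: it is isometric (hence also bounded and injective), it satisfies $I\dom{S} = \dom{S} \subseteq \dom{S}$, and the intertwining identity $I\,S|_{\dom{S}} = S\,I|_{\dom{S}}$ is trivial. Thus $S \unitary S$ and $S \quasi S$; the same witness $\Phi = I$ gives $A \unitary A$ and $A \quasi A$ for $A \in \VV_n(\H_A)$.

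For transitivity I would compose witnesses. Suppose $S \unitary T$ via $U : \H_S \to \H_T$ and $T \unitary R$ via $W : \H_T \to \H_R$; I claim $WU$ witnesses $S \unitary R$. First, $WU$ is isometric as a composition of isometries --- and in the $\quasi$ cases $WU$ inherits boundedness and injectivity from $U$ and $W$, the latter because $WUf = 0$ forces $Uf \in \ker{W} = \{0\}$ and hence $f = 0$. Second, the inclusion propagates: $WU\dom{S} \subseteq W\dom{T} \subseteq \dom{R}$. Third, for the intertwining I compute on $\dom{S}$, using $U\,S|_{\dom{S}} = T\,U|_{\dom{S}}$ and then the intertwining of $W$ with $T$, that $WU\,S|_{\dom{S}} = W\,T\,U|_{\dom{S}} = R\,W\,U|_{\dom{S}}$. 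The partial-isometry cases are identical, with $\ker{\cdot}^{\perp}$ replacing $\dom{\cdot}$.

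The one point that genuinely needs care --- and the step I would flag as the main (if minor) obstacle --- is the domain bookkeeping in this last chain: the hypothesis that $W$ intertwines $T$ and $R$ is assumed only on $\dom{T}$ (respectively $\ker{B}^{\perp}$), so applying it to the vectors $Uf$, $f \in \dom{S}$, is legitimate precisely because the inclusion condition $U\dom{S} \subseteq \dom{T}$ places those vectors in $\dom{T}$. Once this is noted, no further work is required: antisymmetry is not asserted, so the verification is complete and entirely formal.
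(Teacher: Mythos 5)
Your proof is correct: reflexivity via the identity map and transitivity via composition of witnesses (with the observation that the inclusion condition is exactly what licenses applying the second intertwining relation to the image vectors) is the standard verification, and the paper itself states this proposition without proof, treating it as routine. Nothing further is needed.
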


Recall here that any binary relation which is reflexive and transitive is called a pre-order \cite[Definition 5.2.2]{BS03}.
This proposition shows that $\unitary$ and $\quasi$ are pre-orders on $\VV$, the set of all
completely non-unitary  partial isometries with equal deficiency indices.

\begin{Definition}
For $A, B \in \VV_n$, we say that
\begin{enumerate}
\item $A \sim B$ if both $A \unitary B$ and $B \unitary A$;
\item $A \sim_{q} B$ if both $A \quasi B$ and $B \quasi A$.
\end{enumerate}
\end{Definition}

It is well-known that given any pre-order $\lesssim$ on a set $S$, if one defines a binary relation $\sim$ on $S \times S$ as above, then
$\sim$ is an equivalence relation and $\lesssim$ can be viewed as a partial order on $S / \sim $ \cite[Proposition 5.2.4]{BS03}. In particular we
have that:

\begin{Corollary}
 The binary relations $\sim$ and $\sim_q$ are equivalence relations on $\VV_n$ and the pre-orders $\unitary$ and $\quasi$
induce partial orders on $\VV_n/\sim$ and $\VV_n/\sim_q$ respectively.
\end{Corollary}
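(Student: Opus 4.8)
The plan is to treat this as a purely formal consequence of two facts already in hand: the immediately preceding proposition, which establishes that $\unitary$ and $\quasi$ are pre-orders (reflexive and transitive) on $\VV_n$, and the general order-theoretic principle cited as \cite[Proposition 5.2.4]{BS03}. In the paper it therefore suffices to invoke that principle directly, but for transparency I would record the short verification, carrying out each step for $\unitary$ and noting that the argument for $\quasi$ is identical with $\sim_{q}$ in place of $\sim$.

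First I would check that $\sim$ is an equivalence relation. Reflexivity is immediate from reflexivity of the pre-order: $A \unitary A$ gives $A \sim A$. Symmetry is built into the definition, since the conditions ``$A \unitary B$ and $B \unitary A$'' are symmetric in $A$ and $B$. For transitivity, I would assume $A \sim B$ and $B \sim C$, so that all four of $A \unitary B$, $B \unitary A$, $B \unitary C$, $C \unitary B$ hold; transitivity of $\unitary$ then yields both $A \unitary C$ and $C \unitary A$, which is exactly $A \sim C$. The same three checks applied to $\quasi$ show that $\sim_{q}$ is an equivalence relation.

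Next I would define the induced relation on the quotient by declaring $[A] \unitary [B]$ precisely when $A \unitary B$, and verify that this does not depend on the chosen representatives: if $A \sim A'$ and $B \sim B'$ with $A \unitary B$, then chaining $A' \unitary A \unitary B \unitary B'$ via transitivity gives $A' \unitary B'$. Reflexivity and transitivity of the induced relation are inherited verbatim from those of $\unitary$. The step that upgrades a pre-order to a genuine partial order is antisymmetry, and it is forced by the very definition of $\sim$: if $[A] \unitary [B]$ and $[B] \unitary [A]$, then $A \unitary B$ and $B \unitary A$, i.e.\ $A \sim B$, so $[A] = [B]$. The identical reasoning applies to $\quasi$ and $\sim_{q}$. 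Since every step is formal, there is no substantive obstacle; the only point requiring any care is the well-definedness of the induced order on representatives, which is exactly where transitivity of the underlying pre-order is used, and all of this is subsumed by \cite[Proposition 5.2.4]{BS03}.
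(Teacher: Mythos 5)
Your proposal is correct and matches the paper's approach: the paper likewise treats the corollary as an immediate consequence of the preceding proposition (that $\unitary$ and $\quasi$ are pre-orders) together with the standard order-theoretic fact cited as \cite[Proposition 5.2.4]{BS03}, and your explicit verification of symmetry, transitivity, well-definedness on the quotient, and antisymmetry is exactly the routine argument that citation encapsulates.
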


At this point, one could ask what the equivalences classes generated by $\sim$ and $\sim_{q}$ are. In particular, one might expect that the equivalence classes of $\sim$ to simply be unitary equivalence classes. We can show that this is the case for a large subclass of $\VV _n$ (see Theorems \ref{eclass} and \ref{eclass2}), but the proofs are nontrivial.  Before investigating the nature of these equivalence classes further, it will first be convenient to develop a function theoretic characterization of these two partial orders in terms of multipliers between the abstract model spaces or Herglotz spaces associated with partial isometries in $\VV _n$.

\section{Partial orders and multipliers}

\label{POmult}

Recall the associated operator $\widehat{\ZZZ} _A$ of multiplication by the independent variable  on $\HH_A$ defined on $\dom{\widehat{\ZZZ} _A} = \{f \in \HH_A: z f \in \HH_A\}$. Also recall the associated partial isometry $\ZZZ_{A}$ obtained by extending $\widehat{\ZZZ} _{A}$ by zero on $\dom{\widehat{\ZZZ} _{A}}^{\perp}$.

\begin{Theorem}
For $A, B \in \VV_n$ with associated operators $\ZZZ_{A}$ on $\HH_A$ and $\ZZZ_{B}$ on $\HH_B$, the following are equivalent
\begin{enumerate}
\item $A \quasi B$;
\item $\ZZZ_{A} \quasi \ZZZ_{B}$;
\item There exists a multiplier from $\HH_A$ to $\HH_B$.
\end{enumerate}
\label{multiplier}
\end{Theorem}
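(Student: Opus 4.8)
The plan is to prove the cycle $(1)\Leftrightarrow(2)\Leftrightarrow(3)$ by first disposing of the equivalence $(1)\Leftrightarrow(2)$ as an immediate consequence of the unitary equivalence $V|_{\ker{V}^\perp}\cong\widehat{\ZZZ}_{V,\Gamma}$ established in Section \ref{absmodel}, and then doing the real work in $(2)\Leftrightarrow(3)$. For the first equivalence, recall that the unitary $U_A\colon\H_A\to\HH_A$, $U_A f=\widehat f$, intertwines $A$ with $\ZZZ_A$ and carries $\ker{A}^\perp$ onto $\ker{\ZZZ_A}^\perp=\dom{\widehat{\ZZZ}_A}$; the analogous statement holds for $B$. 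Thus if $X\colon\H_A\to\H_B$ is a bounded injective map implementing $A\quasi B$ in the sense of Definition \ref{D-2a}, then $\wt X:=U_B X U_A^*$ is a bounded injective map from $\HH_A$ to $\HH_B$ implementing $\ZZZ_A\quasi\ZZZ_B$, and conversely. Since $A\quasi B\iff S\quasi T$ was already recorded for the Cayley transforms, this step is essentially bookkeeping and should present no obstacle.

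For $(2)\Rightarrow(3)$, suppose $X\colon\HH_A\to\HH_B$ is bounded and injective with $X\dom{\widehat{\ZZZ}_A}\subset\dom{\widehat{\ZZZ}_B}$ and $X\widehat{\ZZZ}_A f=\widehat{\ZZZ}_B Xf$ for $f\in\dom{\widehat{\ZZZ}_A}$, i.e.\ $X(zf)=z(Xf)$ whenever $zf\in\HH_A$. I want to manufacture an analytic $M_n(\C)$-valued function $\Phi$ on $\C\setminus\T$ with $(Xf)(\lambda)=\Phi(\lambda)f(\lambda)$. The natural route is through the reproducing kernels: for $\mathbf a\in\C^n$ and $w\in\C\setminus\T$ one has $\langle Xf,k^{\HH_B}_w\mathbf a\rangle_{\HH_B}=\langle(Xf)(w),\mathbf a\rangle_{\C^n}$, and taking adjoints, $X^*k^{\HH_B}_w\mathbf a\in\HH_A$ evaluates the action of $X$ on kernels. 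The intertwining relation $X(z\,\cdot)=z\,(X\,\cdot)$ transfers to the statement that $X^*$ commutes with multiplication by $z$ on the appropriate domain, and one shows the kernel is preserved in the sense that $(X k^{\HH_A}_w\mathbf a)(\lambda)=\Phi(\lambda)\big(k^{\HH_A}_w\mathbf a\big)(\lambda)\overline{\cdots}$; more cleanly, one defines $\Phi(\lambda)$ by its action on the finite-dimensional fibre. The key point is that $X$ commuting with multiplication by $z$ forces it to be a multiplication operator: if $f(\lambda_0)=\mathbf 0$ then, using the division property of $\HH_A$ noted after the definition of the abstract model kernel, $f=(z-\lambda_0)g$ locally with $g\in\HH_A$, whence $(Xf)(\lambda_0)=(\lambda_0-\lambda_0)(Xg)(\lambda_0)=\mathbf 0$; thus $X$ maps the kernel of point-evaluation at $\lambda_0$ into itself, and the induced map on the $n$-dimensional quotient $\HH_A/\{f:f(\lambda_0)=\mathbf 0\}\cong\C^n$ is a well-defined linear map $\Phi(\lambda_0)\colon\C^n\to\C^n$ satisfying $(Xf)(\lambda_0)=\Phi(\lambda_0)f(\lambda_0)$. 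Analyticity of $\Phi$ follows from analyticity of the evaluation maps and of $X^*k^{\HH_B}_\lambda$.

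For the converse $(3)\Rightarrow(2)$, given a multiplier $\Phi$ with $\Phi\HH_A\subset\HH_B$, I set $Xf=\Phi f$. Boundedness is automatic from the closed graph theorem once $X$ is shown to be everywhere-defined into $\HH_B$ (point evaluations are continuous on both RKHS, so the graph is closed). That $X$ intertwines the multiplication-by-$z$ operators is immediate since $\Phi(z f)=z(\Phi f)$ pointwise, and $X$ maps $\dom{\widehat{\ZZZ}_A}$ into $\dom{\widehat{\ZZZ}_B}$ because $f,zf\in\HH_A$ gives $\Phi f,\,z\Phi f=\Phi(zf)\in\HH_B$. The only genuinely nontrivial point is injectivity: I must rule out that $\Phi$ annihilates a nonzero $f\in\HH_A$. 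Here I would invoke non-degeneracy of the kernel together with the division property: since $\bigvee_{\lambda}\ran{\Gamma(\lambda)}=\H_A$, a function is determined by its values, and the set where $\det\Phi(\lambda)=0$ is discrete (as $\Phi$ is analytic and, on $\D$ and on $\D_e$, $\Phi(\lambda)$ is invertible by the argument producing $W$ in Section \ref{Herglotzspace}); hence $\Phi f=0$ forces $f(\lambda)=\mathbf 0$ off a discrete set, and by the division property $f\equiv\mathbf 0$.

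**The main obstacle** I anticipate is the construction of $\Phi$ in $(2)\Rightarrow(3)$ and verifying it is \emph{globally} analytic across both components $\D$ and $\D_e$ of $\C\setminus\T$: the fibrewise definition gives a well-defined matrix $\Phi(\lambda)$ for each $\lambda$, but one must check the intertwining relation $X(zf)=z(Xf)$ is exactly what is needed for $\Phi$ to be independent of $f$ and that the resulting $\lambda\mapsto\Phi(\lambda)$ inherits analyticity from the anti-analytic dependence of $\Gamma(\lambda)$; the invertibility of point-evaluation surjectivity guaranteed by the division property is the technical lever that makes $\Phi$ single-valued.
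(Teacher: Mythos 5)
Your proposal is correct in outline and follows the same architecture as the paper's proof: $(1)\Leftrightarrow(2)$ is handled by conjugating the intertwiner with the model unitaries $U_A,U_B$, and $(2)\Leftrightarrow(3)$ by showing that intertwining the multiplication operators forces $X$ to act as multiplication by a matrix-valued function. The one real difference is how $\Phi(\lambda)$ is extracted in $(2)\Rightarrow(3)$: the paper observes that $X\ran{\widehat{\ZZZ}_A-wI}\subset\ran{\widehat{\ZZZ}_B-wI}$, hence $X^*$ carries $\ran{\widehat{\ZZZ}_B-wI}^{\perp}=\bigvee_{\mathbf a}k^B_w\mathbf a$ into $\bigvee_{\mathbf a}k^A_w\mathbf a$, so that $X^*k^B_w\mathbf a=k^A_wR(w)^*\mathbf a$ hands you the multiplier directly; you instead use the division property to show $X$ preserves the subspaces $\{f:f(\lambda_0)=\mathbf 0\}$ and define $\Phi(\lambda_0)$ on the $n$-dimensional quotient. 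These are dual formulations of the same fact, since by the division property $\{f:f(\lambda_0)=\mathbf 0\}=\ran{\widehat{\ZZZ}_A-\lambda_0 I}$ and its orthocomplement is exactly the span of the kernels at $\lambda_0$; the paper's version has the minor advantage that analyticity of $R$ falls out of the anti-analyticity of $w\mapsto k^A_w$, which your sketch leaves implicit.

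The one step that does not hold up as written is your justification of injectivity of $M_\Phi$ in $(3)\Rightarrow(2)$. You claim $\Phi(\lambda)$ is invertible on $\D$ and on $\D_e$ ``by the argument producing $W$ in Section \ref{Herglotzspace},'' but that argument establishes invertibility of $A(z)=j_z^*j_0$ and $B(z)=j_z^*j_\infty$ for the model maps of a \emph{single} partial isometry; it says nothing about an arbitrary multiplier between two different abstract model spaces. Indeed the paper's own multiplier from $\HH_B$ to $\HH_A$ for $A=S^*\oplus S$ vanishes at $z=0$, and for $n>1$ one cannot rule out $\det\Phi\equiv 0$ on a whole component by appealing to discreteness of zeros. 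To be fair, the paper itself simply asserts that $M_R$ is injective after invoking the closed graph theorem, so the gap is present in the source as well; but your proposed repair cites an inapplicable result, and a careful treatment must either build a nondegeneracy requirement into the meaning of ``multiplier'' in item (3) or argue injectivity separately (e.g.\ from the division property together with the assumption that $\det\Phi$ does not vanish identically on either component).
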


\begin{proof}
Assume that $A \quasi B$. Then there is a bounded injective operator $X: \H_{A} \to \H_{B}$ with
$X \ker{A}^{\perp} \subset \ker{B}^{\perp}$ and $X A|_{\ker{A}^{\perp}} = B X|_{\ker{A}^{\perp}}$. Let $U_{A}: \H_{A} \to \HH_{A}$ be the unitary which induces the unitary equivalence between $A$ and $\ZZZ_{A}$, that is, $U_{A} \ker{A}^{\perp} = \dom{\widehat{\ZZZ} _{A}} = \ker{\ZZZ_{A}}^{\perp}$ (and hence $U_{A} \ker{A}  = \ker{\ZZZ_{A}}$) and such that
$\ZZZ_{A} U_{A} = U_{A} A$. Define
$$Y: \HH_{A} \to \HH_{B}, \quad Y = U_{B} X U_{A}^{*}$$ and note that
\begin{align*}
Y \ker{\ZZZ_{A}}^{\perp} & = U_{B} X U_{A}^{*} \ker{\ZZZ_{A}}^{\perp} = U_{B} X \ker{A}^{\perp}\\
& \subset U_{B} \ker{B}^{\perp} =\ker{\ZZZ_{B}}^{\perp}.
\end{align*}
Also note that if $f \in \ker{\ZZZ_{A}}^{\perp} = \dom{\widehat{\ZZZ} _{A}}$ then $f = U_{A} x_{f}$ for some $x_{f} \in \ker{A}^{\perp}$. Moreover,
\begin{align*}
Y \widehat{\ZZZ} _{A} f & = U_{B} X U_{A}^{*} \widehat{\ZZZ} _{A} U_{A} x_{f}
 = U_{B} X A x_{f}
 = U_{B} B X x_{f}\\
& = \widehat{\ZZZ} _{B} U_{B} X x_{f} = \widehat{\ZZZ} _{B} U_{B} X U_{A}^{*} f = \widehat{\ZZZ} _{B} Y f.
\end{align*}
This is precisely the definition of $\ZZZ_{A}\quasi \ZZZ_{B}$. Thus statement (1) implies statement (2).

The proof of $(2) \implies (1)$ is similar. Indeed, if $\ZZZ_{A} \quasi \ZZZ_{B}$ then there is a bounded injective operator $X_{1}: \HH_{A} \to \HH_{B}$ with $X_{1} \ker{\ZZZ_{A}} ^\perp \subset \ker{\ZZZ_{B}}^{\perp}$ such that
$X_{1} \ZZZ_{A}|_{\ker{\ZZZ_{A}}^{\perp}}  = \ZZZ_{B} X_{1}|_{\ker{\ZZZ_{A}}^{\perp}}.$ Define
$$Y_{1}: \H_{A} \to \H_{B}, \quad Y_1 = U_{B}^{*} X_1 U_{A}$$ and follow the computation above to show that
$Y_{1} A|_{\ker{A}^{\perp}} = B Y_{1}|_{\ker{A}^{\perp}}$.

We now show the equivalence of statements (2) and (3). First we note that for any $\mathbf{a} \in \C$ and $w \in \C \setminus \T$ that
$$k^{A}_{w} \mathbf{a} \in \ran{\widehat{\ZZZ} _{A} - w I}^{\perp}, \quad k^{B}_{w} \mathbf{b} \in \ran{\widehat{\ZZZ} _{B} - w I}^{\perp}.$$  This implies that
$$\ran{\widehat{\ZZZ} _{A} - w I}^{\perp} = \bigvee_{\mathbf{a} \in \C^n} k_{w}^{A} \mathbf{a}, \quad \ran{\widehat{\ZZZ} _{B} - w I}^{\perp} = \bigvee_{\mathbf{a} \in \C^n} k_{w}^{B} \mathbf{a}.$$

Recall that $$\ker{\ZZZ_{A}}^{\perp} = \dom{\widehat{\ZZZ} _{A}}, \quad \ker{\ZZZ_{B}}^{\perp} = \dom{\widehat{\ZZZ} _{B}}.$$
Now suppose that $X: \HH_{A} \to \HH_B$ is injective with
$$X \ker{\ZZZ_{A}}^{\perp} \subset \ker{\ZZZ_{B}}^{\perp}$$ and
$$X \ZZZ_{A}|_{\ker{\ZZZ_{A}}^{\perp}} = \ZZZ_{B} X|_{\ker{\ZZZ_A}^{\perp}}.$$
Then for any $f \in \HH_A$ we have
$$\langle (X f)(z), \mathbf{a}\rangle_{\C^n}  = \langle X f, k_{z}^{A} \mathbf{a}\rangle = \langle f, X^{*} k_{z}^{A} \mathbf{a}\rangle.$$
Since
$$X \ran{\widehat{\ZZZ} _{A} - w I} \subset \ran{\widehat{\ZZZ} _B - w I}, \quad w \in \C \setminus \T,$$ we obtain
$$X^{*} \ran{\widehat{\ZZZ} _{B} - w I}^{\perp} \subset \ran{\widehat{\ZZZ} _{A} - w I}^{\perp} = \bigvee_{\mathbf{a} \in \C^n} k_{w}^{A} \mathbf{a}.$$
Thus
$$X^{*} k_{z}^{B} \mathbf{a} = k_{z}^{A} R(z) \mathbf{a}$$ for some $R(z)^{*} \in M_{n}(\C)$, which says that
\begin{align*}
\langle (X f)(z), \mathbf{a}\rangle_{\C^n}  &= \langle f, X^{*} k_{z}^{B} \mathbf{a}\rangle
 = \langle f, k_{z}^{A} R(z)^{*} \mathbf{a}\rangle_{\HH_{A}}\\
& = \langle f(z), R(z)^{*} \mathbf{a}\rangle_{\C^n}
 = \langle R(z) f(z), \mathbf{a}\rangle_{\C^n}.
\end{align*}
This says that
$$(X f)(z) = R(z) f(z), \qquad z \in \C \setminus \T.$$

Conversely, suppose that $R$ is a multiplier from $\HH_{A}$ to $\HH_B$. Then, via the closed graph theorem, $M_{R}$, multiplication by $R$, is an injective bounded operator from $\HH_{A}$ to $\HH_B$.
This means that if $f \in \ker{\ZZZ_{A}'}^{\perp} = \dom{\widehat{\ZZZ} _{A}}$
 then $R f,$ and $z R f = R z f \in \HH_{B}$ and so $R f \in \dom{\widehat{\ZZZ} _{B}} = \ker{\ZZZ _{B}}^{\perp}$. Thus
 $$M_{R} \ker{\ZZZ_{A}}^{\perp} \subset \ker{\ZZZ_{B}}^{\perp}.$$
Furthermore, for $f \in \ker{\ZZZ_{A}}^{\perp}$ we have
\begin{align*}
(M_{R} \ZZZ_{A} f)(z) & =
(R \widehat{\ZZZ}_{A} f)(z) = R(z) z f(z) = z R(z) f(z)\\
& = (\widehat{\ZZZ} _{B} R f)(z) = (\ZZZ_{B} R f)(z).
\end{align*}
Thus $\ZZZ_{A} \quasi \ZZZ_{B}$.
\end{proof}

\begin{Theorem}
For $A, B \in \VV_n$ with associated operators $\ZZZ_{A}$ on $\HH_A$ and $\ZZZ_{B}$ on $\HH_B$, the following are equivalent
\begin{enumerate}
\item $A \unitary B$;
\item $\ZZZ_{A} \unitary \ZZZ_{B}$;
\item There exists an isometric multiplier from $\HH_A$ to $\HH_B$.
\end{enumerate}
\label{isomult}
\end{Theorem}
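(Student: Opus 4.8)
The plan is to follow the proof of Theorem \ref{multiplier} almost verbatim, tracking the isometric property at each step. The key observation is that the isometric condition is transparent both to the unitary conjugation used to establish $(1)\iff(2)$ and to the intertwiner-to-multiplier correspondence used to establish $(2)\iff(3)$, so essentially no new estimate is required beyond what has already been done for $\quasi$.

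For $(1)\iff(2)$, I would recall the unitaries $U_A\colon \H_A\to\HH_A$ and $U_B\colon \H_B\to\HH_B$ that implement the unitary equivalences $A\cong\ZZZ_A$ and $B\cong\ZZZ_B$, as in the proof of Theorem \ref{multiplier}. Given an isometric intertwiner $U$ witnessing $A\unitary B$, I set $Y=U_B\,U\,U_A^{*}$. The same computation as in Theorem \ref{multiplier} shows $Y\ker{\ZZZ_A}^{\perp}\subset\ker{\ZZZ_B}^{\perp}$ and $Y\widehat{\ZZZ}_A=\widehat{\ZZZ}_B\,Y$ on $\dom{\widehat{\ZZZ}_A}$; the only addition is that $Y$ is the composition of an isometry with two unitary operators, hence is itself isometric, which gives $\ZZZ_A\unitary\ZZZ_B$. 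The reverse implication is identical, replacing $U_A,U_B$ by their adjoints.

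For $(2)\implies(3)$, I note that an isometric intertwiner $X\colon\HH_A\to\HH_B$ is in particular bounded and injective, so the argument in Theorem \ref{multiplier} applies unchanged and produces a multiplier $R$ with $(Xf)(z)=R(z)f(z)$, that is, $X=M_R$. Since $X$ is isometric, we have $\|M_R f\|_{\HH_B}=\|f\|_{\HH_A}$ for every $f\in\HH_A$, which is precisely the assertion that $R$ is an isometric multiplier. For $(3)\implies(2)$, given an isometric multiplier $R$, the operator $M_R$ is isometric (hence injective and bounded), and the intertwining computation from Theorem \ref{multiplier} shows that $M_R$ carries $\ker{\ZZZ_A}^{\perp}$ into $\ker{\ZZZ_B}^{\perp}$ and satisfies $M_R\,\ZZZ_A=\ZZZ_B\,M_R$ on $\ker{\ZZZ_A}^{\perp}$; thus $\ZZZ_A\unitary\ZZZ_B$.

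I do not expect a genuine obstacle here: the substantive content was already carried out in Theorem \ref{multiplier}, where it was shown that \emph{every} injective bounded intertwiner is automatically multiplication by some $R$. The only new point is the bookkeeping observation that, because $X=M_R$, the phrases ``isometric intertwiner'' and ``isometric multiplier'' denote literally the same condition once that correspondence is in hand, together with the trivial remark that isometry is preserved under the conjugation $X\mapsto U_B X U_A^{*}$.
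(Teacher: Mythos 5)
Your proposal is correct and takes essentially the same route as the paper, whose entire proof of Theorem \ref{isomult} is the one-line remark that the argument of Theorem \ref{multiplier} carries over with $R$ now an isometric multiplier. You have simply made explicit the two bookkeeping points the paper leaves implicit: that isometry survives the conjugation $X \mapsto U_B X U_A^{*}$, and that under the correspondence $X = M_R$ the conditions ``isometric intertwiner'' and ``isometric multiplier'' coincide.
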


\begin{proof}
The proof is the same as before but multiplication by $R$ is an isometric multiplier.
\end{proof}

\begin{Example} \label{scalar}
Suppose $\Theta_1$ and $\Theta_2$ are inner functions and consider the partial isometries $M_{\Theta_1}$ on $\K_{\Theta_1}$ and $M_{\Theta_2}$ on $\K_{\Theta_2}$. If $\Theta_1$ divides $\Theta_2$, i.e., $\Theta_{1}^{-1} \Theta_{2}$ is an inner function, then $M_{\Theta_1} \unitary M_{\Theta_2}$ since $\K_{\Theta_1} \subset K_{\Theta_2}$. The isometric $U: \K_{\Theta_1} \to K_{\Theta_2}$ can be taken to be the inclusion operator. Note that the norm on both spaces in the same (the $H^2$ norm) and so this inclusion is indeed isometric.
\end{Example}

\begin{Example} \label{inner}
The previous example can be generalized further. Suppose that $\Theta _2$ is an arbitrary contractive $M_n$-valued analytic function such that $\Theta_2 = \Theta _1 \Phi$ where $\Phi$ is a contractive analytic $M _n$-valued function and $\Theta _1$ is inner. Then by \cite[II-6]{Sarason}, $\K_{\Theta _1}$ is contained isometrically
in the deBranges-Rovnyak space $\HH(\Theta _2)$.  By Section \ref{Herglotzspace}, the reproducing kernel for any Herglotz space $\mathscr{L} (\Theta )$ on $\C \setminus \T$ can be expressed as
\begin{align*}
K_w ^{\Theta  } (z) & = \sqrt{2} (I-\Theta (z) ) ^{-1} \left(  \frac{I - \Theta (z) \Theta (w) ^* }{1 - z \overline{w} }  \right) \sqrt{2} (I - \Theta (w) ^* ) ^{-1}\\
& = V(z) k_w ^\Theta (z) V(w) ^*,
\end{align*}
where $k_w ^\Theta (z)$ is the reproducing kernel for the deBranges-Rovnyak space $\HH (\Theta)$. It follows that multiplication by $V (z) := \sqrt{2} (1-\Theta _2 (z) ) ^{-1}$ is an isometry from $\HH(\Theta _2)$ into the Herglotz space $\mathscr{L} (\Theta _2 )$. Hence $V : \K _{\Theta _1} \subset \HH (\Theta _2 ) \rightarrow \mathscr{L} (\Theta _2 )$, the operator of multiplication by $V(z)$, is an isometry of $\K _{\Theta _1} $ into $\mathscr{L} (\Theta _2 )$. Recall that the canonical partial isometry which acts as multiplication by $z$ on the largest possible domain in $\mathscr{L} (\Theta _2 )$ is denoted by $\ZZZ _{\Theta _2} $ (see Section \ref{Herglotzspace}), and the corresponding isometric linear transformation is $\widehat{\ZZZ} _{\Theta _2}$. By the definition of the domain of $\dom{\widehat{M} _{\Theta _1}}$, $$ \dom{\widehat{M} _{\Theta _1} } = \ker{ M_{\Theta _1}  } ^\perp = \{ f \in \K _{\Theta _1} | \ zf(z) \in \K _{\Theta _1} \}. $$ It follows that $V \dom{\widehat{M} _{\Theta _1}} \subset \dom{\widehat{\ZZZ} _{\Theta _2} } $ and that $V \widehat{M} _{\Theta _1} V ^* \subset \widehat{\ZZZ} _{\Theta _2} $ so that
$ M_{\Theta _1}  \unitary \ZZZ _{\Theta _2} $. Since $\ZZZ _{\Theta _1}  \cong M _{\Theta _1 } $, this also shows that $\ZZZ _{\Theta _1}  \unitary \ZZZ _{\Theta _2} $ whenever $\Theta _1$ is inner, $\Theta _2$ is contractive and $\Theta _1 \leqslant \Theta _2$.

Now suppose that $\Phi := \Theta \Gamma$ where $\Phi, \Theta, \Gamma$ are all scalar-valued inner functions on $\D$.  Let
$$ \Lambda := \left( \begin{array}{cc} \Theta & 0 \\ 0 & \Gamma \end{array} \right). $$ Then $\Lambda$ is a $2\times 2$ matrix-valued inner function,
and note that $M _\Lambda '$ has indices $(2,2)$, and that there is a natural unitary map $W$ from $\K _\Lambda = \K _\Theta \oplus \K _\Gamma$ onto
$\K _\Phi = \K _\Theta \oplus \Theta \K _\Gamma$. Namely $$ W (f \oplus g ) := f + \Theta g, $$ so that if we view elements of $\K _\Lambda $ as column vectors then $W$ acts as multiplication by the $1 \times 2$
matrix function $$ W (z) = (1 , \Theta (z) ).$$ It follows that $M _\Lambda  \unitary M_\Phi $, where $M_\Lambda $ has indices $(2,2)$ and
$M _\Phi $ has indices $(1,1)$.
\end{Example}

\begin{Example} \label{divisor}
    Even more generally suppose that $\Theta , \Phi$ are arbitrary contractive analytic $M_n (\C )-$valued functions on $\D$ such that $\Theta$ divides $\Phi$, i.e., $\Phi = \Theta \Gamma$ for some other
contractive analytic $M_n (\C)$-valued function $\Gamma$ on $\D$. As in the previous example the reproducing kernel for the Herglotz space $\mathscr{L} (\Theta )$ on $\C \setminus \T$ is:
$$ K_w ^\Theta (z) = \frac{G_\Theta (z) + G_\Theta (w) ^*}{1- z\overline{w}}, $$ and using that $G_\Theta = (1 +\Theta ) (1 - \Theta ) ^{-1}$, this can be re-expressed as
$$ K_w ^\Theta (z) =  \sqrt{2} (I-\Theta (z) ) ^{-1} \left(  \frac{I - \Theta (z) \Theta (w) ^* }{1 - z \overline{w} }  \right) \sqrt{2} (I - \Theta (w) ^* ) ^{-1}. $$
Recall here from Section \ref{Herglotzspace} that $\Theta$ is extended to a matrix function on $\C \setminus \T$ using the definition $\Theta (z) \Theta (1 / \overline{z} ) ^* = I $.
Let $$ W (z) := (I - \Phi (z) ) ^{-1} (I-\Theta (z) ), $$ and observe that
$$ K_w ^\Phi (z) - W(z) K_w ^\Theta (z) W(w) ^*$$ is equal to
\ba  \sqrt{2} (I - \Phi (z) ) ^{-1} \Theta (z)  \left( \frac{ I - \Gamma (z) \Gamma (w) ^* }{1 - z \overline{w}}  \right) \Theta (w) ^* (I-\Phi (w) ^* )^{-1} \sqrt{2} \nonumber \\
 = (I - \Phi (z) ) ^{-1} \Theta (z) (I - \Gamma (z) ) K_w ^\Gamma (z) (I-\Gamma (w) ) ^* \Theta (w) ^* (I-\Phi (w) ^* )^{-1}, \nonumber \ea
where $K_w ^\Gamma (z)$ is the reproducing kernel for the Herglotz space $\mathscr{L} (\Gamma )$ on $\C \setminus \T$.  This shows that the difference $K_w ^\Phi (z) - W(z) K_w ^\Theta (z) W(w) ^*$ is a positive $M_n (\C )$-valued kernel function on $\C \setminus \T$, and so it follows from the general theory of reproducing kernel Hilbert spaces that $W(z)$ is a contractive multiplier of $\mathscr{L} (\Theta )$ into $\mathscr{L} (\Phi )$ \cite[Theorem 10.20]{Paulsen-rkhs}. Theorem \ref{multiplier} now implies that $\ZZZ _\Theta \quasi \ZZZ _\Phi$ whenever $\Theta$ divides $\Phi$.
\end{Example}

\begin{Example} \label{FShift}
Suppose that $\Theta$ is a scalar inner function and $a \in \D$. A theorem of Crofoot \cite{MR1313454} says that the operator
$$U: \K_{\Theta} \to \K_{\Theta_a}, \quad U f = \frac{\sqrt{1 - |a|^2}}{1 - \overline{a} \Theta} f$$ is a unitary operator from $\K_{\Theta}$ onto $\K_{\Theta_{a}}$, where
$$\Theta_{a} = \frac{\Theta - a}{1 - \overline{a} \Theta}.$$ Thus $M_{\Theta} \cong M_{\Theta_a}$ and so certainly $M_{\Theta} \unitary M_{\Theta_a}$.
\end{Example}

\begin{Example}
Continuing the previous example, now suppose that $\Phi$ is a scalar inner function such that $\Theta_a$ divides $\Phi$.  Then
 one can see (by composing the unitary operators from the previous two examples) that $M_{\Theta} \unitary M_{\Phi}$.
\end{Example}

\begin{Example}
For a scalar inner function $\Theta$, let $\sigma$ be the unique finite positive measure on $\T$ satisfying
$$\frac{1 - |\Theta(z)|^2}{|1 - \Theta(z)|^2} = \int_{\T} \frac{1 - |z|^2}{|\zeta - z|^2} d\sigma(\zeta).$$
Such a measure $\sigma$ is one of the {\em Clark measures corresponding to $\Theta$}. From Clark theory \cite{Sarason} we know that
$$\K_{\Theta} = (1 - \Theta) \mathscr{C}_{\sigma} L^2(\sigma),$$
where
$$\mathscr{C}_{\sigma}: L^2(\sigma) \to \mathscr{O}(\D), \quad (\mathscr{C}_{\sigma} f)(z) = \int_{\T} \frac{f(\zeta)}{1 - \overline{\zeta} z} \,d \sigma(\zeta),$$
 is the Cauchy transform operator, and
$$\|(1 - \Theta) \mathscr{C}_{\sigma} f\| = \|f\|_{L^2(\sigma)}.$$
It is also known that
$E := \{\zeta \in \T: \lim_{r \to 1^{-}} \Theta(r \zeta) = 1\}$ is a carrier for $\sigma$. Let $F \subset E$ be such that $\mu = \sigma|_{F}$ is not the zero measure. Standard Clark theory says that $\mu$ is the Clark measure for some inner function $\Phi$, meaning that
$$\frac{1 - |\Phi(z)|^2}{|1 - \Phi(z)|^2} = \int_{\T} \frac{1 - |z|^2}{|\zeta - z|^2} d \mu(\zeta), \qquad z \in \D.$$

Note that we have $L^2(\mu) \subset L^2(\sigma)$ (understanding this inclusion by extending the functions in $L^2(\mu)$ to be zero on $\T \setminus F$). Furthermore, observe that
\begin{align*}
\K_{\Theta}  = (1 - \Theta) \mathscr{C}_{\sigma} L^2(\sigma) \supset (1 - \Theta) \mathscr{C}_{\sigma} L^2(\mu)
 = \frac{1 - \Theta}{1 - \Phi} (1 - \Phi) \mathscr{C}_{\sigma} L^2(\mu) = \frac{1 - \Theta}{1 - \Phi} \K_{\Phi}.
\end{align*}
Thus $(1 - \Theta)(1 - \Phi)^{-1}$ is a multiplier from $\mathcal{K}_{\Phi}$ to $\mathcal{K}_{\Theta}$. Furthermore,
if $F \in \mathcal{K}_{\Phi}$, then $F = (1 - \Phi) \mathscr{C}_{\sigma} f$, where $f \in L^2(\mu)$ (and considered to also be an element of $L^2(\sigma)$ by defining it to be zero on $\T \setminus F$). Finally,
\begin{align*}
\left\|\frac{1 - \Theta}{1 - \Phi} F\right\|^{2}  = \|(1 - \Theta) \mathscr{C}_{\sigma} f\|^{2}
 = \|f\|^{2}_{L^2(\sigma)}
 = \int |f|^2 d \sigma
 = \int |f|^2 d \mu
 = \|F\|^{2}.
\end{align*}
The last equality says that $(1 - \Theta)(1 - \Phi)^{-1}$ is an isometric multiplier from $\mathcal{K}_{\Phi}$ to $\mathcal{K}_{\Theta}$.

This example is significant since it provides us with an example of two (scalar) inner functions $\Phi$ and $\Theta$ such that $M_{\Phi} \unitary M_{\Theta}$, but so that $\Phi$, the Liv\v{s}ic function for $M_{\Phi}$ does not divide $\Theta$, the Liv\v{s}ic function for $M_{\Theta}$. Indeed, let
$$\Theta(z) = \exp\left(\frac{1 + z}{1 - z}\right)$$ be an atomic inner function. One can show that
\begin{equation}\label{eq:zTT1}
\{\zeta \in \T: \Theta = 1\} = \left\{\frac{2 n \pi - i}{2 n \pi + i}: n \in \Z\right\},
\end{equation}
which is a discrete set of points in $\T$ accumulating only at $\zeta = 1$. Let $F$ be a finite subset of \eqref{eq:zTT1} and construct the inner function $\Phi$ as above. A little thought shows that $\Phi$ is a finite Blaschke product. From the discussion above,
$$G = \frac{1 - \Theta}{1 - \Phi}$$
is an isometric multiplier from $\K_{\Phi}$ into $\K_{\Theta}$.  From this we get that $M_{\Phi} \unitary M_{\Theta}$. But $\Phi$ is a finite Blaschke product and $\Theta$ is a inner function without zeros in $\D$. Thus $\Phi$ does not divide $\Theta$.

If one wanted an example in terms of compressed shifts ($S_u \unitary S_v$ but the inner function $u$ does not divide the inner function $v$) one would need to have $u(0) = v(0) = 0$ which can be accomplished as follows: Let
$$v = z \Theta, \quad u = \frac{\Phi - a}{1 - \overline{a} \Phi},$$
where $a = \Phi(0)$. This makes $u(0) = 0$. If $F$ is the isometric multiplier from $\K_u$ onto $\K_{\Phi}$ (via Crofoot) and $G = (1 - \Theta)(1- \Phi)^{-1}$ then, using the fact that $K_{\Phi} \subset \K_v$, we see that $F G$ is an isometric multiplier from $\K_u$ to $\K_v$ and so $S_{u} \unitary S_v$. However, $u$ is a finite Blaschke product and cannot possibly divide $v$.
\end{Example}

\begin{Example}
Recall the operators $A = S^{*} \oplus S$ on $H^2 \oplus H^2$ and $B = A|_{H^{2} \oplus H^{2}_{0}}$. Notice that the operator $W := I \oplus S$ is an isometry from $\H := H^2 \oplus H^2$ onto $W \H$. Moreover,
\begin{align*}
B  = W A W^{*}
 = S^{*} \oplus S S S^{*}
 = (S^{*} \oplus S) (I \oplus S S^{*})
 = A (I \oplus S S^{*}).
\end{align*}
Notice that $I \oplus S S^{*}$ is the orthogonal projection of $\H$ onto $W \H$. Thus
$$W A W^{*} = A|_{W \H}$$ and so, since $W \H$ is a proper invariant subspace of $A$, it follows that $A$ is unitarily equivalent to a restriction of itself to a proper invariant subspace. One can check
$B \unitary A$ and the associated multiplier from the abstract model space $\HH_{B}$ to $\HH_{A}$ is
$$R(z) = \begin{cases} z &\mbox{if } |z| < 1 ,\\
1 & \mbox{if } |z| > 1. \end{cases}$$
\end{Example}

\begin{Proposition}
    The $\sim _q$ equivalence class, $[A ] _q$, of the partial isometry $A := S ^* \oplus S$ on $H^2 \oplus H^2$ is the unique maximal element of $\VV _1 / \sim _q$ with respect to the partial order $\quasi$. Moreover it
is larger than every other element of $\VV _1 / \sim _q$ with respect to $\quasi$.
\end{Proposition}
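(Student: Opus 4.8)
The plan is to use the multiplier characterization of $\quasi$ together with the special structure of the model space of $A = S^* \oplus S$. Recall from the earlier computation that the Liv\v{s}ic characteristic function of $A$ is $w_A = 0$, so the canonical model space is the Herglotz space $\mathscr{L}(0)$; equivalently, in the concrete model of Example \ref{SsPS}, the space $\HH_A$ \emph{decouples} as an orthogonal direct sum of $H^2$ on $\D$ and a reflected copy of $H^2$ on $\D_e$, with reproducing kernel $\tfrac{1}{1 - z\overline{w}}$ on each diagonal block and vanishing cross-kernel between $\D$ and $\D_e$. Since ``larger than every other element'' means precisely that $[B]_q \quasi [A]_q$ for every $B \in \VV_1$, it suffices to prove $B \quasi A$ for all $B \in \VV_1$; a greatest element is automatically the unique maximal element, which yields the first sentence. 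By Theorem \ref{multiplier}, the task reduces to producing, for each $B$, a multiplier from $\HH_B$ into $\HH_A$.

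The key observation is that $\C \setminus \T$ is disconnected, so a multiplier may be prescribed \emph{independently} on $\D$ and on $\D_e$. I would fix a model $\Gamma$ for $B$ normalized so that $j_0, j_\infty$ are surjective isometries and invoke the kernel formula \eqref{absrk}, which on the diagonal block $\D \times \D$ reads $k^{\Gamma}_w(z) = \alpha(z)\,\tfrac{1 - b(z)\overline{b(w)}}{1 - z\overline{w}}\,\overline{\alpha(w)}$, where $b = w_B$ and $\alpha(z) := j_z^* j_0$ is invertible for $z \in \D$ (here $\alpha, \delta$ denote the factors written $A(z), B(z)$ in \eqref{absrk}, renamed to avoid collision with the operators). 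This identifies the restriction space $\{F|_{\D} : F \in \HH_B\}$ with $\alpha \cdot \HH(b)$: a vector orthogonal to the closed span of $\{k^{\Gamma}_w\}_{w \in \D}$ restricts to $0$ on $\D$, so $F|_{\D}$ depends only on that projection and lies isometrically in the reproducing kernel space with kernel $\alpha(z)k^b_w(z)\overline{\alpha(w)}$, namely $\alpha \cdot \HH(b)$. The analogous computation on $\D_e \times \D_e$, using that $\delta(z) := j_z^* j_\infty$ is invertible there, identifies $\{F|_{\D_e} : F \in \HH_B\}$ with $\delta$ times a reflected de Branges--Rovnyak space.

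With these identifications in hand, I would define the piecewise-analytic multiplier $R$ by $R = 1/\alpha$ on $\D$ and $R = 1/\delta$ on $\D_e$; both are analytic and nowhere vanishing on their respective components. The contractive containment $\HH(b) \subset H^2$ then gives $(RF)|_{\D} = F|_{\D}/\alpha \in \HH(b) \subset H^2 = \HH_A|_{\D}$, and the reflected version gives $(RF)|_{\D_e} \in \HH_A|_{\D_e}$. Because $\HH_A$ is the decoupled direct sum of these two pieces, membership may be checked componentwise, so $RF \in \HH_A$ with $\|RF\|^2_{\HH_A} \leqslant 2\|F\|^2_{\HH_B}$ (boundedness also follows from the closed graph theorem exactly as in the proof of Theorem \ref{multiplier}). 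Since $R$ is nowhere zero, $M_R$ is injective. Thus $R$ is a multiplier from $\HH_B$ into $\HH_A$, and Theorem \ref{multiplier} yields $B \quasi A$, completing the argument.

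I expect the main obstacle to be the rigorous identification of the restriction spaces $\{F|_{\D}\}$ and $\{F|_{\D_e}\}$ via the projection argument above, together with the verification that $\alpha$ and $\delta$ are genuinely non-vanishing on $\D$ and $\D_e$ respectively, so that $1/\alpha$ and $1/\delta$ are admissible analytic factors. Once these are secured, the contractive inclusion $\HH(b) \subset H^2$ is the analytic engine that lands both pieces in the decoupled target $\HH_A$, and the disconnectedness of $\C \setminus \T$ is what frees us to use the two distinct local factors $1/\alpha$ and $1/\delta$ in a single multiplier.
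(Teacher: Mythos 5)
Your argument is correct, but it takes a genuinely different route from the paper's. The paper deduces the proposition as an immediate corollary of Example \ref{divisor}: there it is shown that if $\Theta$ divides $\Phi$ then $\ZZZ_\Theta \quasi \ZZZ_\Phi$ (via the contractive multiplier $(I-\Phi)^{-1}(I-\Theta)$ between the Herglotz spaces), and since $w_A = 0$ is divisible by \emph{every} contractive analytic $b$ (as $b\cdot 0 = 0$), every $V \in \VV_1$ satisfies $V \quasi A$; the maximality and uniqueness statements then follow by exactly the formal order-theoretic argument you give. You instead build the multiplier by hand: you exploit the decoupling of $\HH_A$ (equivalently $\mathscr{L}(0)$) into two independent Hardy spaces over $\D$ and $\D_e$, identify the restrictions of $\HH_B$ to the two components with $\alpha\,\HH(b)$ and its reflected analogue via \eqref{absrk}, and use the contractive containment $\HH(b) \subset H^2$ together with the disconnectedness of $\C \setminus \T$ to glue $1/\alpha$ and $1/\delta$ into a single bounded injective multiplier. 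Both arguments ultimately run on the same engine --- domination of positive kernels --- but yours is self-contained modulo \eqref{absrk} and makes the \emph{reason} for maximality transparent: the model space of $S^* \oplus S$ is an unobstructed pair of full Hardy spaces into which every other model space restricts contractively once the invertible factors $\alpha$, $\delta$ are removed. The paper's version is shorter given that Example \ref{divisor} is already in place and, as the Remark following the proposition notes, transfers essentially verbatim to $\VV_n$. The two points you flag as needing care are both covered by material already in the paper: the invertibility of $A(z)$ on $\D$ and of $B(z)$ on $\D_e$ is asserted in the proof of the Herglotz-space theorem, and the identification of the restriction spaces with the RKHS of the restricted kernel (with contractive restriction map) is standard reproducing-kernel theory. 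So there is no gap.
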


\begin{proof}
    This follows straightforwardly from Example \ref{divisor}.  By Example \ref{divisor}, if $\Theta, \Phi$ are contractive analytic functions on $\D$ such that $\Theta $ divides $\Phi$, then
$\ZZZ _\Theta \quasi \ZZZ _\Phi$. The characteristic function of $A$ is $w_A = 0 $, and so any contractive analytic function $b$ divides $w_A$: $w_A = b \cdot 0 = 0 $. It follows that if $V$ is any completely non-unitary
partial isometry with indices $(1,1)$, that its characteristic function $w_V$ divides $w_A$. Hence $V \simeq \ZZZ _{w_V} \quasi \ZZZ _{w_A} \simeq A$, and $V \quasi A$, so that $[V ] _q \quasi [A ] _q $, where $[\cdot ] _q$
denotes $\sim _q$ equivalence class, for any $V \in \VV _1$. It follows that $[A ] _q $ is maximal since if $[ A ] _q \quasi [V] _q $ for some $V \in \VV _1$ then also $[V] _q \quasi [A ] _q$ so that $ [V ] _q = [A ] _q $
since $\quasi$ is a partial order on $\VV _1 / \sim _q$. $[A] _q $ is clearly the unique maximal element since if $[V] _q$ is another maximal element then $[V] _q \quasi [A ] _q$ which implies $[V] _q = [A] _q $ by maximality.
\end{proof}

\begin{Remark}
    Similarly one can show that for any $n \in \N$, the $\sim _q$ equivalence class of $(S^*)^n \oplus S^n$, or equivalently $\left( \bigoplus _{k=1} ^n S^* \right)  \oplus \left( \bigoplus _{k=1} ^n S \right)$ is the unique
maximal element of $\VV _n / \sim _q$ with respect to the partial order $\quasi$.

	By Examples \ref{scalar} and \ref{inner}, if $\Theta, \Phi$ are contractive analytic $M_n (\C )-$valued functions on $\D$ with $\Theta$ inner, and $\Theta$ divides $\Phi$ then
$\ZZZ _\Theta \unitary \ZZZ _\Phi$. It follows as in the proof of the above proposition that the $\sim$ equivalence class $[ n\cdot A ]$ of $n \cdot A  := (S^*) ^n \oplus S^n$ is greater
than that of $V$ with respect to the partial order $\unitary$ on $\VV _n / \sim$ for any $V \in \VV _n$ for which the characteristic function $w_V$ is inner.
\end{Remark}


For $\Theta$ inner, let $\widehat{M} _{\Theta}$ be the multiplication operator on $\K_{\Theta}$ and let $\widehat{\ZZZ} _{\Theta} := \widehat{\ZZZ} _{\widehat{M} _{\Theta}}$ be the abstract model realization of $\widehat{M} _{\Theta}$. Also let $M_{\Theta}$ and $\ZZZ_{\Theta}$ be the partial isometric extensions of $\widehat{M} _{\Theta}$ and $\widehat{\ZZZ} _{\Theta}$. We know that $M_{\Theta}$ and $\ZZZ_{\Theta}$
have the same Liv\v{s}ic characteristic function and thus they are unitarily equivalent.

Furthermore, by Section \ref{POmult}, for two inner functions $\Theta$ and $\Phi$ we have that $M_{\Theta} \unitary M_{\Phi}$ if and only if there is an isometric multiplier from $\K_{\Theta}$ to $\K_{\Phi}$. Thus we see that $\ZZZ_{\Theta} \unitary \ZZZ_{\Phi}$ (which is equivalent to the fact that there is an isometric multiplier from $\HH_{M_{\Theta}}$ to $\HH_{M_{\Phi}}$) if and only there is an isometric multiplier from $\K_{\Theta}$ to $\K_{\Phi}$. This relates the isometric multiplier problem in the abstract setting to the one explored by Crofoot \cite{MR1313454}.

\begin{Example}
    Consider the partial isometries $M_B $, which act as multiplication by $z$ on their initial spaces in a model space $\K _B$ where
$B$ is a finite Blaschke product.  This example will show three things.
First we will show that $M _1  := M_{B_1} \quasi  M_2 := M_{B_2}  $ if and only if the degree of $B_2$ (number of zeroes) is greater than that of $B_1$, demonstrating that the partial order $\quasi$ is somewhat trivial when restricted to such partial isometries. Next we provide an example of $M_1  \unitary M_2  $ for finite Blaschke products $B_1, B_2$ even
though $B_1$ does not divide $B_2$. Finally we will show that there exist $B_1 , B_2$ so that the degree of $B_1$ is less than that of $B_2$ but $M_1 $ is not less than $M_2 $ with
respect to $\unitary$.  This will show that the two partial orders $\unitary$ and $\quasi$ are different.

Let $B_1, B_2$ be finite Blaschke products of degree $n \leqslant m$ and zero sets $\{ z_1, ... , z_n \}$ and $\{ w_1 , ... , w_m \}$ respectively. Then
$$ \K _{B_1} = \left\{ \left. \frac{p(z)}{(1 - \overline{z_1} z )\cdots(1 - \overline{z_n} z) } \ \right|  \ p \in \C [z]; \ \mbox{deg($p$)}\leqslant n-1 \right\}, $$ and similarly for $\K _{B_2}$. Since $n \leqslant m$,
the function $$ R(z) := \frac{(1 - \overline{z_1} z) \cdots (1-\overline{z_n} z) }{(1- \overline{w_1} z ) \cdots (1- \overline{w_m} z ) }, $$ is analytic, bounded on $\D$, and is a multiplier from $\K _{B_1}$ into $\K _{B_2}$. By Theorem \ref{multiplier}, and the discussion above, we have that $M_1  \quasi M_2 $.

Let
$$B_1(z) = z^2, \quad B_2(z) = z \frac{z - a}{1 - \overline{a} z}, \quad a \not = 0.$$ Note that
$$\K_{B_1} = \{d_0 + d_1 z: d_0, d_1 \in \mathbb{C}\},$$
$$\K_{B_2} = \{\frac{c_0 + c_1 z}{1 - \overline{a} z}: c_0, c_1 \in \mathbb{C}\}.$$
Thus, as just seen above, if
$$\phi = \frac{1}{1 - \overline{a} z},$$
we clearly have $\phi \K_{B_1} \subset \K_{B_2}$. In fact $\phi \K_{B_1} = \K_{B_2}$ as a bonus. Hence there is a multiplier from $\K_{B_1}$ to $\K_{B_2}$. However, there is no isometric multiplier from $\K_{B_1}$ to $\K_{B_2}$ and thus $M_{1} \quasi M_2$ but $M_1 \not \unitary M_2$. To see this, observe that since  $\mathbb{C} \subset \K_{B_1}$ we see that any multiplier $\phi$ from $\K_{B_1}$ to $\K_{B_2}$ satisfies $\phi \in \K_{B_2}$. Thus
$$\phi = \frac{c_0 + c_1 z}{1 - \overline{a} z}.$$
Notice that $c_1 = 0$ since otherwise $z \phi \not \in \K_{B_2}$. Thus $\phi$ takes the form
$$\phi = \frac{c_0}{1 - \overline{a} z}.$$
If $\phi$ is an isometric multiplier then $\phi$ must satisfy the identities
$$\|\phi 1\| = \|1\| = 1, \quad \langle \phi z, \phi 1\rangle = \langle z, 1 \rangle = 0.$$
The first identity says that
$$1 = \int |\phi|^2 dm = |c_0|^2 \int \frac{1}{|1 - \overline{a} z|^2} dm = |c_0|^2 \frac{1}{1 - |a|^2}$$
and so
$$\phi = \zeta \frac{\sqrt{1 - |a|^2}}{1 - \overline{a} z}$$
for some unimodular constant $\zeta$.
The second identity says that
$$0 = \int |\phi|^2 z dm = \int \frac{1 - |a|^2}{|1 - \overline{a} z|^2} z dm.$$
Notice how the above integral is the Poisson integral of the function $z$ (which is certainly harmonic on the disk) and so it evaluates to $a$.
Thus $a = 0$ which yields a contradiction. Thus there there is a multiplier from $\K_{B_1}$ to $\K_{B_2}$ but no isometric multiplier.
\end{Example}

\section{Equivalence classes}

We have defined two equivalence classes $\sim$ and $\sim_q$ on $\VV_{n}$ by declaring $A \sim B$ if $A \unitary B$ and $B \unitary A$ (respectively $A \sim_q B$ if $A \quasi B$ and $B \quasi A$). Can we precisely identify these equivalence classes? In some cases we can.

\begin{Theorem}
Suppose $A, B \in \VV_1$ with inner Livsic functions. Then $A \sim B$ if and only if $A$ is unitarily equivalent to $B$. \label{eclass}
\end{Theorem}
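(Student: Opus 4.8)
The plan is to prove the nontrivial implication $A \sim B \Rightarrow A \cong B$ (the converse is immediate: a unitary equivalence $U$ between $A$ and $B$ witnesses simultaneously $A \unitary B$ and $B \unitary A$, so $A \sim B$). First I would reduce the whole problem to the classical model spaces. Since $A, B \in \VV_1$ have inner Liv\v{s}ic functions, set $\Theta := w_A$ and $\Psi := w_B$; these are scalar inner functions and, being Liv\v{s}ic functions, automatically satisfy $\Theta(0) = \Psi(0) = 0$. By Theorem \ref{LivsicThm} together with Example \ref{poiipoipoi} we have $A \cong M_\Theta$ and $B \cong M_\Psi$, and by the discussion in Section \ref{POmult} preceding this theorem the relation $M_\Theta \unitary M_\Psi$ is equivalent to the existence of an isometric multiplier from $\K_\Theta$ to $\K_\Psi$. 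Thus the hypothesis $A \sim B$ supplies isometric multipliers $\varphi \colon \K_\Theta \to \K_\Psi$ and $\psi \colon \K_\Psi \to \K_\Theta$, each of which, being a multiplier, intertwines multiplication by the independent variable.

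The heart of the argument is the following lemma: \emph{an isometric multiplier $h$ of $\K_\Theta$ into itself, for inner $\Theta$ with $\Theta(0)=0$, must be a unimodular constant.} To prove it I would use that $1 \in \K_\Theta$ (because $\Theta(0)=0$), so $h = h \cdot 1 \in \K_\Theta$, and that $\Theta/z = C_\Theta k^\Theta_0 \in \K_\Theta$ with $\Theta/z$ itself inner. Hence $h\,\Theta/z \in \K_\Theta \perp \Theta H^2$, and expanding $\langle h\,\Theta/z, \Theta g\rangle$ while using $|\Theta/z| = 1$ a.e.\ on $\T$ collapses the orthogonality to $\langle h, z g\rangle = 0$ for every $g \in H^2$. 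This forces $h \perp z H^2$, so $h$ is constant, and the isometric condition $\|h f\| = \|f\|$ then forces $|h| = 1$.

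Finally I would apply the lemma to the self-multipliers $\psi\varphi$ of $\K_\Theta$ and $\varphi\psi$ of $\K_\Psi$. Each is a composition of isometric multipliers, hence an isometric multiplier of the respective model space into itself, so by the lemma both $M_\psi M_\varphi$ and $M_\varphi M_\psi$ are unimodular scalar multiples of the identity. In particular $M_\varphi M_\psi$ is invertible, so $M_\varphi$ is onto $\K_\Psi$; being isometric it is also injective, hence $M_\varphi$ is a unitary multiplier from $\K_\Theta$ onto $\K_\Psi$. Since $M_\varphi$ also intertwines multiplication by $z$, it implements a unitary equivalence $\widehat{M}_\Theta \cong \widehat{M}_\Psi$ and therefore $M_\Theta \cong M_\Psi$ after passing to the partial isometric extensions. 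Chaining the equivalences yields $A \cong M_\Theta \cong M_\Psi \cong B$.

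I expect the main obstacle to be the lemma on isometric self-multipliers: the delicate points are carrying out the orthogonality computation cleanly and, more importantly, confirming that $\varphi$, $\psi$, and their products are genuine bounded multiplication operators (via the closed graph argument already used in the proof of Theorem \ref{multiplier}) so that the composition trick is legitimate. Once the lemma is in place, the surjectivity bootstrapping and the transfer back through the Liv\v{s}ic correspondence are routine bookkeeping.
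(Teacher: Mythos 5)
Your proof is correct and follows essentially the same route as the paper's: reduce to isometric multipliers between the model spaces $\K_{w_A}$ and $\K_{w_B}$, observe that the composed self-multipliers must be unimodular constants, and bootstrap from invertibility of the composition to surjectivity of each factor, hence to unitary equivalence of $\widehat{M}_{w_A}$ and $\widehat{M}_{w_B}$. The only difference is that where the paper simply cites Crofoot's theorem for the key fact that an isometric multiplier of a model space into itself is a unimodular constant, you supply a correct self-contained argument using $1 = k^\Theta_0$ and $\Theta/z = C_\Theta k^\Theta_0$ in $\K_\Theta$.
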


\begin{proof}
If $A \unitary B$, then there is an isometric multiplier $m_{A}$ from $\K_{\Theta_A}$ to $\K_{\Theta_B}$. Likewise if $B \unitary A$, then there is an isometric multiplier $m_{B}$ from $\K_{\Theta_B}$ to $\K_{\Theta_A}$. The product $m := m_{A} m_{B}$ is a multiplier from $\K_{\Theta_A}$ to itself. By a theorem of Crofoot, $m$ must be a constant function with unimodular constant. Furthermore, $m \K_{\Theta_A} = \K_{\Theta_A}$.

We now claim that $m_A \K_{\Theta_{A}} = \K_{\Theta_B}$. Let $g \in \K_{\Theta_{B}}$. Then $m_{A} g = f \in \K_{\Theta_{B}}$ and so $m g = m_B m_{A} g = m_{B} f \in \K_{\Theta_{A}}$. But then $g = m^{-1} m_{B} f \in \K_{\Theta_{A}}$ and $m_{A} g = f$.

But then the isometric operators $\widehat{\ZZZ} _{A}$ and $\widehat{\ZZZ} _{B}$ are unitarily equivalent via
$$X: \K_{\Theta_{A}} \to \K_{\Theta_{B}}, \quad X f = m_A f.$$
 Since $A$ and $\ZZZ_{A}$ are unitarily equivalent and since $B$ and $\ZZZ_{B}$ are unitarily equivalent, we see that $A$ and $B$ are unitarily equivalent.

 The converse is obvious.
\end{proof}

Is turns out that this result can be extended beyond $n = 1$ by applying the theory of \cite{Martin-ext}, but the proof is much more involved and we will not include it here.

\begin{Theorem}
Suppose $A, B \in \VV_n$ with inner Livsic functions. Then $A \sim B$ if and only if $A$ is unitarily equivalent to $B$. \label{eclass2}
\end{Theorem}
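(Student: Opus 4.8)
The plan is to reduce everything to the existence of isometric multipliers between vector-valued model spaces and then to establish a matrix-valued analogue of the rigidity used in Theorem \ref{eclass}. Since $A$ and $B$ have inner Livšic functions $\Theta_A$ and $\Theta_B$ (now $M_n(\C)$-valued inner functions), the vector-valued model space realization, together with the Herglotz-space normalization of Section \ref{Herglotzspace}, lets me take $A \cong M_{\Theta_A}$ acting on $\K_{\Theta_A} = (\Theta_A H^2_{\C^n})^\perp$ and $B \cong M_{\Theta_B}$ on $\K_{\Theta_B}$, with $\ZZZ_A \cong M_{\Theta_A}$ and $\ZZZ_B \cong M_{\Theta_B}$. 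By Theorem \ref{isomult}, the hypothesis $A \sim B$ then furnishes two isometric multipliers, $\Phi \colon \K_{\Theta_A} \to \K_{\Theta_B}$ coming from $A \unitary B$ and $\Psi \colon \K_{\Theta_B} \to \K_{\Theta_A}$ coming from $B \unitary A$, each intertwining multiplication by $z$.

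Following the $n = 1$ argument, I would form the composition $M_{\Psi} M_{\Phi} = M_{\Psi\Phi}$, which is an isometric multiplier of $\K_{\Theta_A}$ into itself. The entire proof hinges on showing that every such isometric self-multiplier is in fact \emph{onto}. Granting this, surjectivity propagates as follows: since $\operatorname{Ran}(M_{\Psi\Phi}) \subseteq \operatorname{Ran}(M_\Psi) \subseteq \K_{\Theta_A}$ and $M_{\Psi\Phi}$ is onto, $M_\Psi$ is an isometry onto $\K_{\Theta_A}$, hence a unitary multiplier; then $M_\Phi = M_\Psi^{-1} M_{\Psi\Phi}$ is a product of surjective isometries and is therefore itself a unitary multiplier from $\K_{\Theta_A}$ onto $\K_{\Theta_B}$. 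A unitary multiplier intertwining multiplication by $z$ carries $\widehat{M}_{\Theta_A}$ to $\widehat{M}_{\Theta_B}$, and their zero extensions $\ZZZ_A$ to $\ZZZ_B$; unwinding the unitary equivalences $A \cong \ZZZ_A$ and $B \cong \ZZZ_B$ then yields $A \cong B$. The converse is immediate, since unitary equivalence gives isometric multipliers in both directions.

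The main obstacle is precisely the surjectivity of an isometric self-multiplier of $\K_{\Theta_A}$, i.e. the matrix-valued replacement for Crofoot's theorem. In the scalar case this was trivial: Crofoot's theorem \cite{MR1313454} forces an isometric multiplier of $\K_\Theta$ into itself to be a unimodular constant, which is visibly invertible. In the $M_n(\C)$-valued setting the group of isometric self-multipliers is genuinely larger---it contains the constant unitaries $W$ with $W \Theta_A H^2_{\C^n} = \Theta_A H^2_{\C^n}$ together with their analytic (``generalized Crofoot'') deformations---so one cannot expect the multiplier to be constant, and the naive rigidity argument breaks down. One could try to salvage things with a Schröder--Bernstein argument for the two isometric embeddings, but without rigidity this fails, so some structural input is unavoidable. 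This is where the theory of \cite{Martin-ext} enters: it provides a description of the contractive and isometric multipliers between vector-valued model spaces from which the key fact---that an isometric multiplier of $\K_\Theta$ into itself is automatically surjective---can be extracted. Establishing this description, and checking that it applies to the multipliers produced by Theorem \ref{isomult} after the Herglotz normalization, is the substantive content absent from the scalar proof.

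A secondary technical point requiring care is the normalization in the reduction step: the Livšic function satisfies $w_A(0) = 0$, whereas a general inner $\Theta_A$ need not, so identifying $\HH_A$ with $\K_{\Theta_A}$ needs a Frostman-shift adjustment $M_{\Theta_A} \cong M_{(\Theta_A)_a}$. In the scalar case this was Example \ref{FShift}; in the matrix case the corresponding intertwining is again supplied by the vector-valued Crofoot theory of \cite{Martin-ext}, so the same reference resolves both the normalization and the crucial surjectivity, after which the argument parallels that of Theorem \ref{eclass}.
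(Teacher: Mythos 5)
The paper does not actually supply a proof of Theorem \ref{eclass2}: immediately before stating it, the authors write that the result ``can be extended beyond $n=1$ by applying the theory of \cite{Martin-ext}, but the proof is much more involved and we will not include it here.'' Your outline is therefore consistent with the paper's intended route --- reduce via Theorem \ref{isomult} to a pair of isometric multipliers between vector-valued model spaces, compose them to get an isometric self-multiplier of $\K_{\Theta_A}$, and invoke a matrix-valued rigidity statement from \cite{Martin-ext} to conclude surjectivity, after which the bookkeeping ($M_\Psi$ onto because the composition is onto, hence $M_\Phi = M_\Psi^{-1}M_{\Psi\Phi}$ is a surjective isometric multiplier intertwining multiplication by $z$) is the same as in the $n=1$ case. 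You also correctly identify why the scalar argument does not transfer verbatim: for $n\geqslant 2$ an isometric self-multiplier of $\K_\Theta$ need not be a unimodular constant, so Crofoot's theorem cannot be quoted as is.

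That said, as a standalone proof your proposal has a genuine gap, and it is exactly the one you flag yourself: the claim that every isometric multiplier of a vector-valued model space $\K_{\Theta}$ into itself is automatically onto is asserted to be ``extractable'' from \cite{Martin-ext} but is neither stated precisely nor derived. This is not a routine verification --- it is the entire content of the theorem beyond the scalar case, and everything else in your argument is conditional on it. The secondary normalization issue you raise (replacing $\Theta_A$ by a Frostman shift so that the Liv\v{s}ic function vanishes at $0$, via a generalized Crofoot transform) is likewise deferred to the same reference. So the verdict is: correct strategy, matching the paper's declared approach, but the decisive lemma is cited rather than proved --- which is also all the paper itself does.
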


\begin{Theorem}
Suppose $A, B \in \VV_1$ with inner Livsic functions. Then $A \sim_q B$ if and only if $A|_{\ker{A}^{\perp}}$ is similar to $B|_{\ker{B}^{\perp}}$.
\end{Theorem}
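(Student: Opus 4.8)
The plan is to push everything down to the level of the classical model spaces. First I would record that the Liv\v sic function of any operator in $\VV_1$ vanishes at the origin: this is immediate from \eqref{Liv-Def}, where the leading factor $z$ forces $w_A(0)=0$. Writing $\Theta := w_A$ and $\Phi := w_B$ for the two inner Liv\v sic functions, we thus have $\Theta(0)=\Phi(0)=0$, so $1\in\K_\Theta$ and $1\in\K_\Phi$. Because the characteristic functions are inner, the abstract model spaces may be realized as the honest model spaces $\K_\Theta$ and $\K_\Phi$ (exactly as in Example \ref{inner} and the proof of Theorem \ref{eclass}), under which $A|_{\ker{A}^{\perp}}\cong\widehat{M}_{\Theta}$ and $B|_{\ker{B}^{\perp}}\cong\widehat{M}_{\Phi}$, the operators of multiplication by the independent variable on their natural domains. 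Under this identification, the assertion that $A|_{\ker{A}^{\perp}}$ is \emph{similar} to $B|_{\ker{B}^{\perp}}$ means precisely that there is a bounded invertible $X:\K_\Theta\to\K_\Phi$ with $X\dom{\widehat{M}_{\Theta}}=\dom{\widehat{M}_{\Phi}}$ intertwining the two multiplications. By the same computation that appears in the proof of Theorem \ref{multiplier} (intertwiners of multiplication by $z$ are forced to be multiplication operators), any such $X$ equals $M_m$ for an analytic multiplier $m$ with $m\K_\Theta=\K_\Phi$; its inverse intertwines the multiplications in the reverse direction, hence is $M_{1/m}$, so $1/m$ is a multiplier from $\K_\Phi$ onto $\K_\Theta$.

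The easy direction is ($\Leftarrow$). Given an invertible multiplier $m$ as above, $M_m$ witnesses $A\quasi B$ and $M_{1/m}$ witnesses $B\quasi A$ by Theorem \ref{multiplier}; hence $A\sim_q B$. For ($\Rightarrow$), suppose $A\sim_q B$. Applying Theorem \ref{multiplier} in both directions produces injective bounded multipliers $m:\K_\Theta\to\K_\Phi$ and $n:\K_\Phi\to\K_\Theta$. The composition $nm$ is then an injective bounded self-multiplier of $\K_\Theta$, and $mn$ an injective bounded self-multiplier of $\K_\Phi$. The entire argument now hinges on the following claim, which I regard as the main obstacle: \emph{if $\Theta$ is inner with $\Theta(0)=0$, then the only analytic self-multipliers of $\K_\Theta$ are constants.}

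Granting this claim, $nm$ and $mn$ are constants, and they are nonzero by injectivity, say $nm=cI$ on $\K_\Theta$ and $mn=c'I$ on $\K_\Phi$; the identity $n(mn)=(nm)n$ forces $c=c'$. Thus $m$ has both a left inverse and a right inverse equal to $\tfrac1c\,n$, so $m:\K_\Theta\to\K_\Phi$ is a bounded invertible multiplier with $m^{-1}=M_{n/c}$. By the first paragraph this invertible multiplier is exactly the similarity of $A|_{\ker{A}^{\perp}}$ and $B|_{\ker{B}^{\perp}}$, completing the equivalence.

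It remains to prove the claim. Here I would exploit the conjugation $C_\Theta f=\Theta\overline{\zeta f}$ on $\K_\Theta$ from Example \ref{12384u234}, a conjugate-linear isometric bijection of $\K_\Theta$ onto itself. If $\mu$ is a self-multiplier, so $\mu\K_\Theta\subseteq\K_\Theta$, then for $f\in\K_\Theta$ one computes $C_\Theta(\mu f)=\Theta\overline{\zeta\mu f}=\overline{\mu}\,\bigl(\Theta\overline{\zeta f}\bigr)=\overline{\mu}\,C_\Theta f$. Since $\mu f\in\K_\Theta$ and $C_\Theta$ preserves $\K_\Theta$, we get $\overline{\mu}\,C_\Theta f\in\K_\Theta$; letting $f$ range over $\K_\Theta$, so that $C_\Theta f$ ranges over all of $\K_\Theta$, this yields $\overline{\mu}\,\K_\Theta\subseteq\K_\Theta$ as well. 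Using $\Theta(0)=0$ we have $1\in\K_\Theta$, so $\mu=\mu\cdot 1\in\K_\Theta\subseteq H^2$ and $\overline{\mu}=\overline{\mu}\cdot 1\in\K_\Theta\subseteq H^2$. A boundary function that is simultaneously analytic and coanalytic is constant, so $\mu$ is constant, which proves the claim. The delicate points to get right are the realization of the abstract model spaces as $\K_\Theta,\K_\Phi$ (so that the scalar-multiplier calculus of Theorem \ref{multiplier} applies verbatim) and the normalization $\Theta(0)=\Phi(0)=0$, which is what makes $1$ available in both spaces and drives the coanalyticity argument.
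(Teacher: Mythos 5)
Your proposal is correct and follows essentially the same route as the paper: obtain bounded multipliers in both directions via Theorem \ref{multiplier}, compose them to get a self-multiplier of $\K_{\Theta}$, conclude it is a nonzero constant, and hence that the original multiplier is invertible and implements the similarity. The only real difference is that where the paper cites Crofoot for the fact that self-multipliers of a model space are constants, you prove it directly with the conjugation $C_{\Theta}$ and the normalization $\Theta(0)=0$ --- a correct and pleasantly self-contained substitute.
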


\begin{proof}
Essentially the same argument as above shows that $\widehat{\ZZZ} _{A}$ is similar to $\widehat{\ZZZ} _{B}$ via the invertible multiplier $m_A$, i.e., $Y: \K_{\Theta_{A}} \to \K_{\Theta_{B}}$, $Y f = m_A f$. Moreover, $Y \dom{\widehat{\ZZZ} _{A}} = \dom{\widehat{\ZZZ} _{B}}$.

If $U_{A}: \H_{A} \to \K_{\Theta_{A}}$ (here $\H_{A}$ is
the Hilbert space on which $A$ acts) is the unitary operator which induces the unitary equivalence of $A$ and $\ZZZ_{A}$
and $U_{B}: \H_{B} \to \K_{\Theta_B}$
is the unitary inducing the unitary equivalence of $B$ and $\ZZZ_{B}$, one notes that by the way in which these operators were constructed, we have
$$U_{A} \ker{A}^{\perp} = \dom{\widehat{\ZZZ} _{A}}, \quad U_{B} \ker{B}^{\perp} = \dom{\widehat{\ZZZ} _{B}}.$$

One can verify that the operator $L = U_{B}^{*} Y U_{A}: \H_{A} \to \H_{B}$ satisfies
$$L \ker{A}^{\perp} = \ker{B}^{\perp}, \qquad L A|_{\ker{A}^{\perp}} = B L|_{\ker{A}^{\perp}}.$$
This shows that $A|_{\ker{A}^{\perp}}$ is similar to $B|_{\ker{B}^{\perp}}$.

As in the previous proof, the converse is obvious.
\end{proof}

\begin{Example}
Let $\{\mathbf{e}_1, \ldots, \mathbf{e}_n\}$ be the standard orthonormal basis for $\C^n$ and let $\{\mathbf{u}_1, \ldots, \mathbf{u}_n\}$ be any orthonormal basis for $\C^n$. By Proposition \ref{canonicalformPI} the matrices
$$V_{1} = [\mathbf{e}_2|\mathbf{e}_3|\cdots|\mathbf{e}_n|\mathbf{0}], \quad V_{2} = [\mathbf{u}_2|\mathbf{u}_3|\cdots|\mathbf{u}_n|\mathbf{0}]$$
define partial isometries on $\C^n$. Note that $V_1$ is the matrix representation of the compressed shift $S_{\Theta}$ on $\K_{\Theta}$, where $\Theta = z^n$.

From Example \ref{poiipoipoi}, we see that the Liv\v{s}ic characteristic function for $V_1$ is $\Theta$ while the Liv\v{s}ic characteristic function for $V_2$ is the finite Blaschke product $\Psi$ whose zeros are $0$ along with the non-zero eigenvalues of $V_2$ (Example \ref{sdkfhjadskfjh}). So unless $\Theta = \xi \Psi$, for some $\xi \in \T$, $V_1$ is not unitarily equivalent to $V_2$ (Theorem \ref{LivsicThm}). However, we can see that $V_1 \sim_q V_2$ in the following way.

Observe from \eqref{weoiruwoeiru} that
$$\ker{V_1}^{\perp} = \bigvee\{\mathbf{e}_1, \ldots, \mathbf{e}_{n - 1}\}, \quad \ran{V_1}  = \bigvee \{\mathbf{e}_2, \ldots, \mathbf{e}_n\}.$$
Furthermore,
$V_1 \mathbf{e}_j = \mathbf{e}_{j + 1}, 1 \leqslant j \leqslant n - 1$.
This means that if $\mathscr{B}_1$ is the ordered basis $\{\mathbf{e}_1, \ldots, \mathbf{e}_{n - 1}\}$ for $\ker{V_1}^{\perp}$
and $\mathscr{B}_2$ is the ordered basis $\{\mathbf{e}_2, \ldots, \mathbf{e}_n\}$ for $\ran{V_1}$, then
the matrix representation of $V_{1}|_{\ker{V_1}^{\perp}}$ with respect to the pair $(\mathscr{B}_1, \mathscr{B}_2)$ is
$$[V_{1}|_{\ker{V_1}^{\perp}}]_{(\mathscr{B}_1, \mathscr{B}_2)} = I_{n - 1}.$$
In a similar way,
$$\ker{V_2}^{\perp} = \bigvee\{\mathbf{e}_1, \ldots, \mathbf{e}_{n - 1}\}, \quad \ran{V_2} = \bigvee\{\mathbf{u}_1, \ldots, \mathbf{u}_{n - 1}\}.$$
Moreover, $V_{2} \mathbf{e}_j = \mathbf{u}_j, 1 \leqslant j \leqslant n - 1$.
This means that if $\mathscr{C}_1$ is the ordered basis $\{\mathbf{e}_1, \ldots, \mathbf{e}_{n - 1}\}$ for $\ker{V_2}^{\perp}$
and $\mathscr{C}_2$ is the ordered basis $\{\mathbf{u}_1, \ldots, \mathbf{u}_{n - 1}\}$ for $\ran{V_2}$, then
the matrix representation of $V_{2}|_{\ker{V_2}^{\perp}}$ with respect to the pair $(\mathscr{C}_1, \mathscr{C}_2)$ is
$$[V_{2}|_{\ker{V_2}^{\perp}}]_{(\mathscr{C}_1, \mathscr{C}_2)} = I_{n - 1}.$$
Since we get the $(n - 1) \times (n - 1)$ identity matrix in both cases, we see, from basic linear algebra, that
$V_{1}|_{\ker{V_1}^{\perp}}$ is indeed similar to $V_{2}|_{\ker{V_2}^{\perp}}$.

\end{Example}

\bibliography{PartialOrder}

\end{document}